\newenvironment{abstract}{}{}
\DeclareMathAlphabet{\pazocal}{OMS}{zplm}{m}{n}
\newcommand{\bigslant}[2]{{\raisebox{.2em}{$#1$}\left/\raisebox{-.2em}{$#2$}\right.}}
\theoremstyle{plain}
\newtheorem{theorem}{Theorem}[section]
\newtheorem{lemma}[theorem]{Lemma}
\newtheorem{proposition}[theorem]{Proposition}
\newtheorem{corollary}[theorem]{Corollary}
\theoremstyle{definition}
\newtheorem{definition}[theorem]{Definition}
\theoremstyle{remark}
\title{Universal Limit Theorem for Spectra
\\of\\
iterated Inclusion-uniform Subdivisions}
\author{
  Julian M\"arte \\
  Department of Mathematics and Computer Science\\
  Philipps University Marburg\\
  35037 Marburg, Germany\\
  \texttt{mail@maertej.com}
}
\begin{document}
\maketitle

\begin{abstract}
    The main object of this work is the top-dimensional Laplacian operator of a simplicial complex $K$.
    We study its spectral limiting behavior under a given non-trivial subdivision procedure $\text{div}$. It will be shown that in case $\text{div}$ satisfies a property we call inclusion-uniformity its spectrum converges to a universal limiting distribution only depending on the dimension of $K$. This class of subdivisions contains important special cases such as the edgewise subdivision $\text{esd}_r$ for $r\geq 2$ and dimension $d = 2$ or the barycentric subdivision $\text{sd}$.
    This parallels a result of Brenti and Welker showing that the roots of $f$-polynomials of iterated barycentric subdivisions converge to a universal set of roots only depending on the dimension of $K$.
    
    Furthermore we determine the family of universal limiting functions for the particular subdivision where the top dimensional faces are replaced by a cone over their boundary. We will show that this choice of $\text{div}$ is the natural generalization of graph subdivision in the spectral sense. These limits are obtained by explicit spectral decimation of the sequence of its dual graphs which is represented as a sequence of Schreier graphs on a rooted regular tree.
    
    Finally we will point out that a generic sequence of iterated subdivisions can be realized by a sequence of graphs as in spectral analysis on fractals. We will give a construction of a self-similar sequence of graphs which dualizes the iterated application of subdivision.
\end{abstract}

\keywords{Universal Limit Theorem \and Laplacian Spectrum \and Combinatorial Laplacian \and Spectral Analysis on Fractals \and Self-similar graph sequence}

\section{Introduction}
The spectra of $k$-Laplacians of $d$-dimensional simplicial complexes, $k \leq d$, encode a variety of combinatorial and topological properties of the respective complex; cf. \cite{horak} for an overview of Laplacian operators on simplicial complexes. The case of interest for us is when $k = d$, i.e. the top-dimensional Laplacian of a $d$-dimensional simplicial complex $K$ which is defined as
$$\mathcal{L}(K) := \mathcal{L}_d(K) := \partial_d^t \partial_d$$
for the simplicial boundary operator $\partial_d$ in dimension $d$.
We are interested in how the spectrum of $\mathcal{L}(K)$ behaves (in the limit) under iterated subdivisions of $K$. We restrict ourselves to a certain intuitive subclass of geometric subdivisions in the sense of Stanley, \cite{stanley_subdiv}, which are additionally required to subdivide each face in the same way and independent of orientation. We will call them \textit{inclusion-uniform}.
The explicit definition of this class will be given in Section \ref{sec:prel}. A lot of prominent examples of geometric subdivisions are inclusion-uniform; including the edgewise subdivision of a $2$-dimensional complex and barycentric subdivision in arbitrary dimension.
For an overview of current research on subdivisions and their algebraic aspects we refer the reader to \cite{athan} and the references therein.

We consider the spectrum of a positive-semidefinite self-adjoint operator $L: \mathbb{R}^N \rightarrow \mathbb{R}^N$ as the non-decreasing right-continuous bounded (and thus $L^1$) stair-case function on $[0,1]$ given as
$$\Lambda(L) := \sum_{j=1}^N \lambda_j(L)\boldsymbol{1}_{[(j-1)/N, j/N)},$$
where $0 \leq \lambda_1(L) \leq ... \leq \lambda_N(L)$ are the ordered eigenvalues of $L$ listed with multiplicities and $\boldsymbol{1}_A$ denotes the indicator function of the set $A$. This function can be considered as a shift of the quantile function of the \textit{normalized eigenvalue counting function}\footnote{This is sometimes called the integrated density of states or the spectral CDF.}.

\begin{theorem}[Universal Limit Theorem for inclusion-uniform subdivisions]\label{th:universality}
Let $d\geq 1$ be an integer and let $\text{\normalfont div}$ be a inclusion-uniform subdivision acting non-trivially on $d$-dimensional complexes.

Then there exists a function $\Lambda_d^{\text{\normalfont (div)}} \in L^1([0,1])$ such that for every $d$-dimensional complex $K$ it holds
$$\Lambda(\mathcal{L}(\text{\normalfont div}^n K)) \xrightarrow{n\rightarrow\infty}_{L^1} \Lambda_d^{\text{\normalfont (div)}}.$$
\end{theorem}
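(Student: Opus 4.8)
The plan is to translate the spectral problem for the top Laplacian into a statement about empirical spectral distributions of a sequence of uniformly bounded-degree graphs, and then to extract a universal limit via local weak (Benjamini--Schramm) convergence together with the moment method.

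First I would pass to the \emph{dual graph}. Since every subdivision in Stanley's sense is simplicial, each facet of $\mathrm{div}^n K$ is a $d$-simplex with exactly $d+1$ ridges, so the diagonal of $\mathcal L(\mathrm{div}^n K)=\partial_d^t\partial_d$ is constantly $d+1$; two distinct facets $\sigma,\tau$ share at most one ridge (sharing two would force all $d+1$ vertices to coincide), so the off-diagonal entry is $\pm1$ when they share a ridge and $0$ otherwise. Writing $G_n$ for the dual graph (vertices are facets, edges are shared ridges) and $A_n$ for its signed adjacency operator, we obtain
$$\mathcal L(\mathrm{div}^n K) = (d+1)\,I - A_n,$$
so the eigenvalues transform by $\lambda\mapsto(d+1)-\lambda$ and $\Lambda(\mathcal L(\mathrm{div}^n K))$ converges in $L^1$ if and only if the analogous staircase function of $A_n$ does. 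I would note at the outset that the spectrum of $A_n$ is independent of the orientations used to define $\partial_d$ (re-orienting a simplex conjugates $A_n$ by a diagonal $\pm1$ matrix), and, more usefully, that the product of edge-signs along any closed walk is gauge-invariant, since each visited vertex is entered and left an even number of times. This lets me work with signed closed-walk counts without fixing orientation conventions.

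Next I would establish a uniform degree bound. As $K$ is finite, there is a maximal number $M$ of facets of $K$ sharing a common ridge. A ridge of $\mathrm{div} K$ lies either interior to a $d$-face of $K$, where it separates exactly two subdivided facets, or on a ridge $\rho$ of $K$; in the latter case inclusion-uniformity forces $\rho$ to be subdivided the same way inside each of the (at most $M$) facets of $K$ containing it, so the small ridge is shared by at most $M$ subdivided facets. No $(d-1)$-face of $\mathrm{div} K$ can sit inside a face of $K$ of dimension below $d-1$. Hence the maximal ridge-multiplicity does not increase under $\mathrm{div}$, and by induction every $G_n$ has degree at most $(d+1)M$; consequently $\|A_n\|$, and therefore the spectrum of $\mathcal L(\mathrm{div}^n K)$, lies in a single compact interval $[a,b]$ independent of $n$, and all empirical spectral distributions are supported in $[a,b]$.

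The core of the argument, and the step I expect to be hardest, is the \textbf{local weak convergence} of the rooted graphs $(G_n,o_n)$, with $o_n$ a uniformly random facet, to a \emph{universal} random rooted graph depending only on $d$ and $\mathrm{div}$. Here I would exploit the self-similar structure of iterated subdivision: $\mathrm{div}^n K$ arises by subdividing each facet of $\mathrm{div}^{n-1}K$ by the same rule, so the finest-scale combinatorics around a facet lying deep in the interior of a single original $d$-face is prescribed by $\mathrm{div}$ alone, independently of $K$ and of which facet we sit at. Two things must be shown: (i) for each fixed radius $R$ the isomorphism type of the $R$-ball around a deep-interior facet stabilizes as $n$ grows, converging to a fixed pattern determined by the substitution rule $\mathrm{div}$; and (ii) the proportion of facets that are \emph{not} deep-interior — those within bounded dual-distance of the $(d-1)$-skeleton of $K$, where the non-universal gluing multiplicities live — tends to $0$, because the facet count of $\mathrm{div}^n K$ grows like $g_d^{\,n}$ (with $g_d\geq2$ the number of $d$-simplices $\mathrm{div}$ produces from one facet) whereas the number of facets near any fixed lower-dimensional face grows only like $g_{d-1}^{\,n}$ with $g_{d-1}<g_d$. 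Item (ii) forces the random root into the deep interior with probability tending to $1$, which is exactly what makes the limit independent of $K$; item (i) is the delicate point, since it requires formalizing how the substitution rule generates a well-defined limiting neighborhood, and this is precisely where the self-similar graph-sequence and Schreier-graph realization advertised in the abstract serve to make the local weak limit canonical.

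Finally I would conclude by the moment method. The $k$-th moment of the spectral distribution of $A_n$ is $\tfrac1N\operatorname{tr}(A_n^k)$, which equals the average over roots of the signed count of closed walks of length $k$; this is a local, gauge-invariant functional of a bounded-radius neighborhood, so the local weak convergence above forces every moment to converge to its expectation under the universal limit. Since all measures are supported in the fixed compact interval $[a,b]$ they are determined by their moments, and convergence of all moments yields weak convergence of the empirical spectral distributions to a universal measure $\mu_d^{(\mathrm{div})}$. Weak convergence together with common compact support is equivalent to convergence in the Wasserstein-$1$ metric, and the $W_1$ distance between two distributions is exactly the $L^1([0,1])$ distance between their quantile functions; applying the affine reflection $\lambda\mapsto(d+1)-\lambda$ then gives $\Lambda(\mathcal L(\mathrm{div}^n K))\to\Lambda_d^{(\mathrm{div})}$ in $L^1$, with a limit depending only on $d$ and $\mathrm{div}$, as claimed.
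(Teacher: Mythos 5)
Your reduction to the signed dual graph, the uniform degree and spectral bound, and the closing chain (gauge-invariant signed closed-walk counts, moment method on a common compact support, identification of $W_1$ with the $L^1$ distance of quantile functions) are all sound. The genuine gap is exactly at the point you yourself flag as ``the delicate point'': item (i), the existence of the local weak limit, is asserted but never proven, and it is not a technical refinement --- it is the mathematical content of the theorem. Your item (ii) only shows that a uniformly random facet avoids a neighborhood of the $(d-1)$-skeleton of the \emph{original} complex $K$, so at best it yields that $\Lambda(\mathcal L(\mathrm{div}^n K))$ is asymptotically $L^1$-close to $\Lambda(\mathcal L(\mathrm{div}^n \Delta_d))$ (this is the paper's Proposition \ref{prop:dominance}, dominance of local spectra, i.e.\ universality \emph{given} existence of a limit). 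It cannot produce convergence in $n$: every facet of $\mathrm{div}^n\Delta_d$ lies within bounded dual distance of ridges created at levels $n-1$, $n-2$, \dots, so the hierarchy never becomes locally trivial, the $R$-ball of a deep facet does not stabilize to a single ``fixed pattern'' but at best has a limiting \emph{distribution} of types, and proving that this distribution of $R$-ball types converges is essentially equivalent to the theorem for $K=\Delta_d$. Deferring this to the Schreier-graph machinery ``advertised in the abstract'' does not close the gap: in the paper that realization is carried out only for the particular subdivision $\mathrm{cd}$ (Section \ref{sec:cone}), and the general self-similar construction of Section \ref{sec:fractal} is never used to prove spectral convergence for an arbitrary inclusion-uniform $\mathrm{div}$.

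The paper closes exactly this hole by an elementary quantitative argument you could adopt: by inclusion-uniformity, $\mathrm{div}^n\Delta_d=\mathrm{div}^{n-1}(\mathrm{div}\,\Delta_d)$ is a gluing of $f_d(\mathrm{div})$ copies of $\mathrm{div}^{n-1}\Delta_d$ along $O\big(f_d(\mathrm{div})\,f_{d-1}(\mathrm{div})^{n-1}\big)$ ridge identifications; deleting these identifications perturbs the Laplacian by a matrix whose normalized $L^1$-norm is $O\big((f_{d-1}/f_d)^n\big)$, and the Wielandt--Hoffman bound (Corollary \ref{cor:wielandt}) converts this into $\|\Lambda_n-\Lambda_{n-1}\|_{L^1}\le c\,q^n$ with $q=f_{d-1}(\mathrm{div})/f_d(\mathrm{div})<1$, using that a disjoint union of copies of $\mathrm{div}^{n-1}\Delta_d$ has the same quantile function as a single copy. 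The geometric series makes $(\Lambda_n)$ Cauchy, and completeness of $L^1$ delivers the limit without ever constructing a limiting graph (Proposition \ref{prop:convergence}). Note finally that both your step (ii) and this Cauchy argument rest on the inequality $f_{d-1}(\mathrm{div})<f_d(\mathrm{div})$, which you assert parenthetically (``$g_{d-1}<g_d$'') but which is not obvious for an abstract inclusion-uniform subdivision; the paper needs a genuine geometric argument for it (Lemma \ref{lemm:ineqfvecs}), and your write-up would too.
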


We can associate to a complex $K$ a multitude of Laplacians. For $i\in\mathbb{N}$ we might define the $i$-up Laplacian
$\mathcal{L}_i^{\text{up}}(K) := \partial_i\partial_i^t$
and the $i$-down Laplacian
$\mathcal{L}_i^{\text{down}}(K) := \partial_i^t\partial_i$.
The $i$-dimensional Laplacian then is the sum of the $i$-up and $i$-down Laplacians
$$\mathcal{L}(K) := \mathcal{L}_i^{\text{up}}(K) + \mathcal{L}_i^{\text{down}}(K).$$
Note that in case $i = 0$ or $i = \text{dim}K$ only one of the operators is non-zero.

The existence of such universal limiting functions for $0$-up Laplacians of simplicial complexes has been studied in \cite{knill} for the particular case of $\text{div}$ being barycentric subdivision. Our main object is the $d$-down Laplacian which in general - i.e. for $d > 1$ - has no spectral relationship to the $0$-up Laplacian. However the $i$-up Laplacian has a strong spectral correlation with the $(i+1)$-down Laplacian where their spectra are identical including multiplicities except for the eigenvalue $\lambda = 0$. Thus in determining the spectral distribution of top-dimensional Laplacians we have a degree of freedom of whether to choose the $(d-1)$-up or $d$-down Laplacian to perform spectral analysis on; the choice of the $d$-down Laplacian will, however, prove to be more suitable as we won't have to compensate for changes in matrix size introduced by gluing (see Section \ref{sec:universality} for more details).

The sole dependence on $\dim K$ complements a result by Brenti and Welker, \cite{welker}, showing that the roots of $f$-polynomials of the sequence of iterated barycentric subdivisions of a complex converge to a universal set of roots only depending on the dimension of $K$. Effects of this kind can be attributed to the dominance of local features introduced by the repeated subdivision.

Having established the existence of a universal limiting function a natural question to ask is whether we can determine this function for given $d$ and a inclusion-uniform subdivision $\text{div}$. This question can be reduced to one on (signed) graph spectra when considering the $d$-Laplacian as the graph Laplacian of the $d$-dual graph of $K$ (as a signed graph). The subdivision operation then induces an operation on the dual graphs by replacing every vertex by a copy of a "fundamental graph" and joining them appropriately by edges. These joining operations in turn depend on the edges of the given graph. We thus seek to analyze the effect a graph operation induced by subdivision has on the spectrum.
A variety of such spectral effects of common graph operations is summarized in \cite{spectral_effects, cvetkovic_subdiv_graph}, with one particular example of a unary graph operation being the (barycentric) subdivision of a graph (regarded as a $1$-dimensional simplicial complex).

We say that a graph operation $S: G\mapsto S(G)$ admits \textit{"spectral decimation"} if there is a rational function $f_S$ such that the spectrum of $S(G)$ consists of the solutions $\mu\in\mathbb{R}$ of the equations
$$\lambda = f(\mu)$$
for $\lambda$ in the spectrum of $G$ (with eventual adjustment of multiplicities and up to some "small" exceptional set $\mathcal{E}$). Thus $S(G)$ only carries spectral information stemming from either $G$ or $S$ (up to $\mathcal{E}$). The notion of spectral decimation originates from fractal analysis, e.g. \cite{jordan}. In Section \ref{sec:fractal} we will describe how iterated subdivisions fit the framework of spectra of self-similar graph sequences.
Graph subdivision is one case for which a spectral decimation holds as long as the input graph is regular, \cite{spectral_effects}.

In order for spectral decimation to be applicable iteratively we need to assume the initial graph $G$ to be $2$-regular. Then $S(G)$ will again be $2$-regular. For $r$-regular graphs $G$, $r \geq 3$ $S(G)$ is not regular anymore. However as the limiting distribution does not depend on $G$ (as we will see in Theorem \ref{th:universality}) we can pick the initial setting $G$ at will - in particular we might choose it to be $2$-regular.

Note that the Laplacian of a $2$-regular graph can be written as
$$L(G) = 2\cdot I - A(G)$$
where $A(G)$ denotes the adjacency matrix of $G$ and $I$ is the identity matrix of proper size. Since $2$-regularity is preserved under subdivision so is the relation between Laplacian and adjacency matrix. As a consequence of this the sequences of spectra of Laplacians and adjacency matrices are related over an affine-linear transformation.
In this particular case we obtain that the eigenvalues of the adjacency matrix of $S(G)$ are given by the roots of the polynomial equation
$$f_A(\zeta) = \zeta^2 - 2 = \lambda$$
for $\lambda$ running over the set of eigenvalues of the adjacency matrix associated to the initial graph $G$, as shown in \cite{spectral_effects} for example. In case such a decimation holds we call $f_A$ the spectral decimation map.
Analogously the spectral decimation map for the Laplacian spectrum in the $2$-regular graph case is given by
$$f_L(\zeta) = \zeta(4 - \zeta)$$
which can be seen through substituting by the affine-linear transformation of spectra discussed above.

There are many subdivision procedures $\text{div}$ which coincide with $S$ on $1$-dimensional simplicial complexes. One natural question to ask is which of those generalizes $S$ in a spectral sense. In Section \ref{sec:cone} we will find a higher-dimensional analogue of the above decimation for the subdivision operation $\text{cd}$ shown in Figure \ref{fig:cd}. For a complex $K$ of dimension $d$ $\text{cd}K$ is obtained from the $(d-1)$-skeleton $K^{(d-1)}$ by adding the barycenter $v_\sigma$ of every facet $\sigma\in F_d(K)$ together with the faces $v_\sigma\cup\tau$ for $\tau < \sigma$.
As we will see from this concrete example the determination of an exact spectral decimation is much more involved in this case.

This work is structured as follows: Section \ref{sec:prel} gives an introduction to the main objects and frameworks used in the course of this paper. The Universal Limit Theorem, Theorem \ref{th:universality}, is proven in Section \ref{sec:universality}. The universal limit of the subdivision $\text{cd}$ is determined by Theorem \ref{th:conespectrum} in Section \ref{sec:cone}. Lastly in Section \ref{sec:fractal} we point out the strong relation the spectral theory for iterated subdivision has to fractal theory by giving a construction procedure of fractals dualizing subdivision of a complex. 

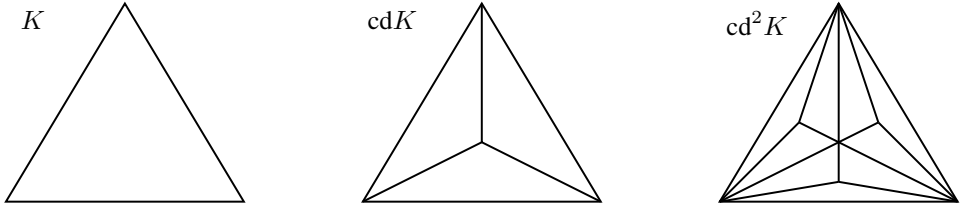
\begin{figure}
    \centering
    \tikzset{every picture/.style={line width=0.75pt}} 

\begin{tikzpicture}[x=0.75pt,y=0.75pt,yscale=-1,xscale=1]

\draw    (150,70) -- (210,170) -- (90,170) -- cycle ;
\draw    (330,70) -- (390,170) -- (270,170) -- cycle ;
\draw    (510,70) -- (570,170) -- (450,170) -- cycle ;
\draw    (330,70) -- (330,140) -- (390,170) ;
\draw    (330,140) -- (270,170) ;
\draw    (510,70) -- (510,140) -- (570,170) ;
\draw    (510,140) -- (450,170) ;
\draw    (450,170) -- (510,160) ;
\draw    (510,140) -- (510,160) ;
\draw    (510,160) -- (570,170) ;
\draw    (490,130) -- (450,170) ;
\draw    (510,140) -- (490,130) ;
\draw    (510,70) -- (490,130) ;
\draw    (530,130) -- (570,170) ;
\draw    (510,70) -- (530,130) ;
\draw    (510,140) -- (530,130) ;

\draw (96,72.4) node [anchor=north west][inner sep=0.75pt]    {$K$};
\draw (271,72.4) node [anchor=north west][inner sep=0.75pt]    {$\text{cd} K$};
\draw (452,72.4) node [anchor=north west][inner sep=0.75pt]    {$\text{cd}^{2} K$};

\end{tikzpicture}
    \caption{The first $3$ complexes of the sequence of iterated application of $\text{cd}$ for $d = 2$ for the initial complex $K = \Delta^{(2)}$ - the standard-$2$-simplex.}
    \label{fig:cd}
\end{figure}

\section{Preliminaries}\label{sec:prel}
\subsection*{Basics on simplicial complexes}
The following objects are defined in \cite{horak} (even though the notation might vary). A thorough introduction to simplicial topology and geometry can be found in \cite{munkres}.

A simplicial complex $K$ on a finite vertex set $V$ is a collection of subsets of $V$ downwards-closed under $\subset$, i.e. if $A\subset B \in K$ then also $A\in K$. We denote by $F_i(K)$ the collection of sets of $K$ of size $i+1$ and call those elements \textit{$i$-dimensional faces} of $K$. The dimension of $K$ is the maximum dimension of a face in $K$.

We call a simplicial complex $K$ oriented if for every face $\tau\in K$ we fix a linear ordering of the vertices of $\tau$. Two orientations of $K$ are said to be equivalent if for every $\tau\in K$ the orderings fixed for the vertices of $\tau$ are obtained from each other by applying an even permutation, thus partitioning orientations of $\tau$ in two equivalence classes. If the orientation fixed for $\tau$ is relevant we emphasize this by writing $[\tau]$ instead of $\tau$.
The orientation opposite to $[\tau]$ is denoted by $-[\tau]$.
We denote by $C_i(K)$ the $\mathbb{R}$-vector space over the basis elements $\{e_{[\tau]} ~|~ \tau\in F_i(K)\}$ and call $C_i(K)$ the chain groups of $K$ with coefficients in $\mathbb{R}$. The opposite orientations of elements of $F_i(K)$ are interpreted as elements of $C_i(K)$ by
$$e_{-[\tau]} = -e_{[\tau]}.$$

$C_\bullet(K)$ becomes a chain complex with the usual simplicial boundary operator,
$$\partial_i[v_0,...,v_i] := \sum_{j=0}^i (-1)^j e_{[v_0,...,\hat{v_j},...,v_i]}.$$
Further we equip $C_i(K)$ with the standard inner product and denote by $\partial_i^\ast$ the operator adjoint to $\partial_i$ with respect to the chosen inner products.

Now we are ready to define the Laplacian operators in different dimensions.
\begin{definition}
Let $K$ be an oriented simplicial complex and $i\in\mathbb{N}$ then we define
\begin{itemize}
    \item the $i$-up Laplacian to be
    $$\mathcal{L}_i^{\text{up}}(K) := \partial_{i+1}\partial_{i+1}^*,$$
    \item the $i$-down Laplacian to be
    $$\mathcal{L}_i^{\text{down}}(K) := \partial_i^*\partial_i$$
    and
    \item the $i$-Laplacian to be
    $$\mathcal{L}_i(K) := \mathcal{L}_i^{\text{down}}(K) + \mathcal{L}_i^{\text{up}}(K).$$
\end{itemize}
\end{definition}

Note that by definition for a $d$-dimensional complex $K$ it holds $\mathcal{L}_d^{\text{up}}(K) = 0$ and thus
$$\mathcal{L}_d(K) = \mathcal{L}_d^{\text{down}}(K).$$
We will describe to combinatorics decoded by $\mathcal{L}_d^{\text{down}}(K)$ in the following. 

In order to model higher-dimensional adjacencies in $K$ we will say $\tau, \tau'\in F_{i+1}(K)$ are \textit{$(i+1)$-down neighbors} if they share a common $i$-face, i.e. $\tau\cap\tau'\in F_i(K)$.
The $i$-dual graph $\Gamma^{(i)}(K)$ of a complex $K$ for us then is the graph on vertex set $F_i(K)$ with edge set $E$ modelling the $i$-down adjacency, i.e. $\{\tau, \tau'\}\in E$ iff $\tau\cap\tau'\in F_{i-1}(K)$.

A signed graph $G = (V,E,\sigma)$ is an undirected graph $G$ with a function $\sigma: E \rightarrow \{\pm 1\}$ signing each edge. The degree of a vertex in a signed graph is the degree of a vertex in the underlying undirected graph $G = (V,E)$. Order the vertices of $G$ arbitrarily and denote by $D(G)$ the diagonal matrix of degrees of vertices of $G$, $D(G)_{ii} = \deg (v_i)$, and $A(G)$ the signed adjacency matrix of $G$,
$$A(G)_{ij} = \begin{cases}
0 &, \{i,j\}\notin E\\
\sigma(\{i,j\}) &, \{i,j\}\in E
\end{cases}.$$
Note that the Laplacian of a simplicial complex then is a natural generalization of the graph Laplacian
$$\mathcal{L}(G) := D(G) + A(G)\footnote{We obtain the common Laplacian operator for the sign $\sigma \equiv -1$ in this definition.}$$
in the following sense:

By Proposition 3.3.3 of \cite{goldberg2002combinatorial} we have that for $K$ an oriented simplicial complex it holds that
$$\mathcal{L}_i^{\text{down}}(K) = \mathcal{L}(\Gamma^{(i)}(K), \sigma)$$
where the sign map $\sigma: E \rightarrow \{\pm 1\}$ is given by
$$\sigma(\{\tau, \tau'\}) := \delta_\tau(\tau\cap\tau')\cdot \delta_{\tau'}(\tau\cap\tau')$$
for $\delta_\tau: ~ F_{i-1}(\tau) \rightarrow \{\pm 1\}$ given as
$$\delta_\tau(\nu) := \langle \partial_i e_{[\tau]}, e_{[\nu]}\rangle,$$
i.e. the coefficient of $e_{[\nu]}$ in $\partial_i e_{[\tau]}$. This definition measures if the induced orientation of $[\tau]$ over $\partial_i$ coincides with the orientation $[\nu]$ fixed for $\nu$. Thus if the induced orientations of $[\tau]$ and $[\tau']$ on $\tau\cap\tau'$ are the same we obtain
$$A(\Gamma^{(i)}(K), \sigma)_{\tau,\tau'} = 1$$
and if they differ
$$A(\Gamma^{(i)}(K), \sigma)_{\tau, \tau'} = -1.$$
In case $\tau$ and $\tau'$ are not even $i$-down neighbors the adjacency operator is zero in this entry.

A special case where this point of view is particularly interesting is the case of \textit{orientable} complexes. We say a pure $d$-dimensional simplicial complex $K$ is orientable if there is an orientation of $K$ such that every pair of $d$-down neighboring faces $\{\tau, \tau'\}$ induces opposing orientations on $\tau\cap\tau'$, i.e. in the above notation
$$\sigma(\{\tau, \tau'\}) = -1.$$
Thus $\sigma \equiv -1$ and the $d$-dual graph is just an undirected graph with $\mathcal{L}_d(K)$ being its ordinary graph Laplacian.
As mentioned above in what follows we will consider the case $i = d = \dim K$ and will denote the top-dimensional Laplacian by $\mathcal{L}(K) := \mathcal{L}_d(K)$.

\subsection*{Asymptotic spectral analysis}
\begin{definition}
Let $L$ be a Hermitian $N\times N$ matrix. We call the $L^1$-function
$$\Lambda(L) = \sum_{j=1}^{N-1} \lambda_j(L)\boldsymbol{1}_{[\frac{j-1}{N}, \frac{j}{N})}$$
the \textit{shifted spectral quantile function of $L$}.
\end{definition}

Note that this notion originates from the fact that $\Lambda(L)$ is a shift of the quantile function of the spectral CDF
$$F_L(x) = \frac{1}{N}\#\{i\in [N] ~|~ \lambda_i(L) \leq x\}.$$
The quantile function of $F_L$ is given as
$$Q_L(p) = \sum_{j=1}^{N-1} \lambda_j(L) \boldsymbol{1}_{[j/N, (j+1)/N)} + \lambda_N(L)\boldsymbol{1}_{\{1\}}$$
and thus $\Lambda(L)$ is the shift
$$\Lambda(L)(p) = Q_L(\min(p + 1/N,1)).$$

For the rest of this work we will denote by $||\cdot||_1^{\text{norm}}$ the normalized $L^1$-norm of matrices, i.e. for $A\in\mathbb{C}^{N\times N}$
$$||A||_1^{\text{norm}} := \frac{||A||_1}{N},$$
for the common $L^1$ matrix-norm.

The following proposition is \cite[inequality (1.2)]{wielandt}; we refer the reader to the sources mentioned in the introduction therein.
\begin{proposition}[$1$-Wielandt-Hoffman inequality, 
\cite{wielandt}]
Let $L, E\in M_N(\mathbb{C})$ be Hermitian matrices. It holds that
$$\sum_{j=1}^N |\lambda_j(L + E) - \lambda_j(L)| \leq \sum_{j=1}^k \sigma_j(E) = ||E||_{S^1},$$
where $\sigma_j(E)$ denotes the $j$-th singular value of $E$ and $||\cdot||_{S^1}$ is the Schatten-$1$-norm.
\end{proposition}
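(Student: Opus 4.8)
The right-hand side deserves a first simplification: because $E$ is Hermitian its singular values are exactly the moduli of its eigenvalues, so $\|E\|_{S^1} = \sum_{j=1}^N \sigma_j(E) = \sum_{j=1}^N |\lambda_j(E)|$ is the nuclear norm of $E$. Writing $A = L+E$ and $B = L$, so that $A - B = E$, the assertion becomes a comparison between the $\ell^1$-distance of the (identically) sorted spectra of $A$ and $B$ and the nuclear norm of their difference. The natural framework for such a comparison is majorization, and the plan is to reduce the inequality to the Lidskii--Wielandt theorem.

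Concretely, list all eigenvalues in decreasing order and set $d = \bigl(\lambda_j^{\downarrow}(L+E) - \lambda_j^{\downarrow}(L)\bigr)_{j=1}^N$. The plan has two steps. First, I would establish the majorization $d \prec \lambda(E)$, i.e. that $d$ lies in the convex hull of the coordinate permutations of the eigenvalue vector of $E$. Second, since $t \mapsto |t|$ is convex, the Hardy--Littlewood--P\'olya theorem turns this majorization into $\sum_{j=1}^N |d_j| \le \sum_{j=1}^N |\lambda_j(E)| = \|E\|_{S^1}$, which is precisely the claim. This second step is routine; all the content sits in the majorization.

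To prove $d \prec \lambda(E)$ one checks the two defining conditions. Equality of the full sums is immediate from the trace, $\sum_j d_j = \operatorname{tr}(L+E) - \operatorname{tr}(L) = \operatorname{tr}(E) = \sum_j \lambda_j(E)$. The substance is the family of partial-sum bounds $\sum_{t=1}^k d_{[t]} \le \sum_{t=1}^k \lambda_t^{\downarrow}(E)$ for every $k$, where $d_{[1]} \ge d_{[2]} \ge \cdots$ is the decreasing rearrangement of $d$. I would derive these from a Ky Fan type variational argument built on the maximum principle $\sum_{t=1}^k \lambda_t^{\downarrow}(H) = \max\{\operatorname{tr}(PH) : P = P^{*} = P^{2},\ \operatorname{rank} P = k\}$: taking $S = \{i_1 < \cdots < i_k\}$ to be the indices of the $k$ largest entries of $d$ and splitting $\operatorname{tr}(P(L+E)) = \operatorname{tr}(PL) + \operatorname{tr}(PE)$ on a suitably chosen rank-$k$ projection should yield the additive Lidskii inequality $\sum_{t=1}^k\bigl(\lambda_{i_t}^{\downarrow}(L+E) - \lambda_{i_t}^{\downarrow}(L)\bigr) \le \sum_{t=1}^k \lambda_t^{\downarrow}(E)$, which is exactly the required bound. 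The main obstacle is this bookkeeping, namely reconciling the different extremal subspaces that realize the three sums for an \emph{arbitrary} index set $S$ rather than for the top $k$; the convexity argument and the trace identity are straightforward by comparison. As an alternative I would consider the differential route along $L(t) = L + tE$: analytic perturbation theory gives eigenvalue branches with $\tfrac{d}{dt}\tilde\lambda_j = \langle v_j, E v_j\rangle$, so that $\tilde\lambda_j(1) - \tilde\lambda_j(0) = \int_0^1 \langle v_j, E v_j\rangle\,dt$, and summing moduli while using that the diagonal of $E$ in any orthonormal basis is majorized by $\lambda(E)$ (Schur--Horn) together with the rearrangement fact that sorted-to-sorted pairing minimizes the $\ell^1$ matching recovers the bound; there the obstacle instead migrates to handling eigenvalue crossings rigorously.
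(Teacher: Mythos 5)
The paper itself offers no proof of this proposition: it is imported verbatim as inequality (1.2) of the cited reference, so your attempt can only be judged on its own merits. Your skeleton is the standard textbook route and is sound: for Hermitian $E$ the singular values are the moduli of the eigenvalues, so $\|E\|_{S^1}=\sum_j|\lambda_j(E)|$; the difference vector $d$ of identically sorted spectra is majorized by $\lambda(E)$ (Lidskii--Wielandt); and Karamata/HLP applied to the convex function $t\mapsto|t|$ finishes. If you quote Lidskii's theorem as known, the proof is complete. The genuine gap is in your plan to derive that majorization from Ky Fan's maximum principle. For an arbitrary index set $S=\{i_1<\cdots<i_k\}$ the splitting argument fails: taking $P$ to be the spectral projection of $L+E$ onto the eigenvalues indexed by $S$ gives $\operatorname{tr}(P(L+E))=\sum_t\lambda_{i_t}(L+E)$ and $\operatorname{tr}(PE)\le\sum_{t\le k}\lambda_t^{\downarrow}(E)$, but you would additionally need $\operatorname{tr}(PL)\le\sum_t\lambda_{i_t}(L)$, and this is false in general --- Ky Fan only bounds $\operatorname{tr}(PL)$ above by the top-$k$ sum of $L$'s eigenvalues, which strictly exceeds $\sum_t\lambda_{i_t}(L)$ when $S$ is not the top block, and nothing prevents $P$ from aligning with the top eigenvectors of $L$. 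Choosing $P$ adapted to $L$ instead breaks the other inequality. No single rank-$k$ projection can serve all three roles at once; this is exactly why Wielandt introduced his minimax principle over chains of nested subspaces, and that principle is the real content of Lidskii's theorem. Your fallback via analytic perturbation theory has the same status: handling crossings plus Schur--Horn plus the rearrangement step amounts to reproving Lidskii in disguise.

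For the $\ell^1$ inequality you actually need, there is a short elementary argument that bypasses Lidskii entirely and would close your gap. Write $E=E_+-E_-$ (Jordan decomposition) and set $C=L+E_+$. Since $C-(L+E)=E_-\succeq 0$ and $C-L=E_+\succeq 0$, Weyl's monotonicity theorem gives $\lambda_j(C)\ge\lambda_j(L+E)$ and $\lambda_j(C)\ge\lambda_j(L)$ for every $j$, hence
$$|\lambda_j(L+E)-\lambda_j(L)|\le\bigl(\lambda_j(C)-\lambda_j(L)\bigr)+\bigl(\lambda_j(C)-\lambda_j(L+E)\bigr).$$
Summing over $j$ collapses everything to traces:
$$\sum_{j=1}^N|\lambda_j(L+E)-\lambda_j(L)|\le\operatorname{tr}(E_+)+\operatorname{tr}(E_+-E)=\operatorname{tr}(E_+)+\operatorname{tr}(E_-)=\operatorname{tr}|E|=\|E\|_{S^1}.$$
Replacing your Lidskii step by this argument turns your outline into a complete, self-contained proof using only Weyl's inequality and linearity of the trace.
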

Together with the fact that $||\cdot||_{S^1} \leq ||\cdot||_1$\footnote{Which can easily be seen from the fact that the Schatten-$1$-norm is the nuclear norm for $2$-tensors as mentioned in \cite{friedland} and the references therein.} we obtain the following useful corollary.
\begin{corollary}\label{cor:wielandt}
Let $L, E\in M_N(\mathbb{C})$ be Hermitian matrices. It holds that
$$||\Lambda(L + E) - \Lambda(L)||_{L^1} \leq ||E||_1^{\text{\normalfont norm}},$$
where $||\cdot||_{L^1}$ denotes the $L^1([0,1])$-norm.
\end{corollary}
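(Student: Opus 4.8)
The goal is to bound $\|\Lambda(L+E) - \Lambda(L)\|_{L^1}$ by the normalized $L^1$-matrix-norm of $E$, starting from the $1$-Wielandt-Hoffman inequality just stated.

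The plan is to unwind the definition of $\Lambda$ and reduce the $L^1([0,1])$-norm to a sum over eigenvalue differences. Since both $\Lambda(L+E)$ and $\Lambda(L)$ are step functions that are constant on each interval $[(j-1)/N, j/N)$ of length $1/N$, their pointwise difference on that interval is exactly $\lambda_j(L+E) - \lambda_j(L)$ (both operators have the same size $N$, so the partition of $[0,1]$ is identical). Integrating the absolute value, I would compute
\begin{equation*}
\|\Lambda(L+E) - \Lambda(L)\|_{L^1} = \int_0^1 |\Lambda(L+E)(p) - \Lambda(L)(p)| \, dp = \sum_{j=1}^N \frac{1}{N}\,|\lambda_j(L+E) - \lambda_j(L)|.
\end{equation*}
This converts the analytic $L^1$-norm into the discrete sum appearing on the left-hand side of the Wielandt-Hoffman inequality, divided by $N$.

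Next I would apply the $1$-Wielandt-Hoffman inequality directly to this sum, yielding
\begin{equation*}
\sum_{j=1}^N |\lambda_j(L+E) - \lambda_j(L)| \leq \|E\|_{S^1}.
\end{equation*}
Dividing through by $N$ and invoking the footnoted fact that $\|\cdot\|_{S^1} \leq \|\cdot\|_1$, I obtain
\begin{equation*}
\|\Lambda(L+E) - \Lambda(L)\|_{L^1} = \frac{1}{N}\sum_{j=1}^N |\lambda_j(L+E) - \lambda_j(L)| \leq \frac{\|E\|_{S^1}}{N} \leq \frac{\|E\|_1}{N} = \|E\|_1^{\text{norm}},
\end{equation*}
which is precisely the claimed bound.

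I do not expect any serious obstacle here; the corollary is essentially a bookkeeping translation of Wielandt-Hoffman through the definition of $\Lambda$. The only point requiring mild care is the boundary/shift convention in the definition of $\Lambda$: the definition sums from $j=1$ to $N-1$, so I should confirm that the omitted top interval (or the precise shift described by $\Lambda(L)(p) = Q_L(\min(p+1/N,1))$) does not alter the integral, since a single point or a measure-zero discrepancy at the endpoint contributes nothing to the $L^1$-norm. Once that convention is pinned down so that the two step functions share the same breakpoints $j/N$, the interval-by-interval comparison is immediate and the chain of inequalities closes the proof.
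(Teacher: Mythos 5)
Your proof is correct and follows exactly the route the paper intends: the paper derives the corollary immediately from the $1$-Wielandt-Hoffman inequality combined with the bound $\|\cdot\|_{S^1} \leq \|\cdot\|_1$, with the translation of the $L^1([0,1])$-norm into $\frac{1}{N}\sum_j |\lambda_j(L+E)-\lambda_j(L)|$ left implicit, which is precisely the bookkeeping you carry out. Your closing remark on the $j=1,\dots,N-1$ versus $j=1,\dots,N$ convention is also the right observation: the Section 2 definition is evidently a typo for the introduction's definition (summing to $N$), and under either reading the chain of inequalities still closes.
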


We will use this inequality in the proof of Theorem \ref{th:universality} in a similar manner to how related statements are used for the use of \textit{approximating class of sequences} in GLT matrix theory, cf. \cite{glt}.

\subsubsection{Tools for explicit spectral analysis}\label{sssec:spectral}
In order to exactly compute certain determinants or inverses under low-rank perturbations in Section \ref{sec:cone} we will use the following two convenient results.

\begin{lemma}[Sherman-Morrison-Woodbury formula, \cite{sherman_morrison, woodbury}]\label{lemm:SMW}
Let $A\in \mathbb{R}^{n\times n}$, $U\in \mathbb{R}^{n\times m}$, $V\in\mathbb{R}^{m\times n}$. Assume $A$ and $I_m - VA^{-1}U$ are invertible. Then the inverse of $A - UV$ is given as
$$(A-UV)^{-1} = A^{-1} + A^{-1} U(I_m - VA^{-1}U)^{-1}VA^{-1}.$$
In particular for $m = 1$, $U = u\in\mathbb{R}^n, V = v\in \mathbb{R}^n$ we obtain the original Sherman-Morrison formula
$$(A-uv^t)^{-1} = A^{-1} + \frac{A^{-1}uv^t A^{-1}}{1 - v^t A^{-1} u}.$$
\end{lemma}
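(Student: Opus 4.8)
The plan is to prove the formula by direct verification, since the asserted identity is purely algebraic. Denote the claimed inverse by
$$B := A^{-1} + A^{-1} U (I_m - VA^{-1}U)^{-1} V A^{-1},$$
which is well-defined precisely because the hypotheses guarantee that both $A$ and $I_m - VA^{-1}U$ are invertible. I would then check that $(A - UV)B = I_n$. Because $A - UV$ is square, a one-sided inverse is automatically two-sided, so this single computation already establishes both that $A - UV$ is invertible and that $B$ is its inverse (one may verify $B(A-UV) = I_n$ symmetrically if desired).

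Writing $W := (I_m - VA^{-1}U)^{-1}$ for brevity, the verification splits into two terms. The first is immediate: $(A - UV)A^{-1} = I_n - UVA^{-1}$. For the correction term, the key step is the factorization
$$(A - UV)A^{-1}U = U - UVA^{-1}U = U(I_m - VA^{-1}U),$$
after which multiplying on the right by $W V A^{-1}$ collapses $(I_m - VA^{-1}U)W = I_m$ and leaves exactly $UVA^{-1}$. Adding the two contributions yields $(A-UV)B = I_n - UVA^{-1} + UVA^{-1} = I_n$, as required. The only bookkeeping to confirm is that every product is conformable, which is forced by the stated dimensions $A \in \mathbb{R}^{n\times n}$, $U \in \mathbb{R}^{n \times m}$, $V \in \mathbb{R}^{m \times n}$.

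There is essentially no obstacle in this argument: the one nontrivial move, factoring $U - UVA^{-1}U$ as $U(I_m - VA^{-1}U)$, is forced and immediate. A more conceptual alternative, which I would mention but not carry out, is to invert the block matrix $\left(\begin{smallmatrix} A & U \\ V & I_m \end{smallmatrix}\right)$ in two ways via its two Schur complements $A - UV$ and $I_m - VA^{-1}U$ and to compare the $(1,1)$-blocks of the results; this route explains transparently why exactly these two invertibility hypotheses appear, at the cost of more setup than the direct check.

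Finally, the Sherman-Morrison specialization follows by setting $m = 1$, $U = u$ and $V = v^t$ for $u, v \in \mathbb{R}^n$. Then $UV = uv^t$ and $I_m - VA^{-1}U = 1 - v^t A^{-1} u$ is a scalar, whose inverse is simply $\tfrac{1}{1 - v^t A^{-1} u}$, so the general formula reads off directly as the claimed rank-one update.
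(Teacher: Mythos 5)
Your verification is correct and complete: the factorization $(A-UV)A^{-1}U = U(I_m - VA^{-1}U)$ is exactly the right pivot, the cancellation against $(I_m - VA^{-1}U)^{-1}$ is legitimate, and invoking the fact that a one-sided inverse of a square matrix is two-sided closes the argument; the $m=1$ specialization is also handled correctly. Note that the paper itself offers no proof of this lemma — it is quoted as a known result with citations to the literature — so your self-contained direct check is, if anything, more than the paper provides, and it is the standard argument one would find in those references.
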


\begin{lemma}[Matrix Determinant Lemma, Theorem 18.1.1 of \cite{MDL}]\label{lemm:MDL}
Let $A\in\mathbb{R}^{n\times n}$, $B\in \mathbb{R}^{m\times m}$, $U\in\mathbb{R}^{n\times m}$, $V\in\mathbb{R}^{m\times n}$. It holds that
$$\det(A + UBV) = \det A \det B \det (B^{-1} + VA^{-1}U).$$
In the particular case of $m = 1$, $B = 1$ and vectors $U = u\in\mathbb{R}^m$, $V = v\in\mathbb{R}^m$ we obtain
$$\det(A + uv^t) = (1 + v^t A^{-1} u)\det A.$$
\end{lemma}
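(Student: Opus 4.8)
The plan is to derive the identity from a single bordered matrix whose determinant I compute in two different ways; this is the standard Schur-complement route and avoids any explicit cofactor expansion. Since the right-hand side already involves $A^{-1}$ and $B^{-1}$, I may assume throughout that both $A$ and $B$ are invertible, so that no limiting argument is needed.

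First I would introduce the block matrix
\[
M := \begin{pmatrix} A & U \\ -V & B^{-1} \end{pmatrix} \in \mathbb{R}^{(n+m)\times(n+m)},
\]
and recall the block-triangular factorization that drives the Schur-complement determinant formula: eliminating against the invertible top-left block $A$ gives
\[
M = \begin{pmatrix} I_n & 0 \\ -VA^{-1} & I_m \end{pmatrix}\begin{pmatrix} A & U \\ 0 & B^{-1} + VA^{-1}U \end{pmatrix},
\]
so that taking determinants yields $\det M = \det A \cdot \det(B^{-1} + VA^{-1}U)$. Symmetrically, eliminating against the invertible bottom-right block $B^{-1}$ produces $\det M = \det(B^{-1})\cdot \det(A + U B V)$. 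The sign convention $-V$ placed in the lower-left corner is exactly what makes both Schur complements come out with a $+$ sign.

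Equating the two evaluations of $\det M$ and using $\det(B^{-1}) = (\det B)^{-1}$ gives
\[
\det A \cdot \det(B^{-1} + VA^{-1}U) = (\det B)^{-1}\,\det(A + UBV),
\]
and clearing the denominator yields the claimed identity $\det(A+UBV) = \det A\,\det B\,\det(B^{-1}+VA^{-1}U)$. The particular case then follows immediately by setting $m=1$, $B=(1)$, $U=u$ and $V=v^t$, since $\det B = 1$ and $B^{-1}+VA^{-1}U = 1 + v^tA^{-1}u$.

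I expect no genuine obstacle here; the only points demanding care are bookkeeping rather than mathematics: choosing the off-diagonal sign in $M$ so that both complements acquire a plus sign, and keeping the invertibility hypotheses on $A$ and $B$ explicit (they are already implicit in the statement through the appearance of $A^{-1}$ and $B^{-1}$). If one wished to state the lemma without assuming $A$ invertible, the standard remedy would be to note that both sides are polynomials in the entries of $A$ and to invoke continuity on the dense set where $A$ is invertible; this is unnecessary for the form stated above.
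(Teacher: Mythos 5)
Your proof is correct, but note that the paper never proves this lemma at all: it is imported verbatim as Theorem 18.1.1 of \cite{MDL}, so there is no internal argument to compare against. Your bordered-matrix computation is the standard self-contained proof, and every step checks out: with
\[
M = \begin{pmatrix} A & U \\ -V & B^{-1} \end{pmatrix},
\]
both of your block-triangular factorizations multiply out to $M$, the sign choice $-V$ in the lower-left corner indeed makes the two Schur complements appear as $B^{-1}+VA^{-1}U$ and $A+UBV$ with plus signs, and equating $\det A\,\det(B^{-1}+VA^{-1}U)$ with $(\det B)^{-1}\det(A+UBV)$ gives the claim; the specialization $m=1$, $B=(1)$ is immediate. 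It is worth observing that the one-sided Schur complement identity you invoke twice is precisely the paper's Lemma \ref{lemm:SR} (Schur-Renormalization), which the paper also cites from the same reference \cite{MDL} without proof; so your argument effectively derives Lemma \ref{lemm:MDL} from Lemma \ref{lemm:SR}, replacing one black box by another but exhibiting the logical dependence between the two tools the paper uses side by side. Your closing remark is also accurate on both counts: invertibility of $A$ and $B$ is implicitly assumed by the statement itself through the appearance of $A^{-1}$ and $B^{-1}$, and if one wanted the identity with $\det B\,\det(B^{-1}+VA^{-1}U)$ rewritten as $\det(I_m + BVA^{-1}U)$ to hold for singular $A$, the polynomial-density argument you sketch is the standard remedy.
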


The following result will help us resolve block matrix determinants.

\begin{lemma}[Schur-Renormalization, Theorem 13.3.8. of \cite{MDL}]\label{lemm:SR}
Let $A\in \mathbb{R}^{n\times n}$, $B\in \mathbb{R}^{n\times m}$, $C\in\mathbb{R}^{m\times n}$, $D\in\mathbb{R}^{m\times m}$. Then it holds that
$$\det \begin{pmatrix}
A & B\\
C & D
\end{pmatrix} = \det \begin{pmatrix}
D & C\\
B & A
\end{pmatrix} = \det A \det (D - CA^{-1}B).$$
\end{lemma}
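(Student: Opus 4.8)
The plan is to establish the two equalities separately. For the second equality $\det\begin{pmatrix} A & B\\ C & D\end{pmatrix} = \det A\,\det(D - CA^{-1}B)$, I would use block Gaussian elimination (equivalently a block $LU$ factorization). Since the right-hand side features $A^{-1}$, I take $A$ to be invertible throughout. First I would verify the factorization
\[
\begin{pmatrix} A & B\\ C & D\end{pmatrix}
= \begin{pmatrix} I_n & 0\\ CA^{-1} & I_m\end{pmatrix}
\begin{pmatrix} A & B\\ 0 & D - CA^{-1}B\end{pmatrix}
\]
by direct block multiplication. Taking determinants, the first factor is block lower-triangular with identity diagonal blocks, so its determinant is $1$; the second factor is block upper-triangular, so its determinant is $\det A\,\det(D - CA^{-1}B)$. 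Multiplicativity of the determinant then yields the claimed formula.

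For the first equality $\det\begin{pmatrix} A & B\\ C & D\end{pmatrix} = \det\begin{pmatrix} D & C\\ B & A\end{pmatrix}$, I would argue that the swapped block matrix arises from the original by a simultaneous reordering of the row and column index blocks. Concretely, let $P$ be the permutation matrix that exchanges the first $n$ coordinates with the last $m$ coordinates as blocks. Then one checks that
\[
P\begin{pmatrix} A & B\\ C & D\end{pmatrix}P^{-1} = \begin{pmatrix} D & C\\ B & A\end{pmatrix},
\]
since conjugation by $P$ sends the $(i,j)$ block to the block in the swapped positions. As this is a similarity transformation (indeed $P^{-1} = P^t$ for a permutation matrix), it preserves the determinant, giving the first equality without any invertibility hypothesis.

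There is no serious obstacle here, as this is a classical identity; the only point that needs care is the role of invertibility. The expression $D - CA^{-1}B$ is only defined when $A$ is invertible, so this is implicitly assumed in the statement, and I would make this explicit at the outset. If instead one wanted a determinant identity valid for all $A$, one could either argue by continuity---perturbing $A$ to $A + \varepsilon I_n$, which is invertible for all but finitely many $\varepsilon$, and letting $\varepsilon \to 0$---or simply note that the permutation argument for the first equality already holds unconditionally, so only the Schur-complement step carries the invertibility requirement.
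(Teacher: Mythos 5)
Your proof is correct. Note that the paper itself gives no proof of this lemma: it is imported verbatim as a known result (Theorem 13.3.8 of the cited reference) and used as a black box in the spectral computations of Section 4, so there is no internal argument to compare against. Your derivation is the standard one and both steps check out: the block factorization
$$\begin{pmatrix} A & B\\ C & D\end{pmatrix}
= \begin{pmatrix} I_n & 0\\ CA^{-1} & I_m\end{pmatrix}
\begin{pmatrix} A & B\\ 0 & D - CA^{-1}B\end{pmatrix}$$
verifies by direct multiplication and yields the Schur-complement identity via multiplicativity of the determinant and the block-triangular determinant formula, while conjugation by the block-swap permutation matrix $P = \bigl(\begin{smallmatrix} 0 & I_m\\ I_n & 0\end{smallmatrix}\bigr)$ gives the first equality unconditionally. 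Your remark on invertibility is also the right thing to flag: the lemma as stated tacitly requires $A$ to be invertible (otherwise $D - CA^{-1}B$ is undefined), and in the paper's application the block playing the role of $A$ is the matrix $X$ whose inverse is computed explicitly, so invertibility holds for generic $(\mu,\lambda)$ and the identity between polynomials extends by continuity exactly as you describe.
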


\subsection*{Iterated subdivisions of simplicial complexes}
We will be using the notion of geometric subdivisions, cf. \cite{stanley_subdiv}, \cite[p. 83]{munkres}. To this end we assume every simplicial complex to be a geometric simplicial complex, i.e. be embedded in some euclidean space for the rest of this subsection. This is no obstruction on the simplicial complex as every abstract simplicial complex has a \textit{geometric realization}, cf. \cite[Theorem 3.1]{munkres}. We will thus use the notions of geometric and abstract complexes interchangably - assuming to have fixed some geometric realization of the initial complexes. We assume the standard-$d$-simplex to be realized as $\text{conv}(e_1,...,e_{d+1})\subset \mathbb{R}^{d+1}$ for the standard basis $\{e_1,...,e_{d+1}\}$.

Furthermore let $d$ be a fixed dimension.

\begin{definition}
A procedure $\text{div}$ associating to a $d$-dimensional geometric complex $K$ a geometric complex $\text{div}K$ is called a \textit{subdivision procedure} if the following conditions hold:
\begin{itemize}
    \item[(i)] Every simplex of $\text{div}K$ is contained in some simplex of $K$.
    \item[(ii)] Every simplex of $K$ is the union of finitely many simplices of $\text{div}K$.
\end{itemize}
\end{definition}
It is well-known that every subdivision $\text{div}K$ induces a map $s: \text{div} K \rightarrow K$ associating to a face $\sigma\in \text{div}K$ the smallest face $\tau\in K$ such that $\sigma$ is contained in $\tau$.
The subcomplexes $\text{div}_K(\tau) := s^{-1}(2^{\tau}) \leq \text{div} K$ are called \textit{restrictions of $\text{div} K$ to $\tau$} for $\tau\in F_i(K)$. $\text{div}_K\tau$ corresponds to the subdivision of $\tau$ as a face in $K$.

\begin{definition}\label{def:barycentric}
A subdivision procedure $\text{div}$ is said to be \textit{inclusion-uniform} if for every $d$-dimensional complex $K$ and face $\tau\in K$ of dimension $i$, $i\in \{0,...,d\}$, every possible identification of $\tau$ with $\Delta_i$ extends to an isomorphism between $\text{div}_K\tau$ and $\text{div}\Delta_i$, i.e. let $\tau = \text{conv}(v_0,...,v_i)$ and given a bijection $f: \{v_0,...,v_i\} \rightarrow \{e_1,...,e_{i+1}\} = F_0(\Delta_i)$ there exists a unique simplicial isomorphism $\tilde{f}: \text{div}_K\tau \rightarrow \text{div}\Delta_i$ such that $\tilde{f}_{\vert_{\{v_0,...,v_i\}}} = f$.
\end{definition}

An immediate consequence of the definition is that for two complexes $K$ and $L$ and dedicated faces $\tau\in F_i(K)$, $\sigma\in F_i(L)$ with a bijective vertex map $\pi: F_0(\tau) \rightarrow F_0(\sigma)$ there is a unique simplicial isomorphism
$$\tilde{\pi}: \text{div}_K\tau \rightarrow \text{div}_L\sigma$$
such that $\tilde{\pi}(v) = \pi(v)$ for $v\in F_0(\tau)$.

Note that the barycentric subdivision - $\text{sd}$ defined as the complex of increasing sequences of faces (so called \textit{flags}) in $K$ is itself inclusion-uniform. inclusion-uniform subdivisions are uniquely determined by a sequence of subdivisions $\text{div} \Delta_i$ of $\Delta_i$, $i\in\mathbb{N}$, such that the restriction of $\text{div}\Delta_i$ to $\sigma$ is isomorphic to $\text{div}\Delta_{i-1}$ for every $\sigma\in F_{i-1}(\Delta_i)$. Such a sequence is called a \textit{subdivision scheme} in the following. As the face number of the subdivided $i$-simplex is intrinsic to $\text{div}$ in what follows we will write
$$f_i(\text{div}) := f_i(\text{div}\Delta_i),$$
i.e. $f_i(\text{div})$ counts the number of facets the standard $i$-simplex gets subdivided in.

In particular inclusion-uniform subdivisions are a special case of repeatable subdivisions, i.e. subdivisions which can be applied arbitrarily often to any initial complex $K$. This can be seen by describing the procedure of subdividing according to $\text{div}$ in an iterative manner. Let $K$ be a given $d$-dimensional complex, then the isomorphism type of $\text{div}K$ can be obtained from $K$ and a subdivision scheme $\{\text{div}\Delta_i\}_{i=0,...,d}$ by the following inductive construction: Set $K_0 = F_0(K)$.\\
Now let $K_i$ be constructed for some $0\leq i < d$. For every $\tau\in F_{i+1}(K)$ let $\tau = \text{conv}(v_0,...,v_{i+1})$. Identify $\{v_0,...,v_{i+1}\}$ with $\{e_1,...,e_{i+2}\}$ arbitarily and let $\tilde{f}$ denote the isomorphism of $\text{div}_K\tau$ and $\text{div}\Delta_{i+1}$ induced by this identification. Add to $K_i$ the pre-image of $\tilde{f}$ and proceed with the next $(i+1)$-face of $K$. This way we obtain $K_{i+1}$.

Note that since $\text{div}$ is inclusion-uniform the construction does not depend on the chosen identifications and thus $K_d$ is isomorphic to $\text{div}K$. It is apparent by this procedure that $\text{div}$ is repeatable.

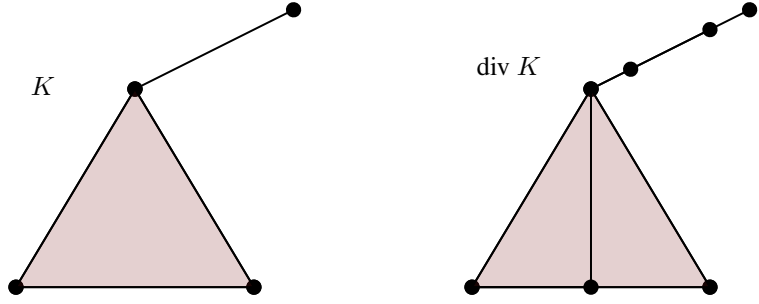
\begin{figure}
    \centering
    \tikzset{every picture/.style={line width=0.75pt}} 

\begin{tikzpicture}[x=0.75pt,y=0.75pt,yscale=-1,xscale=1]

\draw    (180,70) -- (260,30) ;
\draw [shift={(260,30)}, rotate = 333.43] [color={rgb, 255:red, 0; green, 0; blue, 0 }  ][fill={rgb, 255:red, 0; green, 0; blue, 0 }  ][line width=0.75]      (0, 0) circle [x radius= 3.35, y radius= 3.35]   ;
\draw [shift={(180,70)}, rotate = 333.43] [color={rgb, 255:red, 0; green, 0; blue, 0 }  ][fill={rgb, 255:red, 0; green, 0; blue, 0 }  ][line width=0.75]      (0, 0) circle [x radius= 3.35, y radius= 3.35]   ;
\draw    (180,70) -- (240,170) ;
\draw [shift={(240,170)}, rotate = 59.04] [color={rgb, 255:red, 0; green, 0; blue, 0 }  ][fill={rgb, 255:red, 0; green, 0; blue, 0 }  ][line width=0.75]      (0, 0) circle [x radius= 3.35, y radius= 3.35]   ;
\draw [shift={(180,70)}, rotate = 59.04] [color={rgb, 255:red, 0; green, 0; blue, 0 }  ][fill={rgb, 255:red, 0; green, 0; blue, 0 }  ][line width=0.75]      (0, 0) circle [x radius= 3.35, y radius= 3.35]   ;
\draw    (120,170) -- (240,170) ;
\draw [shift={(240,170)}, rotate = 0] [color={rgb, 255:red, 0; green, 0; blue, 0 }  ][fill={rgb, 255:red, 0; green, 0; blue, 0 }  ][line width=0.75]      (0, 0) circle [x radius= 3.35, y radius= 3.35]   ;
\draw [shift={(120,170)}, rotate = 0] [color={rgb, 255:red, 0; green, 0; blue, 0 }  ][fill={rgb, 255:red, 0; green, 0; blue, 0 }  ][line width=0.75]      (0, 0) circle [x radius= 3.35, y radius= 3.35]   ;
\draw    (180,70) -- (147.06,124.9) -- (120,170) ;
\draw [shift={(120,170)}, rotate = 120.96] [color={rgb, 255:red, 0; green, 0; blue, 0 }  ][fill={rgb, 255:red, 0; green, 0; blue, 0 }  ][line width=0.75]      (0, 0) circle [x radius= 3.35, y radius= 3.35]   ;
\draw [shift={(180,70)}, rotate = 120.96] [color={rgb, 255:red, 0; green, 0; blue, 0 }  ][fill={rgb, 255:red, 0; green, 0; blue, 0 }  ][line width=0.75]      (0, 0) circle [x radius= 3.35, y radius= 3.35]   ;
\draw [fill={rgb, 255:red, 123; green, 20; blue, 20 }  ,fill opacity=0.2 ]   (180,70) -- (240,170) -- (120,170) -- cycle ;
\draw    (410,70) -- (490,30) ;
\draw [shift={(490,30)}, rotate = 333.43] [color={rgb, 255:red, 0; green, 0; blue, 0 }  ][fill={rgb, 255:red, 0; green, 0; blue, 0 }  ][line width=0.75]      (0, 0) circle [x radius= 3.35, y radius= 3.35]   ;
\draw [shift={(410,70)}, rotate = 333.43] [color={rgb, 255:red, 0; green, 0; blue, 0 }  ][fill={rgb, 255:red, 0; green, 0; blue, 0 }  ][line width=0.75]      (0, 0) circle [x radius= 3.35, y radius= 3.35]   ;
\draw    (410,70) -- (470,170) ;
\draw [shift={(470,170)}, rotate = 59.04] [color={rgb, 255:red, 0; green, 0; blue, 0 }  ][fill={rgb, 255:red, 0; green, 0; blue, 0 }  ][line width=0.75]      (0, 0) circle [x radius= 3.35, y radius= 3.35]   ;
\draw [shift={(410,70)}, rotate = 59.04] [color={rgb, 255:red, 0; green, 0; blue, 0 }  ][fill={rgb, 255:red, 0; green, 0; blue, 0 }  ][line width=0.75]      (0, 0) circle [x radius= 3.35, y radius= 3.35]   ;
\draw    (350,170) -- (470,170) ;
\draw [shift={(470,170)}, rotate = 0] [color={rgb, 255:red, 0; green, 0; blue, 0 }  ][fill={rgb, 255:red, 0; green, 0; blue, 0 }  ][line width=0.75]      (0, 0) circle [x radius= 3.35, y radius= 3.35]   ;
\draw [shift={(350,170)}, rotate = 0] [color={rgb, 255:red, 0; green, 0; blue, 0 }  ][fill={rgb, 255:red, 0; green, 0; blue, 0 }  ][line width=0.75]      (0, 0) circle [x radius= 3.35, y radius= 3.35]   ;
\draw    (410,70) -- (377.06,124.9) -- (350,170) ;
\draw [shift={(350,170)}, rotate = 120.96] [color={rgb, 255:red, 0; green, 0; blue, 0 }  ][fill={rgb, 255:red, 0; green, 0; blue, 0 }  ][line width=0.75]      (0, 0) circle [x radius= 3.35, y radius= 3.35]   ;
\draw [shift={(410,70)}, rotate = 120.96] [color={rgb, 255:red, 0; green, 0; blue, 0 }  ][fill={rgb, 255:red, 0; green, 0; blue, 0 }  ][line width=0.75]      (0, 0) circle [x radius= 3.35, y radius= 3.35]   ;
\draw [fill={rgb, 255:red, 123; green, 20; blue, 20 }  ,fill opacity=0.2 ]   (410,70) -- (470,170) -- (350,170) -- cycle ;
\draw    (410,70) -- (410,170) ;
\draw    (350,170) -- (410,170) ;
\draw [shift={(410,170)}, rotate = 0] [color={rgb, 255:red, 0; green, 0; blue, 0 }  ][fill={rgb, 255:red, 0; green, 0; blue, 0 }  ][line width=0.75]      (0, 0) circle [x radius= 3.35, y radius= 3.35]   ;
\draw    (410,70) -- (430,60) ;
\draw [shift={(430,60)}, rotate = 333.43] [color={rgb, 255:red, 0; green, 0; blue, 0 }  ][fill={rgb, 255:red, 0; green, 0; blue, 0 }  ][line width=0.75]      (0, 0) circle [x radius= 3.35, y radius= 3.35]   ;
\draw    (430,60) -- (470,40) ;
\draw [shift={(470,40)}, rotate = 333.43] [color={rgb, 255:red, 0; green, 0; blue, 0 }  ][fill={rgb, 255:red, 0; green, 0; blue, 0 }  ][line width=0.75]      (0, 0) circle [x radius= 3.35, y radius= 3.35]   ;

\draw (126,62.4) node [anchor=north west][inner sep=0.75pt]    {$K$};
\draw (351,52.4) node [anchor=north west][inner sep=0.75pt]    {$\text{div} \ K$};

\end{tikzpicture}
    \caption{Subdivision procedure which is not inclusion-uniform. See how there are edges subdivided by one or two vertices or not even subdivided at all. Obviously those are not isomorphic as simplicial complexes. Note also that the subdivision of the $2$-face is not rotational invariant which would be necessary for $\text{div}$ to be inclusion-uniform.}
    \label{fig:nonbary}
\end{figure}

Furthermore in what follows we will call $\text{div}$ \textit{finitely ramified} or \textit{of finite ramification} if
$$f_{d-1}(\text{div}) = 1,$$
i.e. if $\text{div}$ only acts non-trivially on $d$-faces. This notion is inspired by the fractal concept underlying the spectral theory we are discussing in the upcoming section, see Section \ref{sec:fractal} for this connection.

In order to prove the main theorem of this paper we will need another operation on simplicial complexes.

\subsubsection*{Gluing and inclusion-uniform subdivisions}
We now consider two formally disjoint $d$-dimensional complexes $K$ and $L$.
Let $\mathcal{G}$ be a relation on the set $F_0(K)\times F_0(L)$. We write $v\mathcal{G}w$ for $\mathcal{G}(v,w)$.

\begin{definition}
We say that $\mathcal{G}$ defines a \textit{gluing} of $K$ and $L$ if the following holds:
\begin{itemize}
    \item For every vertex $v\in F_0(K)$ there is at most one vertex $w\in F_0(L)$ such that $v\mathcal{G}w$ and vice versa, i.e. let
    $$G_0(K) := \{v\in F_0(K) ~|~ \exists_{w\in F_0(L)}: ~ v\mathcal{G} w\}$$
    and $G_0(L)$ analogously, then there is a bijection $\varphi: G_0(K)\rightarrow G_0(L)$ such that $v\mathcal{G}w$ iff $w = \varphi(v)$.
    \item $\varphi$ induces a well-defined simplicial isomorphism between $K_{\vert_{G_0(K)}}$ and $L_{\vert_{G_0(L)}}$.
\end{itemize}
\end{definition}

In the following we denote by $G(K)$ and $G(L)$ the vertex-induced subcomplexes $K_{\vert_{G_0(K)}}$ and $L_{\vert_{G_0(L)}}$.

Note that since $\varphi$ induces a well-defined simplicial isomorphism $\tilde{\varphi}$ between $G(K)$ and $G(L)$ the \textit{glued complex}
$$K\mathcal{G}_\ast L := \bigslant{K\sqcup L}{\sim_\mathcal{G}}$$
is well-defined for $\sim_\mathcal{G}$ being the relation on $K\times L$ generated by the relations $\sigma\sim_\mathcal{G}\tilde{\varphi}(\sigma)$ for $\sigma\in G(K)$.
Denote for a gluing $\mathcal{G}$ by $r_i(\mathcal{G})$ the number of non-trivial relations
$$\tau\sim_{\mathcal{G}}\sigma$$
for $\tau\in F_i(K)$, $\sigma\in F_i(L)$.

Note that gluing procedures of more than two complexes can be defined inductively. In this case we write
$$\mathcal{G}_\ast(K_1,...,K_\ell)$$
for the glued complex.

In the following let $s_K, s_L, s$ denote the subdivision maps of $K, L$ and $K\mathcal{G}_\ast L$, respectively.
Given two complexes $K$ and $L$ let $\iota_K: ~ \text{div}K \rightarrow \text{div}(K\mathcal{G}_* L)$ and $\iota_L: ~\text{div}L \rightarrow \text{div}(K\mathcal{G}_\ast L)$ be the natural geometrical inclusions induced by the inclusions $\iota_K':K\rightarrow K\mathcal{G}_\ast L$ and $\iota_L': L \rightarrow K\mathcal{G}_\ast L$ over the isomorphism derived from Definition \ref{def:barycentric}, i.e. for every face $\tau = \{v_0,...,v_i\}\in K$ we define
$$\iota_K{}_{\vert_{\text{div}_K\tau}} := \widetilde{(\iota_K')_{\vert_{\tau}}}.$$
This definition is compatible along boundaries and thus assembles to a well-defined injective function (since $s_K^{-1}(\tau)$ are disjoint sets for distinct $\tau$'s).

Obviously two faces in $\text{div} K$ and $\text{div} L$ can only be mapped onto the same face by $\iota_K$ and $\iota_L$ in $\text{div}(K\mathcal{G}_\ast L)$ if they lie in some face in $G(K)$ or $G(L)$, respectively. Furthermore the union of images $\text{im}\iota_K \cup \text{im}\iota_L$ exhausts $\text{div}(K\mathcal{G}_\ast L)$ and so $\text{div}(K\mathcal{G}\ast L)$ can be obtained as a gluing from $\text{div}K$ and $\text{div} L$ by identifying faces which are mapped the same face in $\text{div}(K\mathcal{G}_\ast L)$.

This gluing procedure is precisely given by the relation $\mathcal{G}'$ generated by
$$v\mathcal{G}' w$$
for $v\in F_0(\text{div} K)$ and $w\in F_0(\text{div} L)$ if $\iota_K(v) = \iota_L(w)$.
Thus
$$G_0(\text{div} K) = F_0(s^{-1}(G(K))), ~~ G_0(\text{div} L) = F_0(s^{-1}(G(L)))$$
and the bijection $\varphi': G_0(\text{div}K) \rightarrow G_0(\text{div} L)$ satisfying the two conditions of a gluing is given by
\begin{equation}\label{eq:gluing_bij}
\varphi'(v) := \widetilde{(\iota_L')_{\vert_\sigma}}^{-1}\circ \widetilde{(\iota_K')_{\vert_\tau}}(v)
\end{equation}
for $\tau := s_K^{-1}(v)$ and $\sigma := \iota_L'^{-1}\circ\iota_K'(\tau)$.  By definition the simplicial map defined by $\varphi'$ is compatible along boundaries and yields an isomorphism of the respective vertex-induced subcomplexes.

By all the above we have
$$\text{div}(K\mathcal{G}_\ast L) \cong (\text{div} K)\mathcal{G}'_\ast (\text{div} L).$$

Note that assuming $r_d(\mathcal{G}) = 0$, i.e. $\mathcal{G}$ does not identify facets of $K$ and $L$ with each other, the newly defined gluing $\mathcal{G}'$ satisfies
$$r_{d-1}(\mathcal{G}') = f_{d-1}(\text{div})\cdot r_{d-1}(\mathcal{G}).$$

We summarize this procedure in the following proposition for later use.
\begin{proposition}[Subdivision gluing]\label{prop:gluing}
Let $\text{\normalfont div}$ denote a inclusion-uniform subdivision. Given a gluing $\mathcal{G}$ of $K$ and $L$ satisfying $r_d(\mathcal{G}) = 0$ there exists a gluing $\mathcal{G}'$ of $\text{\normalfont div} K$ and $\text{\normalfont div} L$ so that $\text{\normalfont div}(K\mathcal{G}_\ast L) = (\text{\normalfont div}K)\mathcal{G}'_\ast(\text{\normalfont div}L)$ and
$$r_{d-1}(\mathcal{G}') = f_{d-1}(\text{\normalfont div})\cdot r_{d-1}(\mathcal{G}).$$
\end{proposition}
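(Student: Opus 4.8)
The plan is to take $\mathcal{G}'$ to be precisely the relation constructed in the discussion preceding the statement, namely $v\mathcal{G}'w$ for $v\in F_0(\text{div} K)$, $w\in F_0(\text{div} L)$ whenever $\iota_K(v) = \iota_L(w)$, and then to verify each assertion in turn. First I would check that $\mathcal{G}'$ is a genuine gluing. The inclusions $\iota_K,\iota_L$ are injective simplicial maps, being assembled from the face-wise isomorphisms $\widetilde{(\iota_K')_{\vert_\tau}}$ which agree on overlaps by inclusion-uniformity; hence $\iota_K(v) = \iota_L(w)$ pairs each $v\in G_0(\text{div} K) = F_0(s^{-1}(G(K)))$ with at most one $w$ and conversely, giving the required bijection $\varphi'$ of \eqref{eq:gluing_bij}. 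That $\varphi'$ induces a well-defined simplicial isomorphism of the vertex-induced subcomplexes is immediate once one observes it is a composite of the isomorphisms supplied by Definition \ref{def:barycentric}, which are compatible along shared boundary faces by the uniqueness clause therein.

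Next I would establish the identity $\text{div}(K\mathcal{G}_\ast L) = (\text{div} K)\mathcal{G}'_\ast(\text{div} L)$. The key input is that $\text{im}\,\iota_K \cup \text{im}\,\iota_L$ exhausts $\text{div}(K\mathcal{G}_\ast L)$: every face of the subdivision lies in a face of $K\mathcal{G}_\ast L$, which is in turn the image of a face of $K$ or of $L$, so it is hit by $\iota_K$ or $\iota_L$. Two faces of $\text{div} K$ and $\text{div} L$ have the same image under the respective inclusions exactly when they are identified by $\mathcal{G}'$, which is the defining property of $\mathcal{G}'$. Hence the universal property of the glued complex identifies $(\text{div} K)\mathcal{G}'_\ast(\text{div} L)$ with $\text{div}(K\mathcal{G}_\ast L)$.

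Finally I would count the non-trivial $(d-1)$-relations of $\mathcal{G}'$. Since $r_d(\mathcal{G}) = 0$, the glued subcomplex $G(K)$ has dimension at most $d-1$, so $\mathcal{G}$ identifies only faces of dimension $\leq d-1$, and the $r_{d-1}(\mathcal{G})$ pairs $\tau\sim_\mathcal{G}\sigma$ with $\tau\in F_{d-1}(K)$ are exactly its top-dimensional glued faces. For each such $\tau$, inclusion-uniformity gives $\text{div}_K\tau\cong\text{div}\Delta_{d-1}$, which contains exactly $f_{d-1}(\text{div})$ faces of dimension $d-1$, and $\mathcal{G}'$ identifies each with its image in $\text{div}_L\sigma$. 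Conversely, any $(d-1)$-relation $\rho\sim_{\mathcal{G}'}\rho'$ forces $s_K(\rho)\in G(K)$ of dimension $\geq d-1$, hence a glued $(d-1)$-face; and since a $(d-1)$-face $\rho$ of $\text{div}_K\tau$ has $s_K(\rho)=\tau$, distinct glued faces $\tau,\tau'$ contribute disjoint families (any shared face maps to $\tau\cap\tau'$, of dimension $\leq d-2$). Summing over the $r_{d-1}(\mathcal{G})$ glued faces yields $r_{d-1}(\mathcal{G}') = f_{d-1}(\text{div})\cdot r_{d-1}(\mathcal{G})$.

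The step I expect to be the main obstacle is this last count, and specifically the role of the hypothesis $r_d(\mathcal{G}) = 0$: without it, glued $d$-faces would contribute interior $(d-1)$-faces of their subdivisions to $\mathcal{G}'$, destroying the multiplicative formula. The care therefore lies in confining the gluing to the $(d-1)$-skeleton and arguing that each subdivided boundary face is counted exactly once; the first two steps are essentially bookkeeping that repackages the construction already carried out above the statement.
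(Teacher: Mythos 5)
Your proposal is correct and follows essentially the same route as the paper: the paper's own justification of this proposition is precisely the construction of $\mathcal{G}'$ via the inclusions $\iota_K,\iota_L$ in the discussion preceding the statement, of which the proposition is an explicit summary. Your third step simply supplies the counting detail that the paper leaves as an unproven remark — namely that each glued $(d-1)$-face contributes exactly $f_{d-1}(\text{div})$ relations, that these families are disjoint, and that nothing else can contribute because a $(d-1)$-simplex of $\text{div}K$ cannot lie in a face of $G(K)$ of dimension less than $d-1$ — and this is a faithful (and welcome) elaboration rather than a different argument.
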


The $d$-Laplacian operator of the glued complex has the form
$$\Delta(K\mathcal{G}_\ast L) = \begin{pmatrix}
\Delta(K) + D_K & G\\
G^t & \Delta(L) + D_L
\end{pmatrix},$$
where $G$ maps a $d$-face $\tau$ of $K$ to a sum of $d$-faces $\tau'$ of $L$ (with some signs given by orientations) if there are $\sigma\in F_{d-1}(\tau)$ and $\sigma'\in F_{d-1}(\tau')$ such that $\sigma\sim_\mathcal{G}\sigma'$ and $D_K$, $D_L$ are diagonal matrices counting the $(d-1)$-faces for every $d$-face which are involved in gluing for $K$ and $L$, respectively. Thus if we denote by $D$ the maximal down-degree of $K\mathcal{G}_\ast L$ we have
$$||D_K||_{L^1}, ||D_L||_{L^1} \leq D\cdot \max(f_d(K), f_d(L))$$
and
$$||G||_{L^1} \leq r_{d-1}(\mathcal{G}).$$

\section{The Universal Limit Theorem for inclusion-uniform subdivisions}\label{sec:universality}

Now that we have all relevant notions from the introductory section at hand we can prove the main result of this paper, Theorem \ref{th:universality}.

The proof works in two steps which we will state in two propositions. The theorem then follows from the combination of Propositions \ref{prop:dominance} and \ref{prop:convergence}.

For the rest of the chapter let $d$ and $\text{div}$ as in Theorem \ref{th:universality} be fixed. Note that the non-triviality of $\text{div}$ can be equivalently states as $f_d(\text{div}) > 1$. Further let $K$ be an arbitrary initial $d$-dimensional complex. 
$(K_n)_{n\in\mathbb{N}}$ denotes the sequence of complexes generated by iterated application of $\text{div}$ to the initial complex $K$, i.e. $K_n := \text{div}^n K = \text{div}K_{n-1}$, $K_0 = K$. Furthermore by $\mathcal{L}_n$ and $\Lambda_n$ we denote the corresponding sequence of Laplacians and their shifted spectral quantile functions $\Lambda(\mathcal{L}_n)\in L^1([0,1])$, respectively. The claim is thus that $\Lambda_n$ converges towards a universal distribution of eigenvalues depending only on $d$.

\begin{proposition}[Dominance of local spectra]\label{prop:dominance}
Let $\Delta_d$ denote the standard-$d$-simplex. Then in the setting of Theorem \ref{th:universality} it holds that
$$||\Lambda_n - \Lambda(\mathcal{L}(\text{\normalfont div}^n \Delta_d))||_{L^1} \xrightarrow{n\rightarrow\infty} 0,$$
i.e. the spectral quantile function of $K_n$ is asymptotically $L^1$-equivalent to the spectral quantile function of the sequence obtained by subdividing $\Delta_d$.
\end{proposition}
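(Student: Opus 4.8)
The plan is to present $K$ as a gluing of copies of the standard simplex, to push that presentation through $\text{div}$ by Proposition~\ref{prop:gluing}, and then to recognise $\mathcal{L}_n$ as a block-diagonal matrix of identical blocks plus a perturbation supported on the gluing interfaces, whose normalised $L^1$-weight decays geometrically; Corollary~\ref{cor:wielandt} then closes the argument. The only genuinely combinatorial input will be a strict inequality between the facet numbers $f_{d-1}(\text{div})$ and $f_d(\text{div})$, and that is where I expect the real work to sit.

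First I would decompose. The $d$-faces of $K$ are geometric copies $\Delta_d^{(1)},\dots,\Delta_d^{(m)}$ of $\Delta_d$ with $m=f_d(K)$, and $K$ is recovered from their formal disjoint union by identifying the faces of dimension $\leq d-1$ they share. Writing $\mathcal{G}$ for the gluing encoding these identifications, we have $K=\mathcal{G}_\ast(\Delta_d^{(1)},\dots,\Delta_d^{(m)})$, and since distinct facets of $K$ are never identified, $r_d(\mathcal{G})=0$. The multi-complex version of Proposition~\ref{prop:gluing}, applied $n$ times, then yields
$$\text{div}^n K = \mathcal{G}_n'{}_\ast\bigl(\text{div}^n\Delta_d^{(1)},\dots,\text{div}^n\Delta_d^{(m)}\bigr), \qquad r_{d-1}(\mathcal{G}_n') = f_{d-1}(\text{div})^n\, r_{d-1}(\mathcal{G}).$$

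Next I would compare Laplacians. The block form of the glued $d$-Laplacian gives $\mathcal{L}_n = \mathcal{L}_0 + E_n$, where $\mathcal{L}_0 = \bigoplus_{j=1}^m \mathcal{L}(\text{div}^n\Delta_d^{(j)})$ is the block-diagonal Laplacian of the disjoint union and $E_n$ collects the diagonal down-degree corrections $D$ together with the off-diagonal gluing blocks $G$. All $m$ blocks are down-Laplacians of isomorphic subdivided simplices, hence cospectral (changing orientation only conjugates by a diagonal sign matrix), so duplicating a block $m$-fold scales both eigenvalue multiplicities and the normalising size by $m$ and leaves the shifted spectral quantile function unchanged; thus $\Lambda(\mathcal{L}_0)=\Lambda(\mathcal{L}(\text{div}^n\Delta_d))$. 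Corollary~\ref{cor:wielandt} now gives
$$\bigl|\bigl|\Lambda_n - \Lambda(\mathcal{L}(\text{div}^n\Delta_d))\bigr|\bigr|_{L^1} \leq ||E_n||_1^{\text{norm}} = \frac{||E_n||_1}{N_n}, \qquad N_n = f_d(\text{div})^n f_d(K),$$
using that each iteration multiplies the number of $d$-faces by $f_d(\text{div})$. Each interface ridge counted by $\mathcal{G}_n'$ touches exactly two $d$-faces and so contributes only a bounded number of entries to $D$ and $G$, whence $||E_n||_1 = O\!\bigl(r_{d-1}(\mathcal{G}_n')\bigr) = O\!\bigl(f_{d-1}(\text{div})^n\bigr)$ and $||E_n||_1^{\text{norm}} = O\!\bigl((f_{d-1}(\text{div})/f_d(\text{div}))^n\bigr)$.

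The decisive step, and the one I expect to be the main obstacle, is to verify the strict inequality $f_{d-1}(\text{div}) < f_d(\text{div})$ that turns this ratio into a genuine geometric factor. I would establish it by double counting facet incidences in $\text{div}\Delta_d$: each of the $d+1$ boundary facets is subdivided into $f_{d-1}(\text{div})$ ridges, every such boundary ridge lies in a single $d$-face, while every interior ridge lies in exactly two $d$-faces. Counting the $(d+1)f_d(\text{div})$ pairs (face, ridge) in both ways gives
$$\#\{\text{interior ridges}\} = \frac{d+1}{2}\bigl(f_d(\text{div}) - f_{d-1}(\text{div})\bigr).$$
Since $\text{div}$ acts non-trivially ($f_d(\text{div})>1$) and $\text{div}\Delta_d$ is a triangulation of the connected ball $\Delta_d$, its $d$-dual graph $\Gamma^{(d)}(\text{div}\Delta_d)$ is connected and hence carries at least one edge, i.e.\ at least one interior ridge exists; the displayed identity then forces $f_d(\text{div})>f_{d-1}(\text{div})$. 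With this, $||E_n||_1^{\text{norm}}\to 0$ geometrically, which is exactly the claim.
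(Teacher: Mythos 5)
Your proposal is correct, and its main body is essentially the paper's own argument: the same presentation of $K$ as a gluing $\mathcal{G}_\ast(\Delta_d,\dots,\Delta_d)$ with $r_d(\mathcal{G})=0$, the same iterated use of Proposition~\ref{prop:gluing} to get $r_{d-1}(\mathcal{G}_n') = f_{d-1}(\text{div})^n r_{d-1}(\mathcal{G})$, the same block-diagonal-plus-interface-perturbation reading of $\mathcal{L}_n$, and the same closing via Corollary~\ref{cor:wielandt} together with the observation that duplicating identical blocks leaves the shifted spectral quantile function unchanged. Where you genuinely diverge is the decisive inequality $f_{d-1}(\text{div}) < f_d(\text{div})$, which the paper isolates as Lemma~\ref{lemm:ineqfvecs} and proves by contradiction: assuming $f_d \le f_{d-1}$, it builds a matching of boundary ridges to facets, shows no facet can carry two ridges in the same boundary face $\sigma_i$ (an affine-independence argument), concludes every facet must meet all $d+1$ boundary faces in a ridge, and then uses the geometric fact that the only $d$-simplex in $\Delta_d$ with $\partial\sigma \cap \mathring{\Delta}_d = \emptyset$ is $\Delta_d$ itself, contradicting non-triviality. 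You instead double-count ridge--facet incidences, $(d+1)f_d(\text{div}) = (d+1)f_{d-1}(\text{div}) + 2\,\#\{\text{interior ridges}\}$, and force an interior ridge to exist from non-triviality plus connectedness of the dual graph of the triangulated ball. Both routes rest on the same substrate (purity and the pseudo-manifold property of a triangulation of the $d$-disk, which the paper also invokes without proof); yours is shorter and has a useful byproduct, namely the exact interior-ridge count, which immediately recovers the bound $r_{d-1}(\mathcal{G}) \le \frac{d+1}{2}f_d(\text{div})$ that the paper needs separately in Proposition~\ref{prop:convergence}. The one step you should not leave as a bare assertion is the strong connectivity of the dual graph (any two facets of a triangulation of a connected manifold with boundary are joined by a chain of ridge-adjacent facets); it is standard but is doing real work, since without it the displayed identity alone is compatible with $f_d = f_{d-1}$. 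A minor remark applying to both proofs: since $\mathcal{L}_d = \partial_d^*\partial_d$ has constant diagonal $d+1$, the diagonal "degree corrections" are actually zero in this convention, so the perturbation $E_n$ is purely off-diagonal; including them only loosens the constant and harms nothing.
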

What this means is that global features of the spectrum eventually become dominated by the local features introduced by subdivision of a single simplex.

\begin{proof}
The proof esentially uses Corollary \ref{cor:wielandt} with a counting of non-zero entries which have to be removed in order to transform $\mathcal{L}_n$ in a suitable block-diagonal form. This counting is mainly performed by Proposition \ref{prop:gluing}.

As $K$ is $d$-dimensional the only faces relevant for $\mathcal{L}_n$ are the faces in $F_d(K_n)$ and their down-adjacencies (with respect to an arbitrary orientation of $K$).
Thus we can without loss of generality assume $K$ to be pure and consequently $K_n$ to be pure aswell.

Let $N := f_d(K)$. Note that $K$ can be written as a gluing of $N$ standard-$d$-simplices by purity;
$$K = \mathcal{G}_\ast(\Delta_d,...,\Delta_d),$$
where $\mathcal{G}$ is defined by the lower-adjecencies of the facets of $K$ und some arbitrary identification with the $N$ copies of $\Delta_d$. In particular $r_d(\mathcal{G}) = 0$.

Since $\text{div}$ is inclusion-uniform the process of subdividing $K$ corresponds to subdividing the copies of $\Delta_d$ according to its subdivision scheme $\{\text{div}\Delta_i\}_{i\in\mathbb{N}}$ under induced identification of their faces so that by iterated application of Proposition \ref{prop:gluing} we can write $K_n$ as
$$K_n = \mathcal{G}^{(n)}(\text{div}^n \Delta_d,...,\text{div}^n\Delta_d).$$
Where the number of identifications of $(d-1)$-faces is
$$r_{d-1}(\mathcal{G}^{(n)}) = (f_{d-1}(\text{div}))^n r_{d-1}(\mathcal{G}).$$
Let $L_n$ denote the sequence of Laplacians of $\text{div}^n\Delta_d$. Then the $d$-Laplacian of $K_n$ is of the form
$$\mathcal{L}_n = \begin{pmatrix}
L_n + D_1 & G_{12} & G_{13} & ... & G_{1N}\\
G_{12}^t & L_n + D_2 & G_{23} & ... & G_{2N}\\
\vdots & \ddots & \ddots & \ddots & \vdots\\
\vdots & & \ddots & \ddots & G_{(N-1)N}\\
G_{1N}^t & ... & G_{(N-2)N}^t & G_{(N-1)N}^t & L_n + D_N
\end{pmatrix},$$
where $D_k$ corrects the degrees on the diagonal of $L_n$ along the boundary of the $k$-th copy of $\text{div}^n\Delta_d$. This correction consists of addition by one for every $(d-1)$-face of a $d$-face involved in the gluing process defined by $\mathcal{G}^{(n)}$. Let $D$ be the maximal down-degree of the facets of $K$, then
$$||D_i||_1 \leq D \cdot (f_{d-1}(\text{div}))^n.$$

Further $G_{ij}$ are the matrices containing the down-adjecencies added by gluing the copies $\text{div}^n\Delta_d$ according to $\mathcal{G}^{(n)}$. Note that only $r_{d-1}(\mathcal{G})$ of those $G_{ij}$ are non-zero matrices and the non-zero $G_{ij}$'s have
$$||G_{ij}||_1 \leq (f_{d-1}(\text{div}))^n$$
so that in total by Corollary \ref{cor:wielandt} we have
$$||\Lambda(\mathcal{L}(\tilde{K}_n)) - \Lambda(\mathcal{L}(K_n))||_{L_1} \leq \frac{(ND+2\cdot r_{d-1}(\mathcal{G}))(f_{d-1}(\text{div}))^n}{N\cdot (f_d(\text{div}))^n} \leq (D+2\cdot r_{d-1}(\mathcal{G}))\Big(\frac{f_{d-1}(\text{\normalfont div})}{f_d(\text{\normalfont div})}\Big)^n.$$
where
$$\tilde{K}_n = \bigsqcup_{j=1}^N \text{div}^n \Delta_d$$
with Laplacian matrix
$$\mathcal{L}(\tilde{K}_n) = \text{diag}(L_n,...,L_n).$$
Note that by this equation it holds that
$$\Lambda(\tilde{K}_n) = \Lambda(\text{div}^n\Delta_d).$$
Thus the claim holds iff $$f_{d-1}(\text{div}) < f_d(\text{div}).$$
This will be shown in Lemma \ref{lemm:ineqfvecs}.
\end{proof}

The above proposition immediately shows universality of a limiting function if it exists. The following proposition shows its existence.

\begin{proposition}[Convergence of local spectra]\label{prop:convergence}
Let $K = \Delta_d$ in the setting of Theorem \ref{th:universality}. Then the sequence $(\Lambda_n)_{n\in\mathbb{N}}$ converges in $L^1$.
\end{proposition}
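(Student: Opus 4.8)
The plan is to prove that $(\Lambda_n)_{n\in\mathbb{N}}$ is a Cauchy sequence in $L^1([0,1])$; since $L^1([0,1])$ is complete, this yields the asserted convergence. The whole point is to exploit the self-similarity relation $\text{div}^{n+1}\Delta_d = \text{div}^n(\text{div}\Delta_d)$, which lets me treat $\text{div}\Delta_d$ as a \emph{new initial complex} and reuse, almost verbatim, the dominance estimate established in the proof of Proposition \ref{prop:dominance}.

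First I would record the structure of $\text{div}\Delta_d$: it is a pure $d$-complex whose $f_d(\text{div})$ facets are each isomorphic to $\Delta_d$, so it can be written as a gluing $\text{div}\Delta_d = \mathcal{G}_\ast(\Delta_d,\ldots,\Delta_d)$ of $M := f_d(\text{div})$ copies of the standard simplex along their shared interior $(d-1)$-faces, with $r_d(\mathcal{G}) = 0$. This is exactly the hypothesis under which the proof of Proposition \ref{prop:dominance} operates, now with the initial complex $\text{div}\Delta_d$ in place of $K$. Iterating Proposition \ref{prop:gluing}, the complex $\text{div}^n(\text{div}\Delta_d) = \mathcal{G}^{(n)}_\ast(\text{div}^n\Delta_d,\ldots,\text{div}^n\Delta_d)$ is a gluing of $M$ copies of $\text{div}^n\Delta_d$, and the block decomposition of $\mathcal{L}(\text{div}^{n+1}\Delta_d)$ differs from the block-diagonal matrix $\widetilde{L}_n := \text{diag}(\mathcal{L}(\text{div}^n\Delta_d),\ldots,\mathcal{L}(\text{div}^n\Delta_d))$ only in the diagonal boundary corrections $D_i$ and the off-diagonal gluing blocks $G_{ij}$.

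The key observation making this work is that the normalized spectral quantile function of a block-diagonal matrix with $M$ identical blocks equals that of a single block (each eigenvalue simply acquires multiplicity $M$, which is invisible after normalizing by the matrix size), so $\Lambda(\widetilde{L}_n) = \Lambda(\mathcal{L}(\text{div}^n\Delta_d)) = \Lambda_n$. Feeding the perturbation $E = \mathcal{L}(\text{div}^{n+1}\Delta_d) - \widetilde{L}_n$ into Corollary \ref{cor:wielandt} and running the same entry count as in Proposition \ref{prop:dominance} — the diagonal corrections satisfy $\|D_i\|_1 \leq D\cdot f_{d-1}(\text{div})^n$ with $D$ the maximal down-degree of the facets of $\text{div}\Delta_d$, and only $r_{d-1}(\mathcal{G})$ of the off-diagonal blocks are nonzero, each with $\|G_{ij}\|_1 \leq f_{d-1}(\text{div})^n$ — gives a bound of the form
$$\|\Lambda_{n+1} - \Lambda_n\|_{L^1} \leq C\left(\frac{f_{d-1}(\text{div})}{f_d(\text{div})}\right)^n, \qquad C := D + 2\,r_{d-1}(\mathcal{G}),$$
a constant depending only on $\text{div}$ and $d$.

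Finally, by the inequality $f_{d-1}(\text{div}) < f_d(\text{div})$ from Lemma \ref{lemm:ineqfvecs}, the ratio $q := f_{d-1}(\text{div})/f_d(\text{div})$ lies in $(0,1)$, so the consecutive differences are geometrically summable. For $m > n$ this yields
$$\|\Lambda_m - \Lambda_n\|_{L^1} \leq \sum_{k=n}^{m-1} \|\Lambda_{k+1} - \Lambda_k\|_{L^1} \leq C\sum_{k=n}^{\infty} q^k = \frac{C\,q^n}{1-q} \xrightarrow{n\to\infty} 0,$$
establishing the Cauchy property and hence convergence in $L^1$. I expect the only genuinely delicate points to be the two already isolated above: verifying the identity $\Lambda(\widetilde{L}_n) = \Lambda_n$ (so that Proposition \ref{prop:dominance} applied to $\text{div}\Delta_d$ compares $\Lambda_{n+1}$ against $\Lambda_n$ itself rather than some unrelated limit object) and confirming that the geometric ratio is strictly below $1$. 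Everything else is a faithful repetition of the counting performed in Proposition \ref{prop:dominance} with $\text{div}\Delta_d$ as the initial complex.
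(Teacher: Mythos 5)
Your proposal is correct and follows essentially the same route as the paper: the paper also establishes the Cauchy property by writing $\text{div}\Delta_d$ as a gluing $\mathcal{G}_\ast(\Delta_d,\ldots,\Delta_d)$ with $r_d(\mathcal{G})=0$, comparing $\Lambda_n$ to the quantile function of the disjoint union of $f_d(\text{div})$ copies of $K_{n-1}$ (which equals $\Lambda_{n-1}$ by the same block-diagonal multiplicity observation), and summing the resulting geometric bound with ratio $q_d(\text{div}) = f_{d-1}(\text{div})/f_d(\text{div}) < 1$ from Lemma \ref{lemm:ineqfvecs}. The only cosmetic difference is that the paper makes the constant explicit via the pseudo-manifold bound $r_{d-1}(\mathcal{G}) \leq \frac{d+1}{2}f_d(\text{div})$, whereas you keep it abstract as $C = D + 2\,r_{d-1}(\mathcal{G})$, which is equally valid.
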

\begin{proof}
To this end we show that $(\Lambda_n)_{n\in\mathbb{N}}$ is a Cauchy sequence - showing existence of a limit by completeness of $L^1$.

The sequence $K_n$ in this case can be obtained as $K_0 = \Delta_d$ and
$$K_n = \text{div}^{n-1} (\text{div}\Delta_d).$$

Note that
$$\text{div}\Delta_d = \mathcal{G}(\Delta_d,...,\Delta_d)$$
where $\mathcal{G}$ glues $f_d(\text{div})$-many $d$-faces along at most $\frac{d+1}{2} f_{d}(\text{div})$ $(d-1)$-faces (note that $\text{div}\Delta_d$ has to be a pseudo-manifold as a triangulation of the $d$-disk), i.e.
$$r_{d-1}(\mathcal{G}) \leq \frac{d+1}{2} f_d(\text{div})$$
and
$$r_d(\mathcal{G}) = 0.$$
Thus as in the above proposition we have
$$||\Lambda_n - \underbrace{\Lambda(\bigsqcup_{i=1}^{f_d(\text{div})} K_{n-1})}_{= \Lambda_{n-1}}||_{L^1} \leq \underbrace{\frac{(d+1+\frac{d+1}{2}f_d(\text{div}))}{f_d(\text{div})}}_{=:c} \Big(\frac{f_{d-1}(\text{div})}{f_d(\text{div})}\Big)^n.$$
We denote by
$$q_d(\text{div}) := \frac{f_{d-1}(\text{div})}{f_d(\text{div})}$$
and will obtain from Lemma \ref{lemm:ineqfvecs}
$$q_d(\text{div}) < 1.$$
Denote by $n_m = f_d(K_m) / f_d(K_n) = f_d(\text{div})^{m - n}$. Applying the above inequality $m-n$ times, $m > n$, we obtain by triangle inequality that 
$$||\Lambda_m - \underbrace{\Lambda(\bigsqcup_{i=1}^{n_m} K_n)}_{=\Lambda_n}||_{L^1} \leq c\sum_{i=n+1}^\infty q_d(\text{div})^i \xrightarrow{n\rightarrow\infty} 0$$
where the right-hand side is a cut-off of a convergent geometric series. 
Thus $(\Lambda_n)_{n\in\mathbb{N}}$ is a Cauchy sequence. By the completeness of $L^1([0,1])$ we obtain the claim.
\end{proof}

\begin{lemma}\label{lemm:ineqfvecs}
Let $\text{\normalfont div}$ be a non-trivial inclusion-uniform subdivision and $q_d(\text{\normalfont div}) := \frac{f_{d-1}(\text{\normalfont div})}{f_d(\text{\normalfont div})}$. Then it holds that
$$q_d(\text{\normalfont div}) < 1.$$
\end{lemma}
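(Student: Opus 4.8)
The plan is to unwind the definitions so that the inequality becomes a purely enumerative statement about the triangulation $\text{div}\Delta_d$ of the $d$-ball. Recall that $f_d(\text{div})$ counts the $d$-faces (facets) of $\text{div}\Delta_d$, while $f_{d-1}(\text{div})$ counts the $(d-1)$-faces of $\text{div}\Delta_{d-1}$; by the subdivision scheme property, the restriction of $\text{div}\Delta_d$ to each of the $d+1$ boundary facets $\sigma\in F_{d-1}(\Delta_d)$ is isomorphic to $\text{div}\Delta_{d-1}$. Hence it suffices to prove $f_d(\text{div}) > f_{d-1}(\text{div})$, and the natural tool is a double count of the incidences between the $d$-faces and the $(d-1)$-faces (ridges) of $\text{div}\Delta_d$.

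First I would classify the ridges of $\text{div}\Delta_d$ as \emph{boundary ridges} (those contained in $\partial\Delta_d$) and \emph{interior ridges}. A $(d-1)$-face contained in a proper face of $\Delta_d$ must already fill out a facet $\sigma\in F_{d-1}(\Delta_d)$, and distinct facets of $\Delta_d$ meet only in dimension $\le d-2$; thus the boundary ridges are partitioned among the $d+1$ copies $\text{div}_{\Delta_d}\sigma\cong\text{div}\Delta_{d-1}$, giving exactly $(d+1)f_{d-1}(\text{div})$ of them. Since $\text{div}\Delta_d$ is a triangulation of the $d$-ball, hence a pseudomanifold, every interior ridge lies in exactly two facets and every boundary ridge in exactly one. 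Counting the $(d+1)f_d(\text{div})$ facet--ridge incidences in two ways then yields
\[
(d+1)\,f_d(\text{div}) = 2R_{\mathrm{int}} + (d+1)\,f_{d-1}(\text{div}),
\]
where $R_{\mathrm{int}}$ is the number of interior ridges. Equivalently $(d+1)\bigl(f_d(\text{div})-f_{d-1}(\text{div})\bigr)=2R_{\mathrm{int}}\ge 0$, which already gives $q_d(\text{div})\le 1$.

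It then remains to upgrade this to a strict inequality, i.e. to show $R_{\mathrm{int}}>0$. Here I would invoke non-triviality, $f_d(\text{div})>1$, together with the fact that a triangulation of the connected $d$-ball is a strongly connected pseudomanifold: any two facets are linked by a chain of facets meeting consecutively along interior ridges. With at least two facets present, such a chain must use at least one interior ridge, so $R_{\mathrm{int}}\ge 1$ and therefore $f_d(\text{div})>f_{d-1}(\text{div})$, that is $q_d(\text{div})<1$. I expect the main obstacle to be exactly this last topological input: establishing strong connectivity of $\text{div}\Delta_d$ (equivalently, ruling out that two facets sharing only a codimension-$\ge 2$ face could exhaust a neighbourhood of an interior point). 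For $d=1$ this is immediate, as $\text{div}\Delta_1$ is just a path of edges; for $d\ge 2$ it is the standard strong-connectivity property of triangulated manifolds, which I would either cite directly or derive from the local half-ball/ball structure of $|\Delta_d|$ at relative-interior points of ridges.
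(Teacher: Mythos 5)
Your proposal is correct, but it takes a genuinely different route from the paper's. The paper argues by contradiction: assuming $f_d(\text{div}) \le f_{d-1}(\text{div})$, it observes that each boundary ridge $\tau \in F_{d-1}(\text{div}_{\Delta_d}\sigma_i)$ lies in a unique facet $\sigma_\tau$ of $\text{div}\Delta_d$, that for fixed $i$ the assignment $\tau\mapsto\sigma_\tau$ is injective (a geometric $d$-simplex cannot contain two of its $(d-1)$-faces in the same $\sigma_i$, by affine independence of its vertices), and hence that equality $f_d(\text{div}) = f_{d-1}(\text{div})$ must hold; a counting of matched facets then forces every facet to carry one ridge in \emph{every} $\sigma_i$, i.e.\ $\partial\sigma\cap\mathring{\Delta}_d = \emptyset$, which only the full simplex satisfies --- contradicting non-triviality. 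Your double count of facet--ridge incidences replaces this matching argument and buys something the paper's proof does not give: the exact identity $(d+1)\bigl(f_d(\text{div}) - f_{d-1}(\text{div})\bigr) = 2R_{\mathrm{int}}$, from which the weak inequality is immediate and which quantifies the deficit. Both arguments rest on the same pseudomanifold-with-boundary input (boundary ridges lie in exactly one facet, interior ridges in exactly two), which the paper itself also invokes in the proof of Proposition \ref{prop:convergence}, so that is not an extra cost. The real difference is the strictness step: you import strong connectivity of ball triangulations, an additional (if standard) topological fact, whereas the paper stays elementary via the affine-independence observation. You could remove that dependency by reusing the paper's observation inside your framework: if $R_{\mathrm{int}} = 0$, then every facet has all $d+1$ of its ridges on $\partial\Delta_d$, at most one per $\sigma_i$ by affine independence, hence exactly one per $\sigma_i$; then $\partial\sigma\cap\mathring{\Delta}_d=\emptyset$ forces $\sigma = \Delta_d$ and $f_d(\text{div}) = 1$, contradicting non-triviality. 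That hybrid keeps your quantitative identity while matching the self-containedness of the paper's argument.
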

\begin{proof}
Assume that $F_d(\text{div}\Delta_d)$ has less or equal the amount of elements of $F_{d-1}(\text{div}\Delta_{d-1})$. Recall that $\text{div}\Delta_d$ is a subdivision of $\Delta_d$ the standard $d$-simplex with boundary $F_{d-1}(\Delta_d) = \{\sigma_0,...,\sigma_d\}$. By definition every restriction $\text{div}_{\Delta_d}(\sigma_i)$ of $\text{div}\Delta_d$ onto $\sigma_i$ results in a complex isomorphic to $\text{div}\Delta_{d-1}$. But by definition every $\tau\in F_{d-1}(\text{div}_{\Delta_d}\sigma_i)$ is contained in $\sigma_i$ and thus must be contained in the boundary of some $d$-face of $\text{div} \Delta_d$ as this complex is homeomorphic to a $d$-ball. In particular every facet $\tau$ of $\text{div}_{\Delta_d}\sigma_i$ is contained in a unique facet $\sigma_\tau\in \text{div}\Delta_d$. Obviously the strict inequality is thus false and equality would need to hold.
Assume thus that $f_d(\text{div}\Delta_d) = f_{d-1}(\text{div}\Delta_{d-1})$.
Note that it is impossible for $\sigma\in F_d(\text{div}\Delta_d)$ to contain two $(d-1)$-faces in the same $\sigma_i$. This is immediate as by definition a inclusion-uniform subdivision has to be geometric and thus every face $\sigma\in F_{d}(\text{div}\Delta_d)$ has to be spanned by $(d+1)$ affinely independent points. In case two codimension-$1$-faces of $\sigma$ are contained in the same $\sigma_i$ all vertices of $\sigma$ would be contained in $(d-1)$-dimensional convex hull. A contradiction.

Thus it immediately follows from the above that
$$\# \underbrace{\{\sigma_\tau ~|~ i\in\{0,...,d\}, ~ \tau \in F_{d-1}(\text{div}_{\Delta_d}\sigma_i)\}}_{=: M \subseteq F_d(\text{div}\Delta_d)} = f_{d-1}(\text{div}\Delta_{d-1})$$
because if we had a single unmatched simplex $\sigma\in F_d(\text{div}\Delta_d) \setminus M$ we had
$$f_{d-1}(\text{div}\Delta_{d-1}) \leq \# M < f_d(\text{div}\Delta_d)$$
which contradicts the equality we assumed. Further since it is impossible for $\sigma\in F_d(\text{div}\Delta_d)$ to contain two $(d-1)$-faces in the same $\sigma_i$ every $\sigma\in F_d(\text{div}\Delta_d)$ needs to be matched by faces $\tau_i\in F_{d-1}(\text{div}_{\Delta_d}\sigma_i)$, $i=0,...,d$. However, the only $d$-simplex $\sigma\subset\Delta_d$ sufficing $\partial\sigma\cap\mathring{\Delta}_d = \emptyset$ is the full simplex itself. Thus $\text{div}\Delta_d \cong \Delta_d$ and the subdivision is trivial. A contradiction to non-triviality of $\text{div}$.
\end{proof}

\section{Universal Limits of Cone Subdivision}\label{sec:cone}
The following section is devoted to the calculation of an explicit universal limit of an example of finite ramification, i.e. a inclusion-uniform subdivision $\text{div}$ such that $f_{d-1}(\text{div}) = 1$. This property will prove to be convenient in the application of the following method since self-similarity will appear only in one block of our target matrix.

Let $d$ be a given dimension. In the following we calculate the renormalization map for the Cone subdivision which is a special case of \textit{finitely ramified} subdivisions.

Let $K$ be a simplicial complex and for every $\sigma\in F_d(K)$ let $v_\sigma$ denote its barycenter.
The cone subdivision $\text{cd}K$ of $K$ is given by adding to $K^{(d-1)}$ the cone $v_\sigma\ast \partial\sigma$ for every $\sigma\in F_d(K)$. Here $K^{(d-1)}$ denotes the $(d-1)$-skeleton of $K$.

\begin{theorem}\label{th:conespectrum}
Let $d > 1$ and $\mathcal{P}_i$ and $\mathcal{Q}_i$ be the sequences recursively obtained as
$$\mathcal{P}_i := f^{-i}(d+1), \quad \quad \mathcal{Q}_i := f^{-i}(d+3)$$
for the polynomial
$$f(\zeta) = \zeta(d+3-\zeta).$$
Then $\{\mathcal{P}_i, \mathcal{Q}_i ~|~ i\in\mathbb{N}\}$ are mutually disjoint and the universal limit $\Lambda^{\text{\normalfont (cd)}}_d$ is the unique increasing step function on $[0,1]$ attaining values in
$$\bigcup_{i=0}^\infty \mathcal{P}_i\cup\bigcup_{j=0}^\infty \mathcal{Q}_j$$
such that $x\in\mathcal{P}_i\cup\mathcal{Q}_{i}$ is attained on an interval of length
$$\frac{d-1}{2(d+1)^{i+1}}.$$
\end{theorem}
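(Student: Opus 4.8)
The plan is to combine the Universal Limit Theorem with an explicit spectral analysis of the self-similar dual-graph sequence attached to $\text{cd}$. First I would invoke Theorem~\ref{th:universality}: since $\text{cd}$ is inclusion-uniform and non-trivial (here $f_d(\text{cd}) = d+1 > 1$ and, because $\text{cd}$ retains the $(d-1)$-skeleton, $f_{d-1}(\text{cd}) = 1$, i.e. $\text{cd}$ is finitely ramified with $q_d(\text{cd}) = \tfrac{1}{d+1}$), it suffices to determine the limit for the single initial complex $K = \Delta_d$. I would then pass to graph spectra via $\mathcal{L}(\text{cd}^n\Delta_d) = \mathcal{L}(\Gamma^{(d)}(\text{cd}^n\Delta_d), \sigma)$. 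Writing $G_n := \Gamma^{(d)}(\text{cd}^n\Delta_d)$, this operator has \emph{constant} diagonal $d+1$ (every $d$-face has $d+1$ facets) and $\pm 1$ off-diagonal entries on down-adjacent facets. The structural input is finite ramification: since $(d-1)$-faces are never subdivided, interfaces between adjacent cells remain single faces, so $G_{n+1}$ is obtained by gluing $d+1$ isomorphic copies of $G_n$ along single junction edges. Each $G_n$ carries $d+1$ distinguished corner vertices (the top faces meeting $\partial\Delta_d$) permuted by an $S_{d+1}$-symmetry, and $G_1 = K_{d+1}$; this is the Schreier-graph-on-the-rooted-$(d+1)$-regular-tree picture of the introduction.

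The heart of the argument is a spectral-decimation (renormalization) identity for this tower. I would write $\mathcal{L}(G_{n+1}) - \mu I$ in block form according to the splitting of $V(G_{n+1})$ into the $d+1$ cells and, inside each cell, into corners versus junction/interior vertices, and compute the Schur complement onto the corner set with Lemma~\ref{lemm:SR}. Because the constant diagonal means the gluing adds no diagonal mass, the inter-cell coupling is a sum of rank-one terms, which I would resolve with the Sherman-Morrison-Woodbury formula (Lemma~\ref{lemm:SMW}) and the Matrix Determinant Lemma (Lemma~\ref{lemm:MDL}). Using the $S_{d+1}$-symmetry, the corner Schur complement takes the form $\alpha_n(\mu)I + \beta_n(\mu)(J-I)$, so the whole recursion collapses to a scalar renormalization. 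The goal is to show this renormalization is governed by $f(\zeta) = \zeta(d+3-\zeta)$ and degenerates exactly at the two values $d+1$ and $d+3$, which then play the role of the exceptional (localized) eigenvalues; I expect the output to be a determinant relation of the shape $\det(\mathcal{L}(G_{n+1}) - \mu I) = \Phi(\mu)\,\det(\mathcal{L}(G_n) - f(\mu)I)$ up to explicitly controlled localized factors.

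With the decimation in hand I would track multiplicities. Each of the $(d+1)^n$ level-$n$ cells contributes $d+1$ dual vertices, of which the degree-two map $f$ accounts for two (its two preimages of each previous eigenvalue), leaving $d-1$ per cell carried by localized eigenfunctions that vanish on the corners and extend by zero. These localized modes sit at the two exceptional values $d+1$ and $d+3$, whose symmetric roles (two distinct localized symmetry types at a junction) force their total multiplicities to be asymptotically equal. A localized mode born at level $m$ is pulled back by $f^{-1}$ at every later level, populating $\mathcal{P}_{n-m}$ and $\mathcal{Q}_{n-m}$ while its multiplicity grows with the branching; solving the recursion yields relative multiplicity $\frac{d-1}{2(d+1)^{i+1}}$ for each of the $2^{i+1}$ points of $\mathcal{P}_i\cup\mathcal{Q}_i$ in the limit. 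As a consistency check the total mass is $\sum_{i\ge 0} 2^{i+1}\cdot\frac{d-1}{2(d+1)^{i+1}} = \frac{d-1}{d+1}\sum_{i\ge0}\big(\tfrac{2}{d+1}\big)^i = 1$, so the prescribed step function is well defined on $[0,1]$; mutual disjointness of the $\mathcal{P}_i,\mathcal{Q}_j$ would be read off from the dynamics of $f$ (using $f(d+3)=0$, $f(d+1)=2(d+1)$, so that the backward orbits of $d+1$ and $d+3$ never collide). The final $L^1$-convergence $\Lambda_n\to\Lambda_d^{\text{(cd)}}$ then follows by feeding these multiplicities into Corollary~\ref{cor:wielandt}, as in Section~\ref{sec:universality}.

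The main obstacle is the decimation step itself: turning the renormalization into an \emph{exact} identity requires careful control of the exceptional set (the poles of $f$ and the interior eigenvalues at which the Schur complement is undefined) and an honest count of where $d+1$ and $d+3$ first appear and how their multiplicities propagate — this is where the orientation-induced signs $\sigma$ and the parity of $d$ enter, and where the three matrix lemmas do the real work. The secondary difficulty is showing that the non-decimable part of the spectrum — the accumulation/Julia-set part together with the $o((d+1)^n)$ symmetric ``radial'' modes coming from the $S_{d+1}$-quotient — is asymptotically negligible in $L^1$, so that the atomic measure assembled from the $\mathcal{P}_i$ and $\mathcal{Q}_i$ is indeed the entire limit.
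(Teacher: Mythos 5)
Your overall route --- reduce to $K=\Delta_d$ via Theorem \ref{th:universality}, pass to the self-similar dual-graph/Schreier picture, decimate by Schur complements (Lemmas \ref{lemm:SMW}, \ref{lemm:MDL}, \ref{lemm:SR}) exploiting the $S_{d+1}$-symmetry, translate between Laplacian and adjacency spectra by $\mathcal{L}_n=(d+1)I-\Xi_n$, and obtain disjointness from the dynamics of $f$ --- is indeed the paper's route, and your total-mass check $\sum_i 2^{i+1}\tfrac{d-1}{2(d+1)^{i+1}}=1$ is sound. The genuine gap sits in the step you yourself flag as the heart of the matter: the decimation identity. A relation of the shape $\det(\mathcal{L}(G_{n+1})-\mu I)=\Phi(\mu)\det(\mathcal{L}(G_n)-f(\mu)I)$ cannot hold, because eliminating the non-self-similar vertices renormalizes the off-diagonal coupling together with the diagonal: the Schur complement is not a scalar translate of the previous-level operator but an element of a two-parameter pencil. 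This is exactly why the paper introduces the bidimensional auxiliary determinant $D_n(\mu,\lambda)=\det\bigl(\lambda J_n+b_n-(\lambda+\mu){\bf 1}_{(d+1)^n}\bigr)$ and derives a two-dimensional renormalization $F:(\mu,\lambda)\mapsto(\mu',\lambda')$ (Corollary \ref{cor:bidim_spectral}); the quadratic map enters only because $F$ is semi-conjugate to it under $\Psi(\mu,\lambda)=\Phi(\mu,\lambda)/\lambda$, and the actual spectrum is recovered by factoring $D_n$ into the hyperbolae $\Phi_\theta$ (Proposition \ref{prop:factorcone}) and slicing at $\lambda=1$. You do write the corner complement as $\alpha_n(\mu)I+\beta_n(\mu)(J-I)$, so you implicitly carry two parameters, but the assertion that this ``collapses to a scalar renormalization'' is precisely the missing semi-conjugacy argument; it is not a consequence of symmetry.

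Two further steps are asserted rather than proved. First, the multiplicity bookkeeping: the limiting step length $\frac{d-1}{2(d+1)^{i+1}}$ and the asymptotically equal masses at $\mathcal{P}_i$ and $\mathcal{Q}_i$ do not follow from the heuristic ``two preimages plus $d-1$ localized modes per cell''; in the paper they come out of an intertwined induction on the exponents $\alpha_n,\beta_n,\sigma_n$ (Proposition \ref{prop:factorcone}, Lemma \ref{lemm:sequences}), and the induction closes only because the factors $L$ and $K$ of $\det X_0$ cancel exactly under renormalization --- a computation your sketch does not guarantee. Second, the final $L^1$-convergence $\Lambda_n\to\Lambda^{\text{(cd)}}_d$ cannot be obtained by ``feeding multiplicities into Corollary \ref{cor:wielandt}'': that corollary compares two Hermitian matrices of the same size, whereas the limit is not a matrix spectrum. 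The paper instead proves convergence by a direct comparison of step functions, truncating the tail values $\bigcup_{i\geq n}\mathcal{A}_i\cup\bigcup_{i\geq n-1}\mathcal{B}_i$ (of total measure $O(2^n/(d+1)^n)$) and bounding the $L^1$-distance through the discrepancies of individual step lengths; Corollary \ref{cor:wielandt} is used only for the loop-removal approximation $\ell(G_n)/v(G_n)\to 0$.
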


\begin{figure}[h]
    \begin{subfigure}[t]{.3\textwidth}
          \centering
          \includegraphics[width=\linewidth]{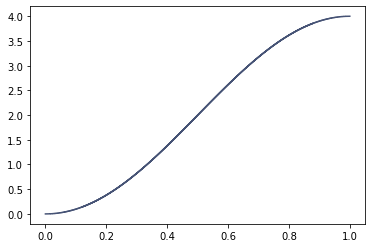}
          \caption{One dimensional limiting distribution. Note that as $\text{cd}$ coincides with the barycentric subdivision $\text{sd}$ for graphs, i.e. $d=1$, and the top-dimensional Laplacian is the $1$-down Laplacian in this case the limiting distribution $\Lambda_1^{\text{(cd)}}(x) = 4\sin^2(\pi x/2)$ as shown in \cite{knill}.}
    \end{subfigure}%
    \hfill
    \begin{subfigure}[t]{.3\textwidth}
          \centering
          \includegraphics[width=\linewidth]{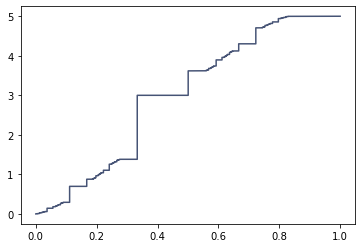}
          \caption{For the two-dimensional limiting distribution note that the continuity of the one-dimensional case does not hold anymore as $\Lambda_2^{\text{(cd)}}$ is a step function (Theorem \ref{th:conespectrum}). }
    \end{subfigure}%
    \hfill
    \begin{subfigure}[t]{.3\textwidth}
          \centering
          \includegraphics[width=\linewidth]{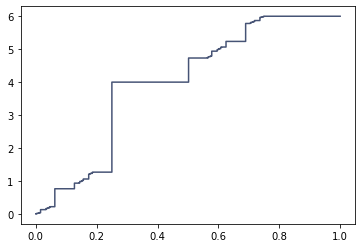}
          \caption{For $d=3$ and higher values of $d$ the steps of early eigenvalues tend to become larger while the decrease in step length of later eigenvalues enhances (cf. the step lengths of eigenvalues $d+1$ and $d-1$, i.e. $\mathcal{P}_0$ and $\mathcal{Q}_0$).}
    \end{subfigure}%
    \\
    \begin{subfigure}[t]{.3\textwidth}
          \centering
          \includegraphics[width=\linewidth]{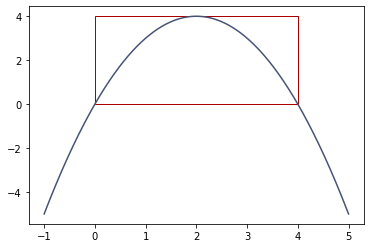}
          \caption{$f(\zeta) = \zeta(4-\zeta)$.}
    \end{subfigure}%
    \hfill
    \begin{subfigure}[t]{.3\textwidth}
          \centering
          \includegraphics[width=\linewidth]{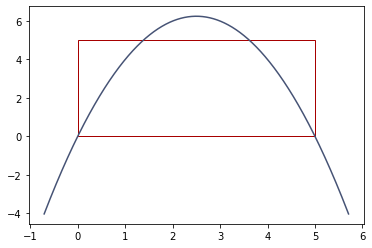}
          \caption{$f(\zeta) = \zeta(5-\zeta)$.}
    \end{subfigure}%
    \hfill
    \begin{subfigure}[t]{.3\textwidth}
          \centering
          \includegraphics[width=\linewidth]{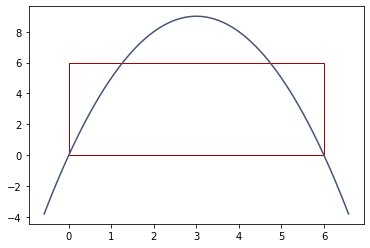}
          \caption{$f(\zeta) = \zeta(6-\zeta)$.}
    \end{subfigure}%
    \caption{Limiting distributions $\Lambda_d^{\text{(cd)}}$ for $d=1,2,3$. Beneath each limit there is a plot of the polynomial $f$ generating the self-similarity of the distributions. The red rectangle shows the range of the feasible values of elements in $\mathcal{P}_i$ and $\mathcal{Q}_i$.}
    \label{fig:limit_plot}
\end{figure}

Note that this theorem encodes information about spectral gaps of the limiting distributions (i.e. ranges in which the total number of eigenvalues vanishes compared to the total number of eigenvalues under $\text{cd}$). We can deduce such gaps from the polynomials $f(\zeta) = \zeta(d+3-\zeta)$ as plotted in Figure \ref{fig:limit_plot}. Note that values in the range $f^{-1}([0, d+3]^c)$ are never obtained as a preimage of a value in $\mathcal{P}_i$ or $\mathcal{Q}_i$ under $f$ since $\mathcal{P}_i\cup\mathcal{Q}_i \subset [0, d+3]$.
Thus whenever $f$ leaves the range $[0, d+3]$ inside the interval $[0, d+3]$ those values can't be obtained in recursion anymore. Same holds true for the complete backwards orbit of this range under $f$ thus inducing gaps in $\Lambda_d^{\text{(cd)}}$ for precisely these ranges.

We show Theorem \ref{th:conespectrum} by representing $\mathcal{L}_n$ (up to the degrees on the diagonal) as the adjacency operator on the $d$-dual graph of $\text{cd}^n \Delta_d$ in the following denoted by
$$\Gamma_n := \Gamma^{(d)}(\text{cd}^n \Delta_d).$$
Subsequently we approximate $\Gamma_n$ by a more convenient graph sequence to work with in terms of asymptotics.

\subsection{Schreier graph approximation of $\Gamma_n$}
Let $\Gamma_n$ denote the $d$-dual graph of $\text{cd}^n \Delta_d$ as above. In this section we will show in Proposition \ref{prop:approx} that it is isomorphic to a Schreier graph on the $n$-th level of an action of a particular self-similar group with a slight error. This error is introduced by the Schreier graph approximation; this is due to the fact that Schreier graphs are regular while $\Gamma_n$ has boundary nodes of degree $d$ though the other (interior) nodes have degree $d+1$. Thus in order to approximate $\Gamma_n$ by a sequence of Schreier graphs we introduce loops on the boundary to artificially make the graph $(d+1)$-regular.
Before we state and prove Proposition \ref{prop:approx} we will need a few definitions and constructions.

To this end we quickly introduce notions of self-similar groups as in \cite{grigorchuk3} and \cite{grigorchuk4}.
Our aim is to reformulate the setting by a group $G$ acting on a $k$-ary tree $\pazocal{T}$ so that the Schreier graph of $G$ on the $n$-th level of the tree is isomorphic to $\Gamma_n$. This will prove to be useful since it allows for a recursive block-description of the adjacency operator of $\Gamma_n$ in terms of a representation of the generators of $G$.

Since every $d$-facet of $K$ gets replaced by $(d+1)$ copies of a $d$-simplex under $\text{cd}$ the natural choice is $k = d+1$ and $\pazocal{T}$ is the tree with vertex set $X^\ast$, the words of finite length over the alphabet $X = [d+1]$, with root $\emptyset$ (the empty word) and adjacencies given by right-adjunction of a single symbol, i.e. the word $w$ has children of the form $wx$ for $x\in X$. We will further use the notation $X^\ast$ of the vertex set of $\pazocal{T}$ for $\pazocal{T}$ itself.
Note that by this definition the $n$-th level of $X^\ast$ is the set $X^n$ of words of length $n$ over $X$.

Now in order to obtain a self-similar Schreier graph sequence from $X^n$ we define what a self-similar group is by action on $X^\ast$. To this end consider the group $\text{Aut}(X^\ast)$ of all automorphisms of the $(d+1)$-ary tree $X^\ast$. Its elements are bijections of the set $X^\ast$ onto itself which fix the root $\emptyset$ and preserve adjacency relations. Note that for a vertex $v\in X^n$ on level $n$ the subtree $\pazocal{T}_v$ is isomorphic to $\pazocal{T}$ itself by the $n$-fold left-shift $w_1...w_k \mapsto w_{n+1}...w_k$.
Thus every automorphism $\varphi\in\text{Aut}(X^\ast)$ is given by a permutation $\sigma\in S_X$ of the first level $X^1 = X$ and a tuple of $(d+1)$ elements describing how $\varphi$ acts on the subtrees $\pazocal{T}_v \cong X^\ast$ for each $v\in X^1$, i.e.
$$(\varphi_1,...,\varphi_{d+1})\in \text{Aut}(X^\ast)^{d+1}.$$

We now say that a subgroup $G\leq \text{Aut}(X^\ast)$ is self-similar if for every $\varphi\in G$ the elements $\varphi_1,...,\varphi_{d+1}$ are themselves elements of $G$.

Having a self-similar group $G$ and a finite set of generators $S$ the sequence of Schreier graphs defined by $G$ (with respect to $S$) is given by $G_n := (X^n, E_n)$ where $E_n$ is defined over $S$ by
$$E_n := \{(w,s\cdot w) ~|~ w\in X^n, s\in S\}.$$
Note that in case $\{s^{-1} ~|~ s\in S\} = S$ we obtain an undirected graph. Also observe that if $S$ acts such that for every $w\in X^n$ and $s_1,s_2\in S$ from $s_1\cdot w = s_2\cdot w$ it follows that $s_1 = s_2$ the adjacency matrix of $G_n$ is given by
$$A(G_n) = \sum_{s\in S} \rho(s)$$
for the representation $\rho: G \rightarrow \text{GL}_{|X^n|}(\mathbb{C})$ defined by the action of $G$ on $X^n$ under some identification of $X^n$ with $[|X^n|]$, i.e. let $\iota: X^n \rightarrow [|X^n|]$ be a bijection, then for $\varphi\in G$ let $\rho(\varphi)\cdot e_{\iota(w)} = e_{\iota(\varphi\cdot w)}$. In particular every $\rho(\varphi)$ is a permutation matrix.

Since the graph $\Gamma_n$ to be approximated does not contain loops we introduce the notion of the \textit{reduced Schreier graph $\tilde{G}_n$ defined by $G$ (with respect to $S$)} as the graph $G_n$ with loops removed. We say that $G_n$ approximates a graph sequence $\Gamma_n$ if $\tilde{G}_n$ is isomorphic to $\Gamma_n$ and for $\ell(G_n)$ the number of loops of $G_n$ it holds that
$$\ell(G_n) \ll v(G_n),$$
i.e. $G_n$ is obtained (up to isomorphism) from $\Gamma_n$ by adding an asymptotically small number of loops. Note that the motivation for this notion of approximating sequences is due to Corollary \ref{cor:wielandt} since the addition of loops to $\Gamma_n$ corresponds to the addition or subtraction of $\ell(G_n)$-many ones along the diagonal of $A(\Gamma_n)$ or $\mathcal{L}(\Gamma_n)$, respectively. Thus
$$||\Lambda_{\mathcal{L}(\Gamma_n)} - \Lambda_{\mathcal{L}(G_n)}||_{L^1} \leq \frac{\ell(G_n)}{v(G_n)} \xrightarrow{n\rightarrow\infty} 0$$
so that if we want to describe $\Lambda^{\text{(cd)}}_d$ from Theorem \ref{th:universality} a spectral decimation of $G_n$ suffices which will be more convenient to work with in this manner.

We will now show that the sequence of graphs $\Gamma_n$ is approximated by the Schreier graph sequence $G_n$ generated by the action of the following group $G\leq \text{Aut}(X^\ast)$: First consider the cyclic permutation
$$\alpha = ((d+1)~d~(d-1)~...~2~1) \in S_X$$
and the automorphism $a$ applying $\alpha$ to the last letter of the given word, i.e.
$$a(wx) = w\alpha(x)$$
for $w\in X^{n-1}$, $x\in X$.
Note that $a$ is of order $d+1$ and consider the cyclic group $A$ generated by $a$. Its $n$-th level Schreier graph with respect to $S = \{a,a^2,...,a^d\}$ is the graph consisting of $(d+1)^{n-1}$ disjoint copies of $K_{d+1}$, one for each set of the form
$$\{wx ~|~ x\in X\}$$
with $w\in X^{n-1}$ fixed. The copies of $K_{d+1}$ here correspond to copies of the dual graph of $\text{cd}\Delta_d$. In order to model the adjacencies between these copies we need to introduce another group generator $b$.

Let $b$ be given by the following self-similar description
$$b(wx) = \begin{cases}
a^{d+1-x}(w)\cdot (d+1-x) &, ~ x\neq d+1\\
b(w)\cdot x &, ~ x = d+1
\end{cases}$$
and initial condition $b(i) = i$ for $i\in X$.
Here $\cdot$ denotes the concatenation of a word with a letter. Note that the initial condition includes loops in the Schreier graph $G_n$.

\begin{figure}
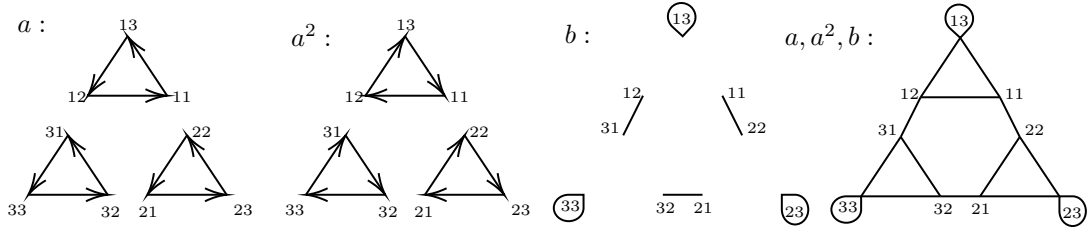

    \centering
    \include{tikz/schreier_decomp}
    \caption{Schreier graphs generated by the choices $\{a\}$, $\{a^2\}$, $\{b\}$ and $\{a,a^2,b\}$ of generators $S$ and the group $G$ generated by $S$.}
    \label{fig:schreier_decomp}
\end{figure}

Let $G$ be the group generated by $a$ and $b$. In order to show that $G_n$ approximates $\Gamma_n$ we analyze the elementary cell of our subdivision sequence (cf. Figure \ref{fig:schreier_decomp} for the case $d=2$ and $n = 3$).
\begin{lemma}
$\Gamma_1 = \Gamma^{(d)}(\text{\normalfont cd}\Delta_d)$ is isomorphic to the complete graph $K_{d+1}$ where the vertices of $K_{d+1}$ are in bijection with the boundary faces $F_{d-1}(\Delta_d)$ over the map $\sigma\mapsto v\ast\sigma$ for $v$ being the barycenter of $\Delta_d$.
\end{lemma}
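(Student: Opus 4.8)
The plan is to unwind the definition of $\text{cd}$ on a single simplex and then read off both the vertices and the edges of the dual graph directly from the face poset of a cone. Since $\Delta_d$ has a unique $d$-face, namely itself, applying the cone subdivision gives $\text{cd}\Delta_d = v \ast \partial\Delta_d$, where $v$ is the barycenter of $\Delta_d$. Writing the vertices of $\Delta_d$ as $e_1,\ldots,e_{d+1}$ and letting $\sigma_i := \{e_1,\ldots,\widehat{e_i},\ldots,e_{d+1}\}$ denote the facet opposite $e_i$, the facets of $\partial\Delta_d$ are precisely $\sigma_1,\ldots,\sigma_{d+1}$, i.e. $F_{d-1}(\Delta_d)$. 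Coning with $v$ then shows that the top-dimensional faces of $\text{cd}\Delta_d$ are exactly the $d+1$ simplices $v\ast\sigma_i = \{v\}\cup\sigma_i$, since the $(d-1)$-skeleton $\Delta_d^{(d-1)}$ contributes no $d$-faces. This already exhibits the asserted bijection $\sigma\mapsto v\ast\sigma$ between $F_{d-1}(\Delta_d)$ and $F_d(\text{cd}\Delta_d)$, so $\Gamma_1$ has exactly $d+1$ vertices.

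It then remains to verify that every pair of distinct vertices is adjacent, i.e. that $v\ast\sigma_i$ and $v\ast\sigma_j$ are $d$-down neighbors for all $i\neq j$. First I would compute the vertex-set intersection of the two cones, $(v\ast\sigma_i)\cap(v\ast\sigma_j) = \{v\}\cup(\sigma_i\cap\sigma_j)$, and note that $\sigma_i\cap\sigma_j = \{e_1,\ldots,e_{d+1}\}\setminus\{e_i,e_j\}$ is a $(d-2)$-face on $d-1$ vertices. Hence $\{v\}\cup(\sigma_i\cap\sigma_j) = v\ast(\sigma_i\cap\sigma_j)$ is a simplex on $d$ vertices, obtained from $v\ast\sigma_i$ by deleting $e_j$ and from $v\ast\sigma_j$ by deleting $e_i$; it is therefore a genuine common $(d-1)$-face of both cones and lies in $F_{d-1}(\text{cd}\Delta_d)$. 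By the definition of $\Gamma^{(d)}$, the condition $(v\ast\sigma_i)\cap(v\ast\sigma_j)\in F_{d-1}$ is exactly an edge of $\Gamma_1$. As this holds for every unordered pair $\{i,j\}$, the graph $\Gamma_1$ is complete on its $d+1$ vertices, so $\Gamma_1\cong K_{d+1}$ under the stated identification.

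I do not expect a genuine obstacle, as the content is purely combinatorial bookkeeping of the face structure of a simplicial cone. The only two points meriting a line of care are the completeness of the list of top faces, namely that $\text{cd}$ contributes no $d$-faces beyond the cones $v\ast\sigma_i$, and, dually, that each pair $\{i,j\}$ yields exactly one shared $(d-1)$-face so that no multiple edges arise; both follow at once from the fact that the intersection of two faces of a simplicial complex is again a face and from the explicit vertex count above.
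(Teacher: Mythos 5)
Your proposal is correct and follows essentially the same route as the paper: identify $\text{cd}\Delta_d$ with the cone $v\ast\partial\Delta_d$, read off the bijection $\sigma\mapsto v\ast\sigma$ between $F_{d-1}(\Delta_d)$ and $F_d(\text{cd}\Delta_d)$, and exhibit for each pair $i\neq j$ the common $(d-1)$-face $v\ast(\sigma_i\cap\sigma_j)$. The only cosmetic difference is that you compute the intersection explicitly in coordinates, while the paper phrases the same fact via the "opposing vertex" of each boundary facet.
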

\begin{proof}
To this end note first that by definition every $d$-face of $\text{cd}\Delta_d$ shares a common $(d-1)$-face with every other $d$-face. This follows from the fact that $\text{cd}\Delta_d$ is defined as the cone over the boundary of the standard-$d$-simplex,
$$\text{cd}\Delta_d = v\ast \partial\Delta_d$$
with $v$ its barycenter. Note that every facet $\sigma\in F_{d-1}(\partial\Delta_d)$ thus corresponds to the unique facet $v\cup\sigma\in F_d(\text{cd}\Delta_d)$ by definition of the cone complex. This correspondence is bijective. Furthermore two facets $v\cup\sigma_1, v\cup\sigma_2\in F_d(\text{cd}\Delta_d)$ share a common $(d-1)$-face iff $\sigma_1$ and $\sigma_2$ share a common $(d-2)$-face. But now every two $(d-1)$-faces of $\partial\Delta_d$ share a common $(d-2)$-face. This is due to the fact that every facet $\sigma\in F_{d-1}(\partial\Delta_d)$ has exactly one opposing vertex $w_\sigma$. Every other facet $\tau$ of $\partial\Delta_d$ can then be obtained as
$$w_\sigma\cup(\sigma\setminus\{w_{\tau}\}).$$
Note that the common $(d-2)$-face of $\tau$ and $\sigma$ then is
$$\sigma\setminus\{w_{\tau}\}.$$
\end{proof}

We will now define a bijection $F_d(\text{cd}^n\Delta_d) \cong X^n$ which will turn out to be a graph isomorphism of $\Gamma_n$ and $\tilde{G}_n$. This bijection can be thought of as an addressing scheme or a labeling of the facets of $\text{cd}^n\Delta_d$.

Obviously the only facet of $\Delta_d = \text{cd}^0\Delta_d$ gets mapped to the empty word $\emptyset$.
Next choose an arbitrary labeling of $F_d(\text{cd}\Delta_d) \cong X$.
Let the labeling $\varphi_{n-1}$ for $\text{cd}^{n-1}\Delta_d$ be defined; let $s: F_d(\text{cd}^n \Delta_d) \rightarrow F_d(\text{cd}^{n-1}\Delta_d)$ be the subdivision map restricted to $d$-faces. Note that under $\text{cd}$ every $\nu\in F_d(\text{cd}^{n-1}\Delta_d)$ gets replaced by $d+1$ new $d$-facets of the form
$$v_\nu\ast \sigma$$
for $\sigma\in F_{d-1}(\nu)$. Further let $p$ denote the parental map on level $n$ in $X^\ast$, i.e.
$$p: X^n \rightarrow X^{n-1}; ~ wx \mapsto w.$$
Given $\tau\in F_d(\text{cd}^n\Delta_d)$ we will define $\varphi_n: F_d(\text{cd}^n\Delta_d) \rightarrow X^n$ such that
\begin{equation} \label{eq:varphi}
p\circ\varphi_n = \varphi_{n-1}\circ s,
\end{equation}
i.e. the $d+1$ children of $\varphi_{n-1}(\nu)$ in $X^\ast$ are identified with the $d+1$ facets added for $\nu\in F_d(\text{cd}^{n-1}\Delta_d)$.
Thus in order to define $\varphi_n$ it suffices to give a bijective map $i_\nu: s^{-1}(\nu) \rightarrow X$. Consider $v_\nu\ast\sigma\in s^{-1}(\nu)$, i.e. $\sigma\in F_{d-1}(\nu)$, then we define $i_\nu$ depending on a variety of cases for $\sigma$:
\begin{itemize}
    \item In case $\sigma$ is boundary, i.e. $\sigma$ has no cofaces besides $\nu$, we set $i_\nu(v_\nu\ast\sigma) = d+1$.
    
    \item Otherwise $\sigma$ has another unique coface $\nu'\in F_d(\text{cd}^{n-1}\Delta_d)$, $\nu'\neq \nu$. Then we have another two cases;
    \begin{itemize}
        \item Either $p\circ\varphi_{n-1}(\nu') = p\circ \varphi_{n-1}(\nu)$ then by equation (\ref{eq:varphi}) there exists $\tau\in F_d(\text{cd}^{n-2}\Delta_d)$ such that
        $$s(\nu) = s(\nu') = \tau.$$
        
        Let $\ell \in \{1,...,d\}$ such that
        $$i_\tau(\nu') \equiv i_\tau(\nu) + \ell ~ (\text{mod}~d+1)$$
        then set
        $$i_\nu(v_\nu\ast\sigma) = \ell.$$
        
        \item or $p\circ \varphi_{n-1}(\nu') \neq p\circ \varphi_{n-1}(\nu)$ then let $i_\nu(v_\nu\ast\sigma) = d+1$.
    \end{itemize}
\end{itemize}
Note that this definition of $i_\nu$ is a well-defined bijection because there is always only one outwards pointing face of every facet, i.e. a face which is either boundary or has another coface which is not a child node of a common facet in $\text{cd}^{n-2}\Delta_d$.
Furthermore when assuming $\nu$ fixed every facet $\nu'$ which shares a $(d-1)$-face with $\nu$ which is not outwards pointing (i.e. $s(\nu) = s(\nu')$) defines a unique value of $\ell$ since $i_\tau$ is a bijection.

\begin{figure}
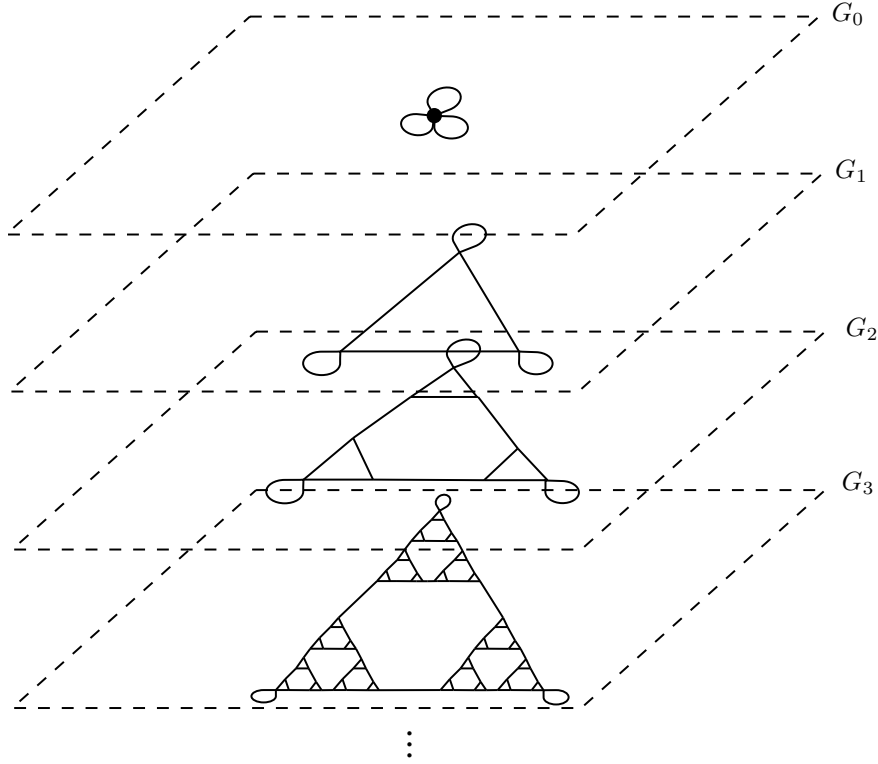

    \centering
    \include{tikz/levels}
    \caption{The Schreier graph approximation $G_n$ for $n \in \{0,1,2\}$ for $d = 2$. Note the structure of the ternary tree indicated by the positions of the triangles $K_3$ under every node of one layer above.}
\end{figure}

\begin{proposition}\label{prop:approx}
$\varphi_n$ defines an isomorphism of the graphs $\Gamma_n$ and $\tilde{G}_n$. Furthermore $G_n$ has $d+1$ loops, i.e. $G_n$ approximates $\Gamma_n$.
\end{proposition}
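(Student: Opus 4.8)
The plan is to prove by induction on $n$ the stronger statement that $\varphi_n$ is a graph isomorphism $\Gamma_n\to\tilde{G}_n$ carrying \emph{internal} adjacencies, i.e. pairs of facets $\tau,\tau'$ with $s(\tau)=s(\tau')$, onto $a$-power edges, and \emph{external} adjacencies, i.e. pairs sharing their outward-pointing $(d-1)$-face, onto $b$-edges. The base case $n=1$ is the preceding lemma: $\Gamma_1\cong K_{d+1}$, the generators $\{a,\dots,a^d\}$ realize $K_{d+1}$ on $X$, and by the initial condition $b(i)=i$ the generator $b$ contributes only loops, so $\tilde{G}_1\cong K_{d+1}\cong\Gamma_1$ and $G_1$ has exactly the $d+1$ loops coming from $b$.

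For the inductive step I would first describe $\Gamma_n$ through $\Gamma_{n-1}$. Each $\nu\in F_d(\text{cd}^{n-1}\Delta_d)$ is replaced by its block $s^{-1}(\nu)$, which is the $d$-dual graph of the cone subdivision of the simplex $\nu$ and hence a copy of $K_{d+1}$ by the lemma; because the barycenters $v_\nu$ are vertices created only at level $n$, two facets in distinct blocks $s^{-1}(\nu)$ and $s^{-1}(\nu')$ can meet at most along their common outward face, so an external edge occurs exactly when $\nu\cap\nu'\in F_{d-1}$, that is precisely over the edges of $\Gamma_{n-1}$. Since $p\circ\varphi_n=\varphi_{n-1}\circ s$ and each $i_\nu$ is a bijection, $\varphi_n$ maps $s^{-1}(\nu)$ bijectively onto the block $\{\varphi_{n-1}(\nu)x : x\in X\}$; the internal edges therefore match the $a$-power edges at once, since within one block all pairs are adjacent in both graphs and $\{a,\dots,a^d\}$ realizes the complete graph on the last letter.

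The core is the external edges, which I would analyse according to whether the two facets $\nu,\nu'$ of $\Gamma_{n-1}$ producing them are siblings. If they are not siblings, both adjacent children receive the last letter $d+1$, so $\varphi_n(\tau)=w(d+1)$ and $\varphi_n(\tau')=w'(d+1)$ with $w=\varphi_{n-1}(\nu)$, $w'=\varphi_{n-1}(\nu')$; the inductive hypothesis makes $\{\nu,\nu'\}$ an external edge one level down, so $b(w)=w'$, and the recursion $b(w(d+1))=b(w)(d+1)$ gives $b(\varphi_n(\tau))=\varphi_n(\tau')$. If $\nu,\nu'$ are siblings with common parent $\mu=s(\nu)=s(\nu')$, then $w=u\cdot i_\mu(\nu)$ and $w'=u\cdot i_\mu(\nu')$ with $u=\varphi_{n-2}(\mu)$, the shared face carries labels $\ell\equiv i_\mu(\nu')-i_\mu(\nu)$ and $\ell'\equiv i_\mu(\nu)-i_\mu(\nu')\pmod{d+1}$, whence $\ell'=d+1-\ell$; writing $W=\varphi_n(\tau)=w\ell$ and using $\alpha^m(k)\equiv k-m\pmod{d+1}$ I would compute $b(W)=a^{d+1-\ell}(w)\cdot(d+1-\ell)=u\cdot\alpha^{d+1-\ell}(i_\mu(\nu))\cdot(d+1-\ell)=u\cdot i_\mu(\nu')\cdot\ell'=\varphi_n(\tau')$. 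I expect this sibling case to be the main obstacle: it is exactly where the prefix carry $a^{d+1-\ell}$ applied by $b$ must reproduce the relabelling that the corresponding $a$-power edge undergoes one level below, so the cyclic bookkeeping through $\alpha$ and the definition of $\ell$ have to be matched precisely.

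To finish I would verify that $b$ is an involution, so that $S=S^{-1}$ and $\tilde{G}_n$ is genuinely undirected; this yields the converse, since every non-loop $a$-power edge stays inside a block and is internal, while every non-loop $b$-edge changes the prefix and, read backwards through the two computations above, arises from an external adjacency — as $\varphi_n$ is a vertex bijection this forces a graph isomorphism. For the loop count, no $a^k$ with $1\le k\le d$ fixes a word, so loops stem only from $b$, and $b(W)=W$ forces the last letter to equal $d+1$ with $b$ fixing the shortened prefix; by induction the words fixed by $b$ are exactly $\{x(d+1)^{n-1}:x\in X\}$, the $d+1$ facets whose outward face lies on $\partial\Delta_d$. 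Hence $\ell(G_n)=d+1\ll(d+1)^n=v(G_n)$, so $G_n$ approximates $\Gamma_n$.
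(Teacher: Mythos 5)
Your proposal is correct and follows essentially the same route as the paper's proof: induction on $n$ with the base case $\Gamma_1\cong K_{d+1}$, internal (sibling) adjacencies matched to $a$-power edges, external adjacencies split into the sibling case (the $a^{d+1-\ell}$ prefix computation with $\alpha^m(k)\equiv k-m \pmod{d+1}$) and the non-sibling case (last letter $d+1$, invoking the induction hypothesis $b(\varphi_{n-1}(\nu))=\varphi_{n-1}(\nu')$), and the $d+1$ loops identified with the words $x(d+1)^{n-1}$ coming from boundary facets. Your explicit strengthening of the induction hypothesis and the verification that $b$ is an involution are only minor tightenings of what the paper uses implicitly.
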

\begin{proof}
We already know that the map $\varphi_n: F_d(\text{cd}\Delta_d) \rightarrow X^n$ is a bijection. Note that the respective sets are the vertex sets of $\Gamma_n$ and $G_n$, respectively.

Thus in order to obtain an isomorphism we have the show that the edges are in bijection over $\varphi_n$ aswell.

We proceed by induction. For $n = 1$ the claim is obviously true: $\{a,...,a^d\}$ introduces the complete $K_{d+1}\cong \Gamma_1$ in $G_1$ and $b$ acts trivially on $X$ - thus introducing a loop on every vertex in $G_1$. In particular $\varphi_1$ introduces an isomorphism between $\tilde{G}_1$ and $\Gamma_1$.

Now we will show that every edge in $\Gamma_n$ corresponds to the application of $b$ or a power of $a$ on the right-hand side under $\varphi_n$ (up to loops resulting from application of $b$). Note that the edges of $G_n$ are precisely the edges of this form.
Let $\tau\in F_d(\text{cd}^n \Delta_d)$ be given and let
$$\nu := s(\tau)$$
aswell as
$$\tau = v_\nu\ast\sigma$$
for some $\sigma\in F_{d-1}(\nu)$.
Note that by this as mentioned above $\tau$ shares a common $(d-1)$-face with every other face $\tau'\in s^{-1}(\tau)$ of the form
$$\tau' = v_\nu\ast\sigma'$$
for $\sigma'\in F_{d-1}(\nu)$.
Let $i_\nu(\tau)$ and $i_\nu(\tau')$ be as above so that
$$\varphi_n(\tau) = \varphi_{n-1}(\nu)\cdot i_\nu(\tau)$$
and
$$\varphi_n(\tau') = \varphi_{n-1}(\nu)\cdot i_\nu(\tau').$$
Further let $\ell$ be such that
$$i_\nu(\tau') \equiv i_\nu(\tau) + \ell ~(\text{mod} ~ d+1)$$
then by definition of $a$ it is immediate that
$$a^\ell(\varphi_n(\tau)) = \varphi_{n-1}(\nu)\cdot \alpha^\ell(i_\nu(\tau)) = \varphi_{n-1}(\nu)\cdot i_\nu(\tau') = \varphi_n(\tau').$$
Thus the edge
$$(\varphi_n(\tau), \varphi_n(\tau'))$$
is contained in $G_n$ for every $\tau'$. Note also that since for fixed $\tau$ every value of $\ell\in \{1,...,d\}$ occurs for $\tau'$ and thus all edges introduced by action of $a$ in $G_n$ are of this form.

Thus the only other edge incident to $\varphi_n(\tau)$ in $G_n$ is the edge $$(\varphi_n(\tau), b(\varphi_n(\tau))).$$

The only other $(d-1)$-face of $\tau$ which has a coface that is not interior to $\nu$ is $\sigma \leq \tau$. Note that $\sigma$ is itself a $(d-1)$-face of $\nu$ by definition. This $(d-1)$-face is either boundary in which case by definition
$$\varphi_n(\tau) = i(d+1)...(d+1)$$
for arbitrary $i\in X$ and thus $b$ acts on $\varphi_n(\tau)$ as
$$b(\varphi_n(\tau)) = b(i)(d+1)...(d+1)$$
with $b(i) = i$. Thus $b(\varphi_n(\tau)) = \varphi_n(\tau)$ and the corresponding edge in $G_n$ is the loop
$$(\varphi_n(\tau), \varphi_n(\tau))$$
on the boundary face. Note that there are $d+1$-many words of this form $i(d+1)...(d+1)$. Thus $d+1$ loops are included on the boundary faces; those loops are added to $\Gamma_n$ by the transition to $\tilde{\Gamma_n}$.

In case that there is another coface $\tau'$ of $\sigma$ in $\text{cd}^n\Delta_d$ we apply $b$ to $\varphi_n(\tau)$ and need to differentiate between cases in the definition of $b$:

In case $i_\nu(\tau) = (d+1)$ we have
$$b(\varphi_n(\tau)) = b(\varphi_{n-1}(\nu))(d+1)$$
Note by definition of $\varphi_n$ this case corresponds to the case where $\nu$ and $\nu' = s(\tau')$ are not interior to a common $d$-facet in $\text{cd}^{n-2}\Delta_d$. Obviously by symmetry of the fact that $\tau' = v_{\nu'}\ast \sigma$ and $\sigma$ being a face of $\nu'$ and $\nu$ not being interior to a common $d$-facet in $\text{cd}^{n-2}\Delta_d$ we obtain that $i_{\nu'}(\tau') = d+1$ and thus
$$\varphi_n(\tau') = \varphi_{n-1}(\nu')(d+1).$$
But now since $\nu$ and $\nu'$ are not interior to a common $d$-facet of $\text{cd}^{n-2}\Delta_d$ by the induction hypothesis we have
$$\varphi_{n-1}(\nu') = b(\varphi_{n-1}(\nu)$$
and in particular
$$b(\varphi_n(\tau)) = b(\varphi_{n-1}(\nu))\cdot (d+1) = \varphi_{n-1}(\nu')\cdot(d+1) = \varphi_n(\tau').$$
In particular the edge
$$(\varphi_n(\tau), \varphi_n(\tau'))$$
is in $G_n$ and obviously the corresponding edge $(\tau, \tau')$ is in $\Gamma_n$ as $\tau$ and $\tau'$ are $d$-down neighbors.

The last case is when $i_\nu(\tau)\neq d+1$. Again let $\nu' = s(\tau')$.
By definition of $\varphi_n$ we then have a $d$-facet $\mu\in\text{cd}^{n-2}\Delta_d$ such that $\nu$ and $\nu'$ are in the interior of $\mu$. In particular $i_\nu(\tau) = \ell$ where $\ell$ is the unique integer in $\{1,...,d\}$ such that
$$i_\mu(\nu') \equiv i_\mu(\nu) + \ell ~ (\text{mod} ~ d+1).$$
By symmetry of this equation we have
$$i_{\nu'}(\tau') = d+1-\ell$$
in particular. Application of $b$ gives us
$$b(\varphi_n(\tau)) = a^{d+1-\ell}(\varphi_{n-1}(\nu))(d+1-\ell)$$
it thus suffices that $a^{d+1-\ell}(\varphi_{n-1}(\nu)) = \varphi_{n-1}(\nu')$ in order to establish the claim.
This is obvious now; $a^{d+1-\ell}$ acts on $X^{n-1}$ by leaving the first $n-2$ letters fixed and sending the last letter $x$ to the unique representative in $\{1,...,d\}$ of
$$(x + \ell) + (d+1)\mathbb{Z};$$
in particular it sends $i_\mu(\nu)$ onto $i_\mu(\nu')$ and thus
$$a^{d+1-\ell}(\varphi_{n-1}(\nu)) = \varphi_{n-2}(\mu)\cdot i_\mu(\nu') = \varphi_{n-1}(\nu').$$
Thus
$$b(\varphi_n(\tau)) = \varphi_n(\tau')$$
and the edge
$$(\varphi_n(\tau), \varphi_n(\tau'))$$
is contained in $G_n$ as
$$(\varphi_n(\tau), b(\varphi_n(\tau)).$$

\end{proof}

Now we have described the sequence $\Gamma_n$ (up to loops) as a Schreier graph of a self-similar group acting on a $(d+1)$-ary tree in the sense of \cite{grigorchuk}. This viewpoint will be convenient since it gives immediate self-similar descriptions of the Laplacian operator in terms of representations of group elements in a matrix algebra of increasing order.

By all the above it follows that the adjacency matrix of $G_n$ has the form
$$\Xi_n := A(G_n) = \underbrace{\begin{pmatrix}
1 & 1 & ... & 1\\
1 & \ddots &  & \vdots\\
\vdots & & \ddots & 1\\
1 & \dots & 1 & 1
\end{pmatrix}}_{=: J_n} + \underbrace{\begin{pmatrix}
 & & a_{n-1} &\\
 &\reflectbox{$\ddots$} & &\\
a_{n-1}^d & & &\\
& & & b_{n-1}
\end{pmatrix}}_{=: b_n} - {\bf 1}_{(d+1)^n}$$
where $a_n\in M_{(d+1)^n}(\mathbb{C})$ is given as
$$a_n = a_0 \otimes {\bf 1}_{(d+1)^{n-1}}$$
and
$$a_0 := \begin{pmatrix}
0 & 1 & 0 & \dots & 0\\
\vdots & \ddots & \ddots & & \vdots\\
\vdots & & \ddots & \ddots & \vdots\\
0 & & & \ddots & 1 \\
1 & 0 & \dots & \dots & 0
\end{pmatrix}; ~ b_0 := 0 \in M_{d+1}(\mathbb{C});$$
though the initial condition $b_0$ of $b$ is irrelevant for the asymptotic distribution and thus we might also include loops by setting $b_0$ equal to the identity - obtaining the Schreier graph sequence for the hanoi tower group on $3$ pegs in case $d = 2$.

Note that $a_n$ and $b_n$ are the representations of the generators $a$ and $b$ in $\text{GL}_{(d+1)^n}(\mathbb{C})$ as described above. The block structure results from reverse lexicographic ordering, i.e. the $i$-th column and $i$-th row correspond to the words of the form $\ast...\ast i$.

Further we let
$$\Xi_n(\mu, \lambda) = \lambda J_n + b_n - (\lambda + \mu) 1_{(d+1)^n}$$
and
$$D_n(\mu, \lambda) = \det \Xi_n(\mu, \lambda).$$
In particular the map $\mu \mapsto D_n(\mu, 1)$ is the characteristic polynomial of the adjacency matrix $\Xi_n$.

Note that in order to apply Schur-Renormalization we need to determine the determinant of the $d\times d$ upper-left block of $\Xi_n$ which we will denote by $X$ in the following (we drop the subscript $n$ in order to maintain readability).

Note that we have
$$D_n(\mu, \lambda) = \det X \cdot \det(b - \mu {\bf 1}_{(d+1)^{n-1}} - \lambda^2 \Gamma_X(\mu, \lambda)),$$
where $\Gamma_X(\mu, \lambda)$ denotes the block-coronal of $X$ in this case, i.e.
$$\Gamma_X(\mu, \lambda) = \underline{1}_d^t \cdot X^{-1}\underline{1}_d,$$
where
$$\underline{1}_d = (\underbrace{1_{(d+1)^{n-1}},...,1_{(d+1)^{n-1}}}_{d-\text{times}})^t.$$
$n$ will always be inferrable from context.

In order to determine $\Gamma_X$ we will consider $X$ as a matrix over the algebra $\mathcal{A}_n \leq M_{(d+1)^n}(\mathbb{C})$ generated by $a_n$ - which in fact as the group algebra of $C_6$ is a commutative algebra. How this will help us becomes clear in the following sections.

The procedure applied here was developed by Grigorchuk et al. in order to calculate spectra of Schreier graphs associated to groups acting on $k$-ary trees, e.g. in \cite{grigorchuk,grigorchuk2}. We will use the same approach but from a different viewpoint as our starting point is not the group but rather the graph sequence in a self-similar sense.
It is important though that the sequence is representable as a Schreier graph sequence of some group action on the complete $k$-ary tree in order to determine the adjacency matrix in a simple manner.

\subsection{Some elementary properties of the algebra $\mathcal{A}$}
First note that the algebras $\mathcal{A}_n$ are all isomorphic to $\mathcal{A}_0$ via tensoring by $1_{(d+1)^{n-1}}$. Thus we will denote by $\mathcal{A}$ the generic group algebra of $C_6$ commonly realized by $\mathcal{A}_0$. The following results thus also hold in an analogous version over $\mathcal{A}_n$.

\begin{proposition}\label{prop:inversion}
Let $\mu,\lambda$ be given so that
$$x := \mu 1_{d+1} + \lambda\sum_{i=1}^d a^i \in \mathcal{A}$$
is non-singular, then
$$x^{-1} = \frac{1}{(\mu - \lambda)(\mu + d\lambda)}\Big((\mu + (d-1)\lambda) 1_{d+1}  - \lambda\sum_{i=1}^d a^i\Big).$$
\end{proposition}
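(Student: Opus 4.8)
The plan is to reduce everything to the two-dimensional commutative subalgebra of $\mathcal{A}$ spanned by the identity $\mathbf{1}_{d+1}$ and the ``row-sum'' element $J := \sum_{i=0}^{d} a^i$. First I would record the two structural identities that drive the computation. Since $a$ has order $d+1$, left multiplication by $a^i$ merely permutes the summands of $J$ cyclically, so $a^i J = J$ for every $i$ and hence
$$J^2 = \sum_{i=0}^d a^i J = (d+1)\,J.$$
Moreover $\sum_{i=1}^d a^i = J - \mathbf{1}_{d+1}$, so the element to be inverted rewrites as
$$x = \mu\,\mathbf{1}_{d+1} + \lambda\sum_{i=1}^d a^i = (\mu-\lambda)\,\mathbf{1}_{d+1} + \lambda J.$$
In particular $x$ already lies in $\mathrm{span}\{\mathbf{1}_{d+1}, J\}$, which by $J^2=(d+1)J$ is itself a subalgebra; so I expect $x^{-1}$ to live there too, which is what makes the two-parameter ansatz below legitimate.

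Next I would set $x^{-1} = \alpha\,\mathbf{1}_{d+1} + \beta J$ and impose $x\,x^{-1} = \mathbf{1}_{d+1}$. Expanding with $J^2 = (d+1)J$ gives
$$x\,x^{-1} = (\mu-\lambda)\alpha\,\mathbf{1}_{d+1} + \big[(\mu-\lambda)\beta + \lambda\alpha + (d+1)\lambda\beta\big] J,$$
so I must solve $(\mu-\lambda)\alpha = 1$ together with $(\mu-\lambda)\beta + \lambda\alpha + (d+1)\lambda\beta = 0$. The first equation gives $\alpha = 1/(\mu-\lambda)$; substituting into the second and collecting $\beta$ yields $\beta\,(\mu + d\lambda) = -\lambda\alpha$, i.e. $\beta = -\lambda/[(\mu-\lambda)(\mu+d\lambda)]$. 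Re-expressing $\alpha\,\mathbf{1}_{d+1} + \beta J$ through $J = \mathbf{1}_{d+1} + \sum_{i=1}^d a^i$ then reproduces the claimed closed form. Equivalently one can simply multiply the asserted inverse by $x$ and check that the coefficient of $J$ cancels; this verification route needs only the same two identities.

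The one genuinely substantive point, rather than an obstacle, is to reconcile the hypothesis that $x$ is non-singular with the denominators $(\mu-\lambda)$ and $(\mu+d\lambda)$ appearing above. Since $\mathcal{A}\cong\mathbb{C}[C_{d+1}]$ is semisimple, $x$ is invertible precisely when its images under all characters of $C_{d+1}$ are nonzero; concretely, the eigenvalues of $J$ are $d+1$ (once) and $0$ (with multiplicity $d$), so the eigenvalues of $x=(\mu-\lambda)\mathbf{1}_{d+1}+\lambda J$ are $\mu + d\lambda$ and $\mu - \lambda$. Hence $x$ is non-singular if and only if $(\mu-\lambda)(\mu+d\lambda)\neq 0$, which is exactly the condition under which the formula is defined. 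I would include this short remark so that the division in the final expression is justified by the standing hypothesis, after which the computation above completes the proof. Alternatively, Lemma~\ref{lemm:SMW} applied to the rank-one form $x = (\mu-\lambda)\mathbf{1}_{d+1} + \lambda\,\underline{1}\,\underline{1}^{t}$ (using $J=\underline{1}\,\underline{1}^{t}$ in the circulant realization of $\mathcal{A}_0$) yields the same result in one line.
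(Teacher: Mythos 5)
Your proof is correct, and it takes a mildly but genuinely different route from the paper's. Both arguments start from the same reparameterization $x = (\mu-\lambda)\mathbf{1}_{d+1} + \lambda J$ with $J = \underline{1}_{d+1}\underline{1}_{d+1}^t = \sum_{i=0}^d a^i$, but the paper then finishes in one step by invoking the Sherman--Morrison formula (Lemma \ref{lemm:SMW}) on this rank-one update, whereas you stay inside the two-dimensional subalgebra $\mathrm{span}\{\mathbf{1}_{d+1}, J\}$, use $J^2=(d+1)J$, and determine the inverse by solving the resulting $2\times 2$ linear system for the ansatz $\alpha\mathbf{1}_{d+1}+\beta J$. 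Your route is more self-contained (no external lemma needed) and, importantly, it supplies a point the paper leaves tacit: the paper's application of Sherman--Morrison requires $\mu-\lambda\neq 0$ and $1+\tfrac{(d+1)\lambda}{\mu-\lambda}\neq 0$, i.e. $\mu+d\lambda\neq 0$, and nothing in the paper's proof connects these conditions to the stated hypothesis that $x$ is non-singular. Your spectral observation --- that the eigenvalues of $x$ are exactly $\mu+d\lambda$ (once) and $\mu-\lambda$ (with multiplicity $d$), so non-singularity of $x$ is \emph{equivalent} to $(\mu-\lambda)(\mu+d\lambda)\neq 0$ --- closes that small gap and justifies every division that appears. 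The trade-off is only length: the paper's proof is two lines given Lemma \ref{lemm:SMW}, while yours is a short computation plus a remark; your closing comment correctly identifies the paper's argument as the one-line alternative.
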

\begin{proof}
We decompose $x$ as
$$x = (\mu - \lambda) 1_{d+1} + \lambda \underline{1}_{d+1}\cdot \underline{1}_{d+1}^t$$
and apply the Sherman-Morrison formula, Lemma \ref{lemm:SMW}, to yield
$$x^{-1} = \frac{1}{\mu - \lambda} 1_{d+1} - \frac{1}{(\mu - \lambda)^2} \frac{\lambda \underline{1}_{d+1}\cdot\underline{1}_{d+1}^t}{1 + \frac{\lambda}{\mu - \lambda}\underline{1}^t_{d+1}\underline{1}_{d+1}} = \frac{1}{\mu - \lambda} 1_{d+1} - \frac{\lambda}{(\mu -\lambda)(\mu + d\lambda)} \underline{1}_{d+1}\cdot\underline{1}_{d+1}^t.$$
In particular we have
$$x^{-1} = \frac{1}{(\mu - \lambda)(\mu + d\lambda)}\Big((\mu + (d-1)\lambda)1_{d+1} - \lambda \sum_{i=1}^d a^i\Big).$$
\end{proof}

In order to compute determinants of block matrices with blocks in $\mathcal{A}$ we might use the following result relating the usual determinant with the determinant defined in the same way over $\mathcal{A}$, i.e. for $A\in\mathcal{A}^{k\times k}$ let
$$\text{det}_{\mathcal{A}} A := \sum_{\sigma\in S_k} \text{sgn} \sigma A_{1\sigma(1)}...A_{k\sigma(k)} \in\mathcal{A},$$
where $A_{ij} \in \mathcal{A}$ is the block at index $(i,j)$ as usual.

\begin{proposition}[\cite{silvester}]
The usual determinant $\text{\normalfont det}$ factorizes over $\text{\normalfont det}_\mathcal{A}$, i.e. for any $k\times k$ block matrix $A\in\mathcal{A}^{k\times k}$ with blocks in the commutative matrix algebra $\mathcal{A}$ it holds that
$$\text{\normalfont det} A = \text{\normalfont det} ~ \text{\normalfont det}_\mathcal{A} A.$$
\end{proposition}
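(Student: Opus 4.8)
The plan is to exploit the commutativity of $\mathcal{A}$ in order to reduce the block determinant $\det_{\mathcal{A}}A$ to a product of ordinary scalar determinants, by simultaneously splitting every block. Here $\mathcal{A}$ is the group algebra of a finite cyclic group over $\mathbb{C}$, hence semisimple; concretely every element of $\mathcal{A}$ is a $\mathbb{C}$-linear combination of powers of the permutation matrix $a_0$, which are commuting and diagonalizable, and so are simultaneously diagonalizable. Thus there is a single $P\in\mathrm{GL}_m(\mathbb{C})$ (with $m$ the size of the matrices in $\mathcal{A}$) such that $P^{-1}xP$ is diagonal for every $x\in\mathcal{A}$. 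Writing $\chi_\ell(x)$ for the $\ell$-th diagonal entry of $P^{-1}xP$, each $\chi_\ell\colon\mathcal{A}\to\mathbb{C}$ is a unital $\mathbb{C}$-algebra homomorphism, because the $\ell$-th diagonal entry of a product (resp.\ sum) of simultaneously diagonalized matrices is the product (resp.\ sum) of their $\ell$-th diagonal entries.

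The key algebraic observation is that $\det_{\mathcal{A}}$ is given by the Leibniz sum $\sum_{\sigma}\mathrm{sgn}(\sigma)A_{1\sigma(1)}\cdots A_{k\sigma(k)}$, an expression built only from the ring operations of the commutative algebra $\mathcal{A}$. Since each $\chi_\ell$ is an algebra homomorphism it commutes with this expression, giving
$$\chi_\ell(\det\nolimits_{\mathcal{A}}A)=\sum_{\sigma\in S_k}\mathrm{sgn}(\sigma)\,\chi_\ell(A_{1\sigma(1)})\cdots\chi_\ell(A_{k\sigma(k)})=\det\big(\chi_\ell(A_{ij})\big)_{i,j=1}^{k}.$$
That is, applying $\chi_\ell$ to the $\mathcal{A}$-valued determinant produces the ordinary scalar $k\times k$ determinant of the matrix of images $\chi_\ell(A_{ij})$.

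Next I would carry out the bookkeeping on the $km\times km$ complex matrix $A$. Conjugating by $Q:=I_k\otimes P$ replaces each block $A_{ij}$ by the diagonal matrix $P^{-1}A_{ij}P=\mathrm{diag}(\chi_1(A_{ij}),\dots,\chi_m(A_{ij}))$. Applying the perfect-shuffle permutation sending the basis index $(i,\ell)$ to $(\ell,i)$ then turns this $k\times k$ array of diagonal $m\times m$ blocks into an $m\times m$ block-diagonal matrix whose $\ell$-th diagonal block is exactly $(\chi_\ell(A_{ij}))_{i,j}$. Since conjugation and permutation similarity preserve the determinant, the previous step yields $\det A=\prod_{\ell=1}^m\det(\chi_\ell(A_{ij}))_{i,j}=\prod_{\ell=1}^m\chi_\ell(\det_{\mathcal{A}}A)$. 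On the other hand, $P$ also diagonalizes the single element $\det_{\mathcal{A}}A\in\mathcal{A}$, so its ordinary determinant is $\det(\det_{\mathcal{A}}A)=\prod_{\ell=1}^m\chi_\ell(\det_{\mathcal{A}}A)$. Comparing the two products gives $\det A=\det\det_{\mathcal{A}}A$.

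The main obstacle is organizational rather than deep: one must make the two reorganizations precise, namely that block-wise diagonalization followed by the index shuffle genuinely produces a block-diagonal matrix, and that the functionals $\chi_\ell$ really are algebra homomorphisms so that $\det_{\mathcal{A}}$ passes through them. For a fully general commutative matrix algebra, possibly containing nilpotents where simultaneous diagonalization fails, the only modification is to replace simultaneous diagonalization by simultaneous upper-triangularization over $\mathbb{C}$, which always exists for commuting matrices; the diagonal entries of the triangularized matrices still furnish algebra homomorphisms $\chi_\ell$, and the same shuffle turns $Q^{-1}AQ$ into a block upper-triangular matrix with identical diagonal blocks $(\chi_\ell(A_{ij}))_{i,j}$, so the determinant computation is unchanged.
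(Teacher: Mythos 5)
Your proof is correct, and it takes a genuinely different route from the paper's source: the paper offers no proof of this proposition at all, deferring entirely to the cited article of Silvester. Silvester's argument is ring-theoretic and valid for blocks in an arbitrary commutative ring: it proceeds by induction on the block size $k$, using block elimination to reach a block-triangular form and a polynomial-identity (genericity) trick to remove the invertibility assumptions that the elimination requires. You instead exploit what is special about the setting actually claimed in the statement --- $\mathcal{A}$ is a commutative algebra of \emph{complex matrices} --- so that its elements are simultaneously triangularizable (indeed diagonalizable here, since $\mathcal{A}$ is spanned by the powers of the permutation matrix $a_0$); the diagonal entries then furnish characters $\chi_\ell\colon\mathcal{A}\to\mathbb{C}$ through which the Leibniz sum defining $\det_\mathcal{A}$ passes, and the perfect-shuffle similarity yields $\det A=\prod_{\ell}\det\big(\chi_\ell(A_{ij})\big)_{i,j}=\prod_\ell\chi_\ell(\det_\mathcal{A}A)=\det\det_\mathcal{A}A$. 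All steps are sound, including the upper-triangularization variant, which is genuinely needed for the proposition as stated because a commutative matrix algebra may contain nilpotents. The trade-off: Silvester's proof gives the theorem over any commutative ring of blocks, while yours is self-contained and shorter in the complex-matrix case and makes explicit the mechanism the paper relies on afterwards, namely that spectral data of a block matrix over the circulant algebra $\mathcal{A}$ is computed by substituting the eigenvalues of $a$ (roots of unity) into the blocks. One wording slip: the diagonal blocks $(\chi_\ell(A_{ij}))_{i,j}$ produced by the shuffle are not ``identical'' --- they vary with $\ell$ --- though this does not affect the argument.
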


In particular we also can compute the determinant of $X$ from the beginning of the section as
$$\text{det} \text{det}_{\mathcal{A}} X,$$
where $\det_{\mathcal{A}} X$ in this case is a circulant matrix - of which the determinant is readily calculable by general formulae. A formula of this type needed in the subsequent section will be given by the following lemma.

\begin{lemma}\label{lemm:detx}
$$\det (\mu\cdot 1_{d+1} + \lambda\sum_{i=1}^d a^i) = (\mu + d\lambda)(\mu - \lambda)^d.$$
\end{lemma}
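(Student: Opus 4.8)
The plan is to first collapse the sum of powers of $a$ into the all-ones matrix, reducing the claim to a rank-one perturbation of a scalar matrix, and then to invoke the Matrix Determinant Lemma in exactly the way Proposition \ref{prop:inversion} invoked Sherman-Morrison. First I would observe that $a = a_0$ is the cyclic permutation matrix on $d+1$ symbols, so that $1_{d+1}, a, a^2, \dots, a^d$ are precisely the $d+1$ distinct powers of the cyclic shift; summing them covers every matrix entry exactly once and yields the all-ones matrix. Hence $\sum_{i=1}^d a^i = \underline{1}_{d+1}\underline{1}_{d+1}^t - 1_{d+1}$, and the matrix in question rewrites as
$$\mu\, 1_{d+1} + \lambda\sum_{i=1}^d a^i = (\mu - \lambda)\,1_{d+1} + \lambda\,\underline{1}_{d+1}\underline{1}_{d+1}^t,$$
i.e. a scalar multiple of the identity plus a rank-one term. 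This is the same decomposition already used in the proof of Proposition \ref{prop:inversion}.

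Next, assuming $\mu \neq \lambda$ so that $A := (\mu - \lambda)\,1_{d+1}$ is invertible, I would apply the Matrix Determinant Lemma (Lemma \ref{lemm:MDL}) in its rank-one form with $u = \lambda\,\underline{1}_{d+1}$ and $v = \underline{1}_{d+1}$. Since $\det A = (\mu - \lambda)^{d+1}$ and $v^t A^{-1} u = \frac{\lambda}{\mu-\lambda}\,\underline{1}_{d+1}^t\underline{1}_{d+1} = \frac{(d+1)\lambda}{\mu-\lambda}$, the lemma yields
$$\det\Big((\mu-\lambda)\,1_{d+1} + \lambda\,\underline{1}_{d+1}\underline{1}_{d+1}^t\Big) = \Big(1 + \tfrac{(d+1)\lambda}{\mu-\lambda}\Big)(\mu-\lambda)^{d+1} = (\mu + d\lambda)(\mu-\lambda)^d,$$
which is the asserted identity.

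Finally I would dispose of the boundary case $\mu = \lambda$, where the Matrix Determinant Lemma does not apply directly since $A$ fails to be invertible. The cleanest route is to note that both sides are polynomials in $(\mu,\lambda)$, so the identity, having been established on the dense set $\{\mu \neq \lambda\}$, extends to all $(\mu,\lambda)$ by continuity. Alternatively one checks $\mu = \lambda$ by hand: there the matrix equals $\lambda\,\underline{1}_{d+1}\underline{1}_{d+1}^t$, which has rank one and hence determinant zero for $d \geq 1$, matching the right-hand side $(d+1)\lambda \cdot 0 = 0$.

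I do not expect a genuine obstacle: the only point requiring a word of care is this degenerate case, and the computation is otherwise a routine rank-one determinant. For completeness I would remark that the same result follows equivalently from the circulant/spectral structure anticipated in the text before the lemma, namely that the matrix has eigenvalue $\mu + d\lambda$ once (on the all-ones vector) and $\mu - \lambda$ with multiplicity $d$ (on the orthogonal complement, where $\underline{1}_{d+1}\underline{1}_{d+1}^t$ vanishes); multiplying these eigenvalues reproduces $(\mu + d\lambda)(\mu - \lambda)^d$.
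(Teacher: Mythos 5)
Your proposal is correct and follows essentially the same route as the paper: the same rewriting of $\mu\,1_{d+1} + \lambda\sum_{i=1}^d a^i$ as $(\mu-\lambda)1_{d+1} + \lambda\,\underline{1}_{d+1}\underline{1}_{d+1}^t$ followed by the rank-one Matrix Determinant Lemma, Lemma \ref{lemm:MDL}. Your additional handling of the degenerate case $\mu = \lambda$ (by polynomial continuity or the rank-one observation) is a small refinement the paper omits, but it does not change the argument.
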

\begin{proof}
This determinant is easily calculated by the Matrix Determinant Lemma, Lemma \ref{lemm:MDL}, after a trivial reparameterization as before;
$$\mu\cdot 1_{d+1} + \lambda\sum_{i=1}^d a^i = (\mu - \lambda)1_{d+1} + \lambda \underline{1}_{d+1}\underline{1}^t_{d+1}$$
so that
$$\det (\mu\cdot 1_{d+1} + \lambda\sum_{i=1}^d a^i) = \Big(1 + \frac{(d+1)\lambda}{\mu - \lambda}\Big)\det((\mu - \lambda)1_{d+1}) = \frac{\mu + d\lambda}{\mu - \lambda} (\mu - \lambda)^{d+1} = (\mu + d\lambda)(\mu - \lambda)^d.$$
\end{proof}

\subsection{Recursion of $D_n$ via renormalization by $\Gamma_X$}
In order to determine the renormalization maps we need to calculate the matrix coronal of $X$ - which will be given by the following lemma.

\begin{lemma}
The block-linear system
$$X\cdot v = \underline{1}_d$$
is solved by
$$v = x^{-1}\odot (a^i + \mu + \lambda)_{i=1}^d\footnote{Here $\odot$ denotes the scalar-multiplication over $\mathcal{A}$; the notation is derived from the common notation $\circ$ for the Hadamard product in combination with $\cdot$ for the scalar multiplication}$$
iff $x = (\mu + \lambda)(\lambda(d-1)-\mu) + 1 + \lambda\sum_{i=1}^d a^i$ is non-singular.
\end{lemma}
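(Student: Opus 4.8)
The plan is to read off the entries of $X$ as a matrix over the commutative algebra $\mathcal{A}$ and then verify the claimed solution by direct substitution. The key preliminary observation is that the block $J_n$ occurring in $\Xi_n$ is not the full all-ones matrix but $\sum_{k=0}^{d} a_n^{k} = J_{d+1}\otimes \mathbf{1}_{(d+1)^{n-1}}$; hence each of its $(d+1)^{n-1}$-blocks is the identity $\mathbf{1}_{(d+1)^{n-1}}$. Consequently, under the identification $\mathcal{A}_{n-1}\cong\mathcal{A}$ with $a_{n-1}\leftrightarrow a$, the top-left $d\times d$ block reads
$$X_{ij} = \lambda\cdot 1 - (\lambda+\mu)\delta_{ij} + a^{i}\,[\,i+j=d+1\,],$$
so that the diagonal is $-\mu$, the plain off-diagonal entries are $\lambda$, and along the anti-diagonal an extra summand $a^{i}$ appears. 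In particular $X$ couples the unknown $v_i$ only to $v_{d+1-i}$, through the $a^i$-term.

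First I would substitute the ansatz $v_j = x^{-1}(a^{j}+\mu+\lambda)$, where $x^{-1}$ is the inverse in $\mathcal{A}$ supplied by Proposition \ref{prop:inversion}, into the $i$-th block equation. Writing $S:=\sum_{j=1}^d v_j$, the structure of $X$ gives
$$\sum_{j=1}^d X_{ij}v_j = \lambda S - (\lambda+\mu)v_i + a^{i}v_{d+1-i}.$$
The rest is bookkeeping in $\mathcal{A}$: by commutativity and $a^{d+1}=1$ one has $a^{i}v_{d+1-i}=x^{-1}\bigl(1+(\mu+\lambda)a^{i}\bigr)$, so the two $a^{i}$-contributions $-(\lambda+\mu)a^{i}$ and $+(\mu+\lambda)a^{i}$ cancel and the right-hand side loses its dependence on $i$. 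Collecting the remaining terms and using $\sum_{j=1}^d a^{j} = J-1$ for $J:=\sum_{k=0}^d a^{k}$, together with the factorization $\lambda d(\mu+\lambda)-(\mu+\lambda)^2 = (\mu+\lambda)\bigl(\lambda(d-1)-\mu\bigr)$, the inner bracket reduces to $1 + \lambda\sum_{i=1}^d a^{i} + (\mu+\lambda)\bigl(\lambda(d-1)-\mu\bigr)$, which is precisely $x$. Hence $\sum_j X_{ij}v_j = x^{-1}x = 1$ for every $i$, i.e. $Xv=\underline{1}_d$.

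Finally, for the equivalence I would argue both directions around invertibility of $x$. The ansatz is literally defined only when $x^{-1}$ exists, which gives one implication for free; for the other I would note that over the commutative algebra $\mathcal{A}$ the solvability and uniqueness of $Xv=\underline{1}_d$ are governed by $\det_{\mathcal{A}}X$, which by the Silvester-type factorization and a circulant evaluation as in Lemma \ref{lemm:detx} is a unit multiple of a power of $x$. Thus $X$ is invertible over $\mathcal{A}$ exactly when $x$ is, and in that regime the $v$ produced above is the unique solution.

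I expect the main obstacle to be organizational rather than deep: getting the entries of $X$ right — especially the $\lambda\cdot 1$ versus $\lambda J$ distinction stemming from the identity-block form of $J_n$ — and checking that a single scalar $x$ is consistent across all $d$ rows despite the pairwise coupling $v_i\leftrightarrow v_{d+1-i}$. A minor subtlety is the case of odd $d$, where $i=(d+1)/2$ lies on both the main and the anti-diagonal, so that $v_i$ is self-paired; here one should confirm that $a^{i}v_{d+1-i}=a^{i}v_i=x^{-1}\bigl(1+(\mu+\lambda)a^{i}\bigr)$ still holds, which it does since $a^{i}\cdot a^{i}=a^{2i}=a^{d+1}=1$, so the verification goes through uniformly in the parity of $d$.
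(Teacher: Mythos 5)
Your core argument is correct and is essentially the paper's own proof: the paper likewise reads $X$ as having diagonal entries $-\mu$, anti-diagonal entries $\lambda + a^{j}$, and $\lambda$ elsewhere (i.e.\ your identity-block reading of $J_n$ is the one the paper itself uses), and verifies $X\tilde{v} = x\,\underline{1}_d$ for $\tilde{v} = (a^{i}+\mu+\lambda)_{i=1}^{d}$ by the same substitution and cancellation, including the self-paired index $j = \tfrac{d+1}{2}$ via $a^{2j} = a^{d+1} = 1$. Carrying $x^{-1}$ through from the start and merging the two parity cases into a single computation are purely cosmetic differences.

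One caveat: your closing claim that $\det_{\mathcal{A}}X$ is a unit multiple of a power of $x$ --- hence that $X$ is invertible over $\mathcal{A}$ exactly when $x$ is --- is false for $d\geq 3$. Combining Proposition \ref{prop:detAX} with Lemma \ref{lemm:coronY} gives $\det_{\mathcal{A}}X = -x\,\bigl((\mu+\lambda)^2-1\bigr)^{d/2-1}$ for even $d$, and an analogous expression with the extra factor $a^{(d+1)/2}-\mu-\lambda$ for odd $d$; these additional factors are not units, since they degenerate precisely when $\mu+\lambda=\pm 1$, where $X$ becomes singular although $x$ in general does not. The error is harmless here because the claim is never needed: the direction ``$x$ non-singular $\Rightarrow$ the formula solves the system'' is exactly your substitution computation, and the converse is your observation that the formula is only defined when $x^{-1}$ exists; the lemma asserts neither uniqueness of the solution nor invertibility of $X$, so the $\det_{\mathcal{A}}$ remark should simply be dropped.
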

\begin{proof}
We just check that
$$X\cdot \tilde{v} = x\underline{1}_d$$
for
$$\tilde{v} = (a^i + \mu + \lambda)_{i=1}^d.$$
In case $d+1$ is even we need to handle the case $j = \frac{d+1}{2}$ for the following seperately. For all other cases we have
\begin{eqnarray*}
(X\cdot \tilde{v})_j &=& -\mu (a^j + \mu + \lambda) + (\lambda + a^j) (a^{d+1 - j} + \mu + \lambda) + \sum_{\substack{i=1\\i\notin\{j, d+1-j\}}}^d \lambda (a^i + \mu + \lambda)\\
&=& -\mu a^j - \mu(\mu + \lambda) + \lambda(a^{d+1-j} + \mu + \lambda) + a^{d+1} + a^j (\mu + \lambda) + \lambda \sum_{\substack{i=1\\i\notin\{j, d+1-j\}}}^d (a^i + \mu + \lambda)\\
&=& -\mu(\mu + \lambda) + a^{d+1} + \lambda a^j + \lambda\sum_{\substack{i=1\\i\neq j}}^d (a^i + \mu + \lambda)\\
&=& -\mu(\mu + \lambda) + 1 + \lambda\sum_{i=1}^d a^i + \lambda (d-1)(\mu + \lambda)\\
&=& (\lambda(d - 1) - \mu)(\mu + \lambda) + \lambda\sum_{i=1}^d a^i + 1 = x.
\end{eqnarray*}
In case $d+1$ is even and $j = \frac{d+1}{2}$ we have
\begin{eqnarray*}
(X\cdot \tilde{v})_j &=& (a^j -\mu)(a^j + \mu + \lambda) + \sum_{\substack{i=1\\i\neq j}} \lambda (a^i + \mu + \lambda)\\
&=& a^{2j} + a^j(\mu + \lambda) - \mu(a^j + \mu + \lambda) + \lambda\sum_{\substack{i=1\\i\neq j}}^d (a^i + \mu + \lambda)\\
&=& 1 + \lambda a^j - \mu (\mu + \lambda) + \lambda \sum_{\substack{i=1\\i\neq j}}^d (a^i + \mu + \lambda)\\
&=& -\mu (\mu + \lambda) + \lambda\sum_{i=1}^d a^i + \lambda (d-1)(\mu+\lambda)\\
&=& (\lambda(d-1) - \mu)(\mu + \lambda) + \lambda \sum_{i=1}^d a^i + 1 = x.
\end{eqnarray*}
Thus in every case we obtain the matrix $x$ and so we have
$$X\tilde{v} = x\odot \underline{1}_d.$$
\end{proof}

In what follows for $x,y\in \mathcal{A}$ we will simply write $\frac{x}{y}$ for $x^{-1}y$ which is a well-defined fraction since $\mathcal{A}$ is a commutative algebra.

\begin{corollary}
We have
$$\Gamma_X(\mu, \lambda) = x^{-1}(\sum_{i=1}^d a^i + d(\mu + \lambda)) = \frac{\sum_{i=1}^d a^i + d(\mu + \lambda)}{\lambda \sum_{i=1}^d a^i + 1 + (\lambda(d-1) - \mu)(\mu + \lambda)}.$$
\end{corollary}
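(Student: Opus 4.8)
The plan is to read $\Gamma_X$ off directly from the preceding lemma, which already produces an explicit solution of the block-linear system $Xv = \underline{1}_d$; the corollary is then a one-line contraction against $\underline{1}_d^t$.

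First I would recall that by definition $\Gamma_X(\mu,\lambda) = \underline{1}_d^t X^{-1}\underline{1}_d$. Since $\underline{1}_d$ has every one of its $d$ blocks equal to the unit $1\in\mathcal{A}$, for any block vector $v$ we have $\underline{1}_d^t v = \sum_{i=1}^d v_i$. Hence, writing $v$ for the solution of $Xv = \underline{1}_d$, it holds that $\Gamma_X(\mu,\lambda) = \sum_{i=1}^d v_i$. The preceding lemma supplies exactly such a $v$: setting $\tilde v := (a^i + \mu + \lambda)_{i=1}^d$ it shows $X\tilde v = x\odot\underline{1}_d$, where $x = (\mu+\lambda)(\lambda(d-1)-\mu) + 1 + \lambda\sum_{i=1}^d a^i$. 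Provided $x$ is non-singular in $\mathcal{A}$, multiplying through by the central scalar $x^{-1}$ gives $X(x^{-1}\odot\tilde v) = \underline{1}_d$, so by invertibility of $X$ we may take $v = x^{-1}\odot\tilde v$, i.e. $v_i = x^{-1}(a^i + \mu + \lambda)$.

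Then I would sum the components, using that $\mathcal{A}$ is commutative so that $x^{-1}$ factors out of the sum:
$$\sum_{i=1}^d v_i = \sum_{i=1}^d x^{-1}(a^i + \mu + \lambda) = x^{-1}\Big(\sum_{i=1}^d a^i + d(\mu + \lambda)\Big),$$
which is the first claimed equality. Finally I would substitute the explicit expression for $x$ and rewrite $x^{-1}(\cdots)$ as the fraction $\tfrac{\cdots}{x}$ under the convention $\tfrac{u}{w} := w^{-1}u$, valid because $\mathcal{A}$ is commutative, obtaining the second displayed equality.

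There is no genuine obstacle here beyond bookkeeping, the heavy lifting having been done in the lemma. The two points demanding care are: (i) that the lemma normalizes to $X\tilde v = x\odot\underline{1}_d$ rather than $=\underline{1}_d$, so the correct solution is $v = x^{-1}\odot\tilde v$ and the non-singularity of $x$ is precisely what identifies this $v$ with $X^{-1}\underline{1}_d$; and (ii) that commutativity of $\mathcal{A}$ is the one structural fact actually used, both in pulling $x^{-1}$ through $X$ and in extracting it from the sum, so I would flag it explicitly at each use.
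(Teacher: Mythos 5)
Your proposal is correct and is exactly the argument the paper intends: the corollary is stated without proof as an immediate consequence of the preceding lemma, and your contraction of the lemma's solution $v = x^{-1}\odot(a^i+\mu+\lambda)_{i=1}^d$ against $\underline{1}_d^t$, using commutativity of $\mathcal{A}$ to pull $x^{-1}$ through $X$ and out of the sum, is precisely that one-line deduction.
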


The renormalization is now determined by $\Gamma_X$ over the coefficients of $x^{-1}$.

\begin{proposition}
Let $\alpha := (\mu + \lambda)(\lambda(d-1) -\mu)+1-\lambda$, then we have
$$x^{-1} = \frac{1}{\alpha(\alpha + (d+1)\lambda)}\Big((\alpha + d\lambda) \cdot 1_{(d+1)^{n-1}} - \lambda \cdot \sum_{i=1}^d a^i\Big).$$
In particular the matrix coronal of $X$ is given as
$$\Gamma_X(\mu, \lambda) = \frac{1}{\alpha(\alpha + (d+1)\lambda)} \Big(\big((\alpha + d\lambda)d(\mu + \lambda) - d\lambda \big)1_{(d+1)^{n-1}} + \big(\alpha + \lambda - d\lambda (\mu + \lambda)\big)\sum_{i=1}^d a^i\Big).$$
\end{proposition}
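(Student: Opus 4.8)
The plan is to obtain $x^{-1}$ by reducing $x$ to the exact shape treated in Proposition~\ref{prop:inversion}, and then to read off $\Gamma_X$ from the preceding Corollary through a single multiplication inside the commutative algebra $\mathcal{A}$.

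First I would absorb the scalar part of $x$ into $\alpha$. By the previous lemma $x = (\mu+\lambda)(\lambda(d-1)-\mu) + 1 + \lambda\sum_{i=1}^d a^i$, and the definition of $\alpha$ gives $(\mu+\lambda)(\lambda(d-1)-\mu)+1 = \alpha+\lambda$, so that
$$x = (\alpha+\lambda)\,1_{(d+1)^{n-1}} + \lambda\sum_{i=1}^d a^i.$$
This is exactly of the form $\mu'\,1 + \lambda'\sum_{i=1}^d a^i$ handled by the $\mathcal{A}_{n-1}$-version of Proposition~\ref{prop:inversion}, with $\mu' = \alpha+\lambda$ and $\lambda' = \lambda$. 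Substituting these values, the denominators become $\mu'-\lambda' = \alpha$ and $\mu'+d\lambda' = \alpha+(d+1)\lambda$, while the identity coefficient becomes $\mu'+(d-1)\lambda' = \alpha+d\lambda$, which reproduces the claimed expression for $x^{-1}$ verbatim.

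For the coronal I would invoke the Corollary $\Gamma_X(\mu,\lambda) = x^{-1}\big(\sum_{i=1}^d a^i + d(\mu+\lambda)\big)$ and expand the product. Writing $S := \sum_{i=1}^d a^i$, the only genuine ingredient is the multiplication rule in $\mathcal{A}$: since $a$ has order $d+1$, the full cyclic sum $J := \sum_{i=0}^d a^i = S+1$ is absorbing (i.e. $a^iJ=J$), whence $J^2=(d+1)J$ and therefore
$$S^2 = (J-1)^2 = (d+1)J - 2J + 1 = (d-1)S + d.$$
Expanding $\big((\alpha+d\lambda)\,1 - \lambda S\big)\big(S + d(\mu+\lambda)\big)$ and inserting this relation, the coefficient of $S$ collapses to $\alpha+\lambda-d\lambda(\mu+\lambda)$ and the coefficient of the identity to $(\alpha+d\lambda)d(\mu+\lambda) - d\lambda$, giving precisely the stated formula.

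I do not expect a conceptual obstacle: both halves are bookkeeping once the substitution $\mu'=\alpha+\lambda$ is spotted and $S^2$ is reduced. The one place needing care is the simplification of the $S$-coefficient, where the three contributions $\alpha+d\lambda$, $-\lambda(d-1)$ (from $-\lambda S^2$) and $-d\lambda(\mu+\lambda)$ must cancel down to the clean factor $\alpha+\lambda-d\lambda(\mu+\lambda)$; that this factor emerges cleanly serves as a useful consistency check on the sign conventions and on the computation of $S^2$.
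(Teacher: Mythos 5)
Your proposal is correct and follows essentially the same route as the paper: the inverse is read off from Proposition \ref{prop:inversion} (you make explicit the substitution $\mu' = \alpha+\lambda$, $\lambda' = \lambda$ that the paper leaves implicit), and the coronal is obtained by expanding $x^{-1}\big(\sum_{i=1}^d a^i + d(\mu+\lambda)\big)$ using the same reduction $\big(\sum_{i=1}^d a^i\big)^2 = d\cdot 1 + (d-1)\sum_{i=1}^d a^i$, which you additionally justify via the absorbing element $J = 1 + \sum_{i=1}^d a^i$.
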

\begin{proof}
The formula of $x^{-1}$ can easily be inferred from Proposition \ref{prop:inversion}.

Furthermore note that
\begin{eqnarray*}
\Gamma_X(\mu, \lambda) &=& \frac{1}{\alpha(\alpha + (d+1)\lambda)}((\alpha + d\lambda) 1_{(d+1)^{n-1}} - \lambda\sum_{i=1}^d a^i)(\sum_{i=1}^d a^i + d(\mu + \lambda))\\
&=& \frac{1}{\alpha(\alpha + (d+1)\lambda)}(\alpha + d\lambda - \lambda d(\mu + \lambda))\sum_{i=1}^d a^i + (\alpha + d\lambda) d(\mu + \lambda) 1_{(d+1)^{n-1}} - \lambda (\sum_{i=1}^d a^i)^2
\end{eqnarray*}
and
$$(\sum_{i=1}^d a^i)^2 = d1_{(d+1)^{n-1}} + (d-1)\sum_{i=1}^d a^i$$
obtaining the wanted representation of $\Gamma_X(\mu, \lambda)$.
\end{proof}

Now let
$$\mu' = \mu + \frac{\lambda^2}{\alpha(\alpha + (d+1)\lambda)}\big((\alpha + d\lambda) d(\mu + \lambda) - d\lambda\big)$$
and
$$\lambda' = -\frac{\lambda^2}{\alpha(\alpha + (d+1)\lambda)}\big(\alpha + \lambda - d\lambda (\mu + \lambda)\big).$$

\begin{corollary}\label{cor:bidim_spectral}
We have
$$D_n(\mu, \lambda) = \det X \cdot D_{n-1}(\mu', \lambda').$$
\end{corollary}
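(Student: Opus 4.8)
The plan is to read the recursion straight off the Schur-complement factorization already recorded just before the statement, namely
$$D_n(\mu,\lambda) = \det X \cdot \det\big(b_{n-1} - \mu\mathbf{1}_{(d+1)^{n-1}} - \lambda^2\Gamma_X(\mu,\lambda)\big),$$
which is obtained by applying Schur-Renormalization (Lemma \ref{lemm:SR}) to the $(d+1)\times(d+1)$ block decomposition of $\Xi_n(\mu,\lambda)$ that peels off the last block row and column (here the surviving corner block is the level-$(n-1)$ representation $b_{n-1}$, the coupling column and row are the uniform $\lambda\underline{1}_d$ and $\lambda\underline{1}_d^t$, which is why exactly one $\lambda^2\Gamma_X$ correction appears). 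The $\det X$ factor is then carried through verbatim, so the entire task reduces to recognising the second factor as $\Xi_{n-1}(\mu',\lambda')$, whose determinant is by definition $D_{n-1}(\mu',\lambda')$.

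First I would substitute the explicit expression for $\Gamma_X(\mu,\lambda)$ from the preceding proposition, which has the form $c_1\mathbf{1}_{(d+1)^{n-1}} + c_2\sum_{i=1}^d a^i$ with
$$c_1 = \frac{(\alpha+d\lambda)d(\mu+\lambda)-d\lambda}{\alpha(\alpha+(d+1)\lambda)}, \qquad c_2 = \frac{\alpha+\lambda-d\lambda(\mu+\lambda)}{\alpha(\alpha+(d+1)\lambda)}.$$
The second factor then collapses to $b_{n-1} - (\mu+\lambda^2 c_1)\mathbf{1}_{(d+1)^{n-1}} - \lambda^2 c_2 \sum_{i=1}^d a^i$, i.e. the single undistorted corner block $b_{n-1}$ plus an element of the commutative algebra $\mathcal{A}_{n-1}$ spanned by $\mathbf{1}$ and $\sum_{i=1}^d a^i$.

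Next I would rewrite the target $\Xi_{n-1}(\mu',\lambda')$ in the same basis. The key identity is $J_{n-1} = \mathbf{1}_{(d+1)^{n-1}} + \sum_{i=1}^d a_{n-1}^i$, valid because $\sum_{i=0}^d a_0^i$ is the all-ones matrix and $a_{n-1}^i = a_0^i\otimes\mathbf{1}_{(d+1)^{n-2}}$. Substituting into the definition $\Xi_{n-1}(\mu',\lambda') = \lambda' J_{n-1} + b_{n-1} - (\lambda'+\mu')\mathbf{1}_{(d+1)^{n-1}}$ gives
$$\Xi_{n-1}(\mu',\lambda') = b_{n-1} - \mu'\mathbf{1}_{(d+1)^{n-1}} + \lambda' \sum_{i=1}^d a^i.$$
Matching the coefficient of $\mathbf{1}$ and of $\sum_{i=1}^d a^i$ between this and the collapsed second factor — and noting the $b_{n-1}$ blocks agree identically — yields exactly $\mu' = \mu + \lambda^2 c_1$ and $\lambda' = -\lambda^2 c_2$, which are precisely the definitions of $\mu'$ and $\lambda'$ fixed before the corollary. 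Hence the second determinant is $D_{n-1}(\mu',\lambda')$ and the claim follows.

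The step I expect to require the most care is the structural bookkeeping guaranteeing that the recursion closes: one must confirm that the Schur complement genuinely lands in $\mathcal{A}_{n-1}$ together with the single block $b_{n-1}$, rather than in a larger matrix space. This rests on the two facts supplied earlier — that $\Gamma_X$ lies in the commutative algebra generated by $a_{n-1}$ (so it is an $\mathcal{A}$-combination of $\mathbf{1}$ and $\sum a^i$), and that the coupling blocks of $X$ to the last block carry no $b$-contribution. Once these are in place, the remaining verification is the purely algebraic coefficient comparison above, and no genuine computation beyond inserting the already-established formula for $\Gamma_X$ is needed.
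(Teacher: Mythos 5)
Your proof is correct and takes essentially the same route as the paper: start from the Schur-complement factorization $D_n(\mu,\lambda) = \det X \cdot \det\big(b_{n-1} - \mu\mathbf{1} - \lambda^2\Gamma_X(\mu,\lambda)\big)$ already recorded in the text, insert the computed form of $\Gamma_X$, and identify the second factor with $\Xi_{n-1}(\mu',\lambda')$ by matching the coefficients of $\mathbf{1}$ and $\sum_{i=1}^d a^i$ against the definitions of $\mu'$ and $\lambda'$. If anything, you are more explicit than the paper's proof (which ends with "the formula follows from above proposition"), since you spell out the identity $J_{n-1} = \mathbf{1} + \sum_{i=1}^d a_{n-1}^i$ and keep the sign $\lambda' = -\lambda^2 c_2$ straight, which the paper's underbrace labeling actually garbles.
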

\begin{proof}
By Schur Renormalization, Lemma \ref{lemm:SR}, we have
\begin{eqnarray*}
D_n(\mu, \lambda) &=& \det X \det(b - \mu\cdot 1_{(d+1)^{n-1}} - \lambda^2\Gamma_X(\mu, \lambda))\\
&=& \det X \det\Big(b - \underbrace{\frac{\lambda^2}{\alpha(\alpha + (d+1)\lambda)}\big(\alpha + \lambda - d\lambda (\mu + \lambda)\big)}_{=\lambda'}\sum_{i=1}^d a^i\\
&&- \Big(\underbrace{\mu + \frac{\lambda^2}{\alpha(\alpha + (d+1)\lambda)}\big((\alpha + d\lambda) d(\mu + \lambda) - d\lambda\big)\Big)}_{= \mu'}1_{(d+1)^{n-1}}\Big)
\end{eqnarray*}
and thus the formula follows from above proposition.
\end{proof}

In order to facilitate computation in what follows we give a factorization of the terms in $\mu'$ and $\lambda'$ from $\Gamma_X(\mu, \lambda)$.
\begin{lemma}
We have
$$\mu' = \mu + \frac{d\lambda^2 ((d-1)\lambda^2 + (d-2) \lambda \mu - \mu^2 + \mu)}{((d-1)\lambda - \mu + 1)((d-1)\lambda^2 + (d-2)\lambda\mu - \lambda - \mu^2 + 1)}$$
and
$$\lambda' = \frac{\lambda^2 (\lambda + \mu - 1)}{((d-1)\lambda - \mu + 1)((d-1)\lambda^2 + (d-2)\lambda\mu - \lambda - \mu^2 +1)}$$
\end{lemma}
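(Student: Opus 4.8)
The statement is a purely algebraic simplification of the two expressions for $\mu'$ and $\lambda'$ recorded just before the lemma, so the plan is to expand the abbreviation $\alpha$ and isolate a common factor that cancels between the numerators and the denominator $\alpha(\alpha + (d+1)\lambda)$. First I would expand the definition $\alpha = (\mu+\lambda)((d-1)\lambda - \mu) + 1 - \lambda$ to obtain $\alpha = (d-1)\lambda^2 + (d-2)\lambda\mu - \mu^2 - \lambda + 1$, which is exactly the second factor appearing in both claimed denominators. The heart of the argument is then the factorization $\alpha + (d+1)\lambda = (\lambda+\mu+1)((d-1)\lambda - \mu + 1)$, which one verifies by multiplying out the right-hand side and comparing coefficients. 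This is what converts the degree-two denominator factor $\alpha + (d+1)\lambda$ into the linear factor $(d-1)\lambda - \mu + 1$ of the claim, at the cost of a spurious factor $(\lambda+\mu+1)$ that will be cancelled by the numerator.

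For $\lambda'$ I would next simplify its numerator. Writing $s := \lambda + \mu$, a short computation collapses the polynomial into a difference of squares, namely $\alpha + \lambda - d\lambda(\mu+\lambda) = 1 - s^2 = -(\lambda+\mu-1)(\lambda+\mu+1)$. Substituting this together with the factorization of $\alpha + (d+1)\lambda$ into the given expression $\lambda' = -\frac{\lambda^2}{\alpha(\alpha+(d+1)\lambda)}(\alpha+\lambda - d\lambda(\mu+\lambda))$ and cancelling the common factor $(\lambda+\mu+1)$ yields precisely the asserted formula for $\lambda'$.

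For $\mu'$ the reduction is analogous but needs one further identity. Factoring the scalar $d$ out of the bracket reduces matters to showing $(\alpha + d\lambda)(\mu+\lambda) - \lambda = (\lambda+\mu+1)\beta$, where $\beta := (d-1)\lambda^2 + (d-2)\lambda\mu - \mu^2 + \mu$ is the desired numerator factor. Here I would use the relation $\beta = \alpha + s - 1$ and expand both sides in $s$; the identity then collapses to $\alpha = d\lambda s - \lambda - s^2 + 1$, which is nothing but the expansion of $\alpha$ obtained in the first step. Substituting into $\mu' = \mu + \frac{d\lambda^2((\alpha+d\lambda)(\mu+\lambda)-\lambda)}{\alpha(\alpha+(d+1)\lambda)}$ and cancelling $(\lambda+\mu+1)$ against the factorization of $\alpha+(d+1)\lambda$ gives the claimed form for $\mu'$.

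The entire argument is elementary polynomial algebra with no genuine obstacle; the only nonroutine observation is spotting the factorization $\alpha + (d+1)\lambda = (\lambda+\mu+1)((d-1)\lambda-\mu+1)$, after which the appearance and cancellation of the factor $(\lambda+\mu+1)$ in both numerators is forced and the two formulas drop out. I would therefore organize the write-up around that single factorization plus the two difference-of-squares type identities, and leave the remaining coefficient-matching to the reader.
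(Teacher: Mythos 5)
Your proposal is correct and follows essentially the same route as the paper's proof: expand $\alpha = (d-1)\lambda^2 + (d-2)\lambda\mu - \lambda - \mu^2 + 1$, spot the key factorization $\alpha + (d+1)\lambda = (\lambda+\mu+1)((d-1)\lambda-\mu+1)$, and show both numerators are divisible by $(\lambda+\mu+1)$ so that this factor cancels. The only difference is cosmetic: where the paper expands the numerators fully into monomials and asserts the factorizations $s_0 = d(\lambda+\mu+1)\beta$ and $s_1 = -(\lambda+\mu-1)(\lambda+\mu+1)$ can be verified, you obtain the same divisibility structurally via the substitution $s = \lambda+\mu$ and the relation $\beta = \alpha + s - 1$, which is a slightly tidier bookkeeping of the identical computation.
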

\begin{proof}
First expand $\alpha$ as
$$\alpha = (d-1)\lambda^2 + (d-2)\lambda\mu - \lambda -\mu^2 + 1.$$
Observe that in our given factorization this is the final form of this term.
The degree one term in the denominator stems from $\alpha + (d+1)\lambda$ which expands as
$$\alpha + (d+1)\lambda = (d-1)\lambda^2 +(d-2)\lambda\mu + d\lambda - \mu^2 + 1 = (\lambda + \mu + 1)((d-1)\lambda -\mu +1).$$
We will show that both numerators are divisible by $(\lambda + \mu + 1)$ - resulting in this term being cancelled.

Let $s_0$ and $s_1$ denote the numerators of the quotients in $\mu'$ and $\lambda'$ ignoring $\lambda^2$ respectively, i.e.
$$s_0 := (\alpha + d\lambda)d(\mu+\lambda) - d\lambda$$
and
$$s_1 := \alpha + \lambda - d\lambda(\mu + \lambda)$$
with respective expansions
$$s_0 = d\Big((d-1) \lambda^3 + (2d - 3) \lambda^2 \mu + (d - 1) \lambda^2 + (d - 3) \lambda\mu^2 + (d - 1)\lambda\mu - \mu^3 + \mu\Big)$$
$$s_1 = -\lambda^2 - 2\lambda\mu - \mu^2 + 1.$$
From here the claimed factorization
$$s_0 = d(\lambda + \mu + 1)((d-1)\lambda^2 + (d-2)\lambda\mu - \mu^2 + \mu)$$
$$s_1 = -(\lambda + \mu - 1)(\lambda + \mu + 1)$$
is easily verified.
\end{proof}

\subsection{Calculation of $\det X$}
In this subsection we will drop the subscript from $a_0$ and will simply denote it by $a$ as every calculation is performed over $\mathcal{A} = \mathcal{A}_0$.

We decompose $X_0$ as
$$X_0 = \lambda \underline{1}_d\underline{1}_d^t + \underbrace{A - (\mu + \lambda) 1_{d}}_{=: Y}$$
for
$$A = \begin{pmatrix}
& & a\\
& \reflectbox{$\ddots$} & \\
a^d & &
\end{pmatrix}.$$

\begin{proposition}\label{prop:detAX}
We have
$$\text{det}_\mathcal{A} X_0 = (1 + \lambda \Gamma_Y(\mu, \lambda)) \big((\mu + \lambda - 1)(\mu + \lambda + 1)\big)^{\lfloor d/2\rfloor} \cdot \begin{cases}
1 &, ~ d\text{ even}\\
-\mu - \lambda + a^{\frac{d+1}{2}} &, ~ d\text{ odd}
\end{cases}.$$
\end{proposition}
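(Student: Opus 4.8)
The plan is to read off the rank-one structure $X_0 = Y + \lambda\,\underline{1}_d\underline{1}_d^t$ and to reduce the claim, via the Matrix Determinant Lemma, to a direct evaluation of $\det_{\mathcal{A}} Y$. First I would apply Lemma \ref{lemm:MDL} over the \emph{commutative algebra} $\mathcal{A}$ (working with $\det_{\mathcal{A}}$ throughout) with $A = Y$, $u = \lambda\underline{1}_d$ and $v = \underline{1}_d$, so that $uv^t = \lambda\underline{1}_d\underline{1}_d^t$. This gives
$$\det_{\mathcal{A}} X_0 = (1 + \lambda\,\underline{1}_d^t Y^{-1}\underline{1}_d)\,\det_{\mathcal{A}} Y = (1 + \lambda\,\Gamma_Y(\mu,\lambda))\,\det_{\mathcal{A}} Y,$$
since $\Gamma_Y(\mu,\lambda) = \underline{1}_d^t Y^{-1}\underline{1}_d$ is exactly the matrix coronal in the sense used earlier. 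This isolates the first factor and reduces the proposition to showing $\det_{\mathcal{A}} Y$ equals $\big((\mu+\lambda-1)(\mu+\lambda+1)\big)^{\lfloor d/2\rfloor}$ times the even/odd correction.

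Next I would compute $\det_{\mathcal{A}} Y$ by exploiting the sparsity of $Y = A - (\mu+\lambda)1_d$, whose only nonzero entries are $-(\mu+\lambda)$ on the main diagonal and $a^i$ in the anti-diagonal slot $(i, d+1-i)$. In the Leibniz expansion of $\det_{\mathcal{A}}$ a permutation $\sigma$ contributes a nonzero term only when $\sigma(i)\in\{i, d+1-i\}$ for every $i$; that is, $\sigma$ permutes each unordered pair $\{i, d+1-i\}$ independently. Up to a simultaneous permutation of rows and columns (which leaves $\det_{\mathcal{A}}$ unchanged), $Y$ is therefore block diagonal with $\lfloor d/2\rfloor$ blocks $\begin{pmatrix}-(\mu+\lambda) & a^i \\ a^{d+1-i} & -(\mu+\lambda)\end{pmatrix}$ plus, when $d$ is odd, a single $1\times 1$ block $[-(\mu+\lambda)+a^{(d+1)/2}]$ coming from the fixed index $(d+1)/2$. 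Each $2\times2$ block has $\mathcal{A}$-determinant $(\mu+\lambda)^2 - a^i a^{d+1-i} = (\mu+\lambda)^2 - a^{d+1} = (\mu+\lambda)^2 - 1$, using that $a$ has order $d+1$ so $a^{d+1} = 1_{d+1}$. Factoring $(\mu+\lambda)^2 - 1 = (\mu+\lambda-1)(\mu+\lambda+1)$ and multiplying in the odd correction $-\mu-\lambda+a^{(d+1)/2}$ yields precisely the claimed value of $\det_{\mathcal{A}} Y$; combined with the previous display this proves the proposition.

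The hard part will be legitimizing the Matrix Determinant Lemma over $\mathcal{A}$, which is not a field and carries zero divisors (it is the group algebra $\mathbb{C}[C_{d+1}]$). The cleanest remedy is to diagonalize $\mathcal{A}$ by the discrete Fourier transform: as a finite abelian group algebra it splits as $\mathcal{A}\cong\mathbb{C}^{d+1}$, under which $a$ acts as the scalar $\omega^j$ (for $\omega = e^{2\pi i/(d+1)}$) in the $j$-th component. Applying this isomorphism entrywise turns every $\mathcal{A}$-valued matrix into a $(d+1)$-tuple of ordinary complex matrices, so both the rank-one update and the block factorization become component-wise statements over the field $\mathbb{C}$, where Lemma \ref{lemm:MDL} applies verbatim whenever the relevant component of $Y$ is invertible; reassembling recovers the $\mathcal{A}$-valued identity on the locus where $Y$ is invertible, which is exactly the regime in which $\Gamma_Y$ is defined. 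Alternatively one may observe that the adjugate form $\det_{\mathcal{A}}(Y + uv^t) = \det_{\mathcal{A}} Y + v^t\operatorname{adj}(Y)u$ is a polynomial identity valid over any commutative ring, and then divide by $\det_{\mathcal{A}} Y$ wherever it is a unit.
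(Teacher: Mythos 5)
Your proposal is correct and follows the same skeleton as the paper's proof: both start from the rank-one decomposition $X_0 = Y + \lambda\,\underline{1}_d\underline{1}_d^t$ and invoke the Matrix Determinant Lemma over $\mathcal{A}$ to peel off the factor $1+\lambda\Gamma_Y(\mu,\lambda)$, reducing everything to $\det_{\mathcal{A}}Y$. Where you genuinely diverge is in the evaluation of $\det_{\mathcal{A}}Y$ and in the level of rigor. The paper brings $Y$ into upper-triangular form by elementary row operations, which forces division by $\mu+\lambda$ and produces the auxiliary quantity $\beta = -\mu-\lambda+\tfrac{1}{\mu+\lambda}$, recombined at the end via $(-\mu-\lambda)\beta = (\mu+\lambda)^2-1$; your argument instead observes that the support of $Y$ respects the involution $i\mapsto d+1-i$, so that after a simultaneous row--column permutation $Y$ is block diagonal with $\lfloor d/2\rfloor$ blocks of determinant $(\mu+\lambda)^2 - a^ia^{d+1-i} = (\mu+\lambda)^2-1$ (plus the fixed-point block $a^{(d+1)/2}-(\mu+\lambda)$ for $d$ odd). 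This is cleaner: it is a genuine polynomial identity, needs no generic invertibility of $\mu+\lambda$, and makes the parity dichotomy transparent through the fixed-point structure of the involution. Second, you explicitly address a point the paper glosses over, namely that Lemma \ref{lemm:MDL} is stated over a field while $\mathcal{A}\cong\mathbb{C}[C_{d+1}]$ has zero divisors; both of your remedies (componentwise reduction via the Fourier splitting $\mathcal{A}\cong\mathbb{C}^{d+1}$, or the adjugate identity $\det_{\mathcal{A}}(Y+uv^t)=\det_{\mathcal{A}}Y + v^t\operatorname{adj}(Y)u$ valid over any commutative ring) are sound and close a gap the paper leaves implicit. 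The paper's triangularization buys nothing over your pairing argument except closer alignment with the analogous computation it reuses in the subsequent lemma on $\det(-( \mu+\lambda)+a^{\frac{d+1}{2}})$.
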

\begin{proof}
First we apply the matrix determinant lemma to obtain
$$\text{det}_\mathcal{A} X_0 = \text{det}_\mathcal{A}(Y + \lambda \underline{1}_d\underline{1}_d^t) = \text{det}_\mathcal{A} Y \cdot \text{det}_\mathcal{A}(1 + \lambda \underline{1}^t_d Y^{-1}\underline{1}_d).$$
Note now that the right-most matrix is a $1\times 1$ block matrix and as such has determinant
$$\text{det}_\mathcal{A}(1 + \lambda \underline{1}_d^t Y^{-1} \underline{1}_d) = 1 + \lambda\Gamma_Y(\mu, \lambda)$$
over $\mathcal{A}$.

We now compute $\text{det}_\mathcal{A} Y$ by bringing $Y$ into upper-triangular form. For $d$ even one upper-triangular form is
$$\begin{pmatrix}
-\mu - \lambda & & & & & a \\
 & \ddots & & & \reflectbox{$\ddots$} \\
 & & -\mu - \lambda & a^{d/2} & &\\
 & & 0 & \beta & &\\
 & \reflectbox{$\ddots$} & & & \ddots & \\
0 & & & & & \beta
\end{pmatrix}$$
by elementary transformations, where $\beta = -\mu - \lambda + \frac{a^{d+1}}{\mu + \lambda} = -\mu - \lambda + \frac{1}{\mu + \lambda}$

For $d$ odd a similar upper-triangular form looks like
$$\begin{pmatrix}
-\mu - \lambda & & & & & & a \\
 & \ddots & & & & \reflectbox{$\ddots$} \\
 & & -\mu - \lambda & & a^{\frac{d-1}{2}} &\\
 & & & -\mu - \lambda + a^{\frac{d+1}{2}} & &\\
 & & 0 & & \beta \\
 & \reflectbox{$\ddots$} & & & & \ddots & \\
0 & & & & & & \beta
\end{pmatrix}.$$
Thus
$$\text{det}_\mathcal{A} Y = \begin{cases}
(-\mu - \lambda)^{d/2}\beta^{d/2} &, ~ d\text{ even}\\
(-\mu - \lambda)^{(d-1)/2} \beta^{(d-1)/2} (-\mu - \lambda + a^{\frac{d+1}{2}}) &, ~ d\text{ odd}
\end{cases}$$
By $\beta = \frac{1 - (\mu + \lambda)^2}{\mu + \lambda}$ we obtain 
$$(-\mu - \lambda) \beta = (\mu +\lambda)^2 - 1 = (\mu + \lambda - 1)(\mu + \lambda + 1)$$
and consequently
$$\text{det}_\mathcal{A} Y = \big((\mu + \lambda -1)(\mu + \lambda + 1)\big)^{\lfloor d/2\rfloor} \begin{cases}
1 &, ~ d\text{ even}\\
-\mu - \lambda + a^{\frac{d+1}{2}} &, ~ d\text{ odd}
\end{cases}.$$
\end{proof}
\begin{lemma}\label{lemm:coronY}
The block-linear system
$$Y\cdot v = \underline{1}_d$$
is solved by
$$v = \frac{1}{1 - (\lambda + \mu)^2} (a^i + \mu + \lambda)_{i=1}^d.$$
In particular it holds that
$$\Gamma_Y(\mu) = \frac{\sum_{i=1}^d a^i + d(\mu + \lambda)}{1 - (\mu + \lambda)^2}.$$
\end{lemma}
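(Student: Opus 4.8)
The plan is to prove this in exactly the same style as the preceding lemma computing the coronal of $X$: rather than inverting $Y$, I would simply exhibit the claimed vector $\tilde v := \frac{1}{1-(\mu+\lambda)^2}(a^i + \mu + \lambda)_{i=1}^d$ and verify directly that $Y\tilde v = \underline 1_d$. Since $Y$ is invertible (its block determinant was just computed in Proposition \ref{prop:detAX} and is a unit away from the degenerate locus), the existence of one solution forces $\tilde v = Y^{-1}\underline 1_d$, and then the coronal follows immediately as $\Gamma_Y = \underline 1_d^t Y^{-1}\underline 1_d = \underline 1_d^t \tilde v = \sum_{i=1}^d \tilde v_i$, which is the stated closed form.

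The verification exploits the sparse structure of $Y = A - (\mu+\lambda)1_d$, whose only nonzero blocks in row $j$ are the diagonal entry $-(\mu+\lambda)$ at column $j$ and the anti-diagonal entry $a^j$ at column $d+1-j$. Writing $c := \mu + \lambda$, for any index $j$ with $j \neq d+1-j$ I would compute
$$(Y\tilde v)_j = -c\,\tilde v_j + a^j \tilde v_{d+1-j} = \frac{1}{1-c^2}\Big(-c(a^j + c) + a^j(a^{d+1-j} + c)\Big).$$
The cross terms $-c\,a^j$ and $c\,a^j$ cancel, and the crucial relation $a^j a^{d+1-j} = a^{d+1} = 1$ (recall $a$ has order $d+1$ in $\mathcal A$) collapses the bracket to $1 - c^2$, so $(Y\tilde v)_j = 1_{(d+1)^{n-1}}$, matching the $j$-th block of $\underline 1_d$.

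The only genuine subtlety — and the one step requiring a separate argument — is the middle index when $d$ is odd, i.e. $j = \tfrac{d+1}{2}$, where the diagonal and anti-diagonal positions coincide and the entry becomes $Y_{jj} = a^{(d+1)/2} - c$. Here I would use the difference-of-squares identity
$$(Y\tilde v)_j = (a^j - c)\tilde v_j = \frac{1}{1-c^2}(a^j - c)(a^j + c) = \frac{1}{1-c^2}\big(a^{2j} - c^2\big),$$
and since $2j = d+1$ gives $a^{2j} = 1$, this again equals $1$. This is precisely the same even/odd bifurcation that appeared in the determinant computation, so no new ideas are needed. I do not expect any real obstacle: the argument is a mechanical substitution, and the entire content is organizing the case split and invoking $a^{d+1}=1$ at the right moment. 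With $(Y\tilde v)_j = 1$ established in every case, summing the block entries of $\tilde v$ yields $\Gamma_Y = \frac{1}{1-(\mu+\lambda)^2}\big(\sum_{i=1}^d a^i + d(\mu+\lambda)\big)$, as claimed.
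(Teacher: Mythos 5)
Your proposal is correct and follows essentially the same route as the paper's proof: a direct block-wise verification that the claimed vector solves $Y\cdot v = \underline{1}_d$, using the cancellation of the cross terms and $a^{j}a^{d+1-j}=a^{d+1}=1$, with the same separate treatment of the middle index $j=\tfrac{d+1}{2}$ when $d$ is odd via the difference-of-squares identity. The only cosmetic difference is that you carry the normalization $\tfrac{1}{1-(\mu+\lambda)^2}$ through the computation while the paper verifies $Y\tilde v = (1-(\mu+\lambda)^2)\underline{1}_d$ for the unnormalized vector.
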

\begin{proof}
This fact is again easily checked by calculations. Let $\tilde{v} := (a^i + \mu + \lambda)_{i=1}^d$; for every $i\neq (d+1)/2$ it holds that
$$(Y\cdot \tilde{v})_i = -(\mu + \lambda) (a^i + \mu + \lambda) + a^i (a^{d-i+1} + \mu + \lambda) = 1 - (\mu + \lambda)a^i - (\mu + \lambda)^2 + a^i(\mu + \lambda) = 1 - (\mu + \lambda)^2.$$
In case $d$ is odd and $i = (d+1)/2$ we have
$$(Y\cdot \tilde{v})_i = (a^{(d+1)/2} - (\mu + \lambda))(a^{(d+1)/2} + \mu + \lambda) = 1 - (\mu +\lambda)^2$$
thus showing the claim.
\end{proof}

In order to obtain the determinant of $X_0$ we need to calculate the determinant of $-(\mu + \lambda) + a^{\frac{d+1}{2}}$ for $d$ odd now.
\begin{lemma}
Assume $d$ is odd. Then
$$\det(-(\mu + \lambda) + a^{\frac{d+1}{2}}) = (\mu + \lambda+1)^{\frac{d+1}{2}}(\mu + \lambda - 1)^{\frac{d+1}{2}}.$$
\end{lemma}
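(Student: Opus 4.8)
The plan is to diagonalize the single permutation matrix $a^{(d+1)/2}$ and read the determinant off its eigenvalues. Throughout write $s := \mu + \lambda$, so the matrix in question is $a^{(d+1)/2} - s\,1_{d+1}$, a genuine $(d+1)\times(d+1)$ complex matrix. Recall that in this subsection $a = a_0$ is the cyclic permutation matrix of order $d+1$, i.e. $a\,e_j = e_{j-1}$ (indices mod $d+1$), so $a^{d+1} = 1_{d+1}$.

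First I would observe that $a^{(d+1)/2}$ is an \emph{involution}: since $d$ is odd, $(d+1)/2$ is an integer and
\[
\bigl(a^{(d+1)/2}\bigr)^2 = a^{d+1} = 1_{d+1}.
\]
Hence $a^{(d+1)/2}$ is diagonalizable over $\mathbb{C}$ with all eigenvalues in $\{+1,-1\}$. To pin down the two multiplicities I would compute its trace: $a^{(d+1)/2}$ is the permutation matrix of the shift $i \mapsto i + (d+1)/2$ on $\mathbb{Z}/(d+1)\mathbb{Z}$, which is fixed-point free because $0 < (d+1)/2 < d+1$, and therefore has trace $0$. An involution on a $(d+1)$-dimensional space with vanishing trace must have $+1$ and $-1$ occurring with equal multiplicity, namely $(d+1)/2$ each.

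It then follows that the eigenvalues of $a^{(d+1)/2} - s\,1_{d+1}$ are $1 - s$ and $-1 - s$, each with multiplicity $(d+1)/2$, so that
\[
\det\bigl(-(\mu+\lambda) + a^{(d+1)/2}\bigr) = (1 - s)^{(d+1)/2}\,(-1 - s)^{(d+1)/2}.
\]
Factoring $-1$ out of each factor and using that $d+1$ is even, so $(-1)^{d+1} = 1$, rewrites this as $(s-1)^{(d+1)/2}(s+1)^{(d+1)/2}$; resubstituting $s = \mu+\lambda$ yields exactly the claimed $(\mu+\lambda+1)^{(d+1)/2}(\mu+\lambda-1)^{(d+1)/2}$.

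There is no genuine obstacle here: the entire statement reduces to the structural fact that $a^{(d+1)/2}$ is a fixed-point-free involution, whose $\pm1$ eigenspaces are consequently balanced. If one prefers to avoid spectral language altogether, the identical conclusion follows by reordering the standard basis so that each orbit $\{i,\,i+(d+1)/2\}$ of the shift forms a $2\times2$ block $\left(\begin{smallmatrix} -s & 1 \\ 1 & -s \end{smallmatrix}\right)$ of determinant $s^2 - 1 = (s-1)(s+1)$, and multiplying the $(d+1)/2$ block determinants. The only point that repays a moment's care is the parity bookkeeping $(-1)^{d+1} = 1$ reconciling the two sign conventions.
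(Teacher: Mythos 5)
Your proof is correct, but it takes a different route from the paper. The paper mimics its earlier treatment of $\det_{\mathcal{A}} Y$: it applies row elimination to bring $-(\mu+\lambda)1_{d+1} + a^{\frac{d+1}{2}}$ into upper-triangular form with diagonal entries $-(\mu+\lambda)$ and $\beta = -\mu-\lambda+\frac{1}{\mu+\lambda}$, each occurring $\frac{d+1}{2}$ times, and then multiplies the diagonal to get $\big(-(\mu+\lambda)\beta\big)^{\frac{d+1}{2}} = \big((\mu+\lambda)^2-1\big)^{\frac{d+1}{2}}$. You instead exploit the structure of $a^{\frac{d+1}{2}}$ as a fixed-point-free involution: diagonalizability with eigenvalues $\pm 1$, trace zero forcing equal multiplicities $\frac{d+1}{2}$, and then the determinant as a product over the shifted spectrum (or, in your alternative phrasing, a block-diagonalization into $\frac{d+1}{2}$ blocks $\left(\begin{smallmatrix} -s & 1 \\ 1 & -s\end{smallmatrix}\right)$). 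Your argument buys two things: it avoids the division by $\mu+\lambda$ hidden in the paper's $\beta$, so no implicit genericity or polynomial-identity argument is needed to cover the locus $\mu+\lambda = 0$; and it makes the combinatorial reason for the balanced exponents transparent (the shift by $\frac{d+1}{2}$ on $\mathbb{Z}/(d+1)\mathbb{Z}$ decomposes into $\frac{d+1}{2}$ transpositions). What the paper's elimination approach buys is uniformity with the adjacent computations (the triangularizations of $Y$ for $d$ even and odd in Proposition \ref{prop:detAX}), so the same mechanical step is reused three times. One small simplification available to you: rather than factoring signs out of $(1-s)^{\frac{d+1}{2}}(-1-s)^{\frac{d+1}{2}}$ and invoking $(-1)^{d+1}=1$, you can pair the factors as $\big((1-s)(-1-s)\big)^{\frac{d+1}{2}} = (s^2-1)^{\frac{d+1}{2}}$, which removes the parity bookkeeping entirely.
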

\begin{proof}
Similar to the upper-triangular form of $Y$ we might bring this matrix into the upper-triangular form
$$\begin{pmatrix}
-(\mu + \lambda) & & & 1 & & \\
 & \ddots & & & \ddots & \\
 & & -(\mu + \lambda) & & & 1 \\
0 & & & \beta & & \\
 & \ddots & & & \ddots & &\\
 & & 0 & & & \beta &
 \end{pmatrix}$$
for $\beta = -\mu - \lambda + \frac{1}{\mu + \lambda} = \frac{1 - (\mu + \lambda)^2}{\mu + \lambda}$.
Consequently 
$$\det(-(\mu + \lambda) + a^{\frac{d+1}{2}}) = \big(-(\mu + \lambda)\cdot \beta \big)^{\frac{d+1}{2}} = \big((\mu + \lambda - 1)(\mu + \lambda + 1)\big)^{\frac{d+1}{2}}$$
showing the claim.
\end{proof}
Thus we are ready to calculate the determinant of $X_0$.

\begin{proposition}\label{prop:detX}
Let $\phi(\mu, \lambda) := \mu^2 - (d-1)\lambda^2 - (d-2)\lambda\mu - 1$, then
$$\det X_0 = (\mu - (d-1)\lambda - 1)(\mu + \lambda + 1)(\phi(\mu, \lambda) + \lambda)^d ((\mu + \lambda)^2 - 1)^{\binom{d+1}{2} - (d+1)}.$$
\end{proposition}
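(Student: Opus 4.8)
The plan is to reduce everything to the block-determinant factorization $\det X_0 = \det\big(\det_{\mathcal{A}} X_0\big)$ of Silvester and then evaluate each factor of the expression for $\det_\mathcal{A} X_0$ furnished by Proposition \ref{prop:detAX} under the ordinary determinant $\det:\mathcal{A}\to\mathbb{C}$ (the determinant of the $(d+1)\times(d+1)$ matrix representing an algebra element), which is multiplicative on $\mathcal{A}$. Three kinds of factors occur: the coronal term $1+\lambda\Gamma_Y(\mu,\lambda)$, the scalar power $\big((\mu+\lambda-1)(\mu+\lambda+1)\big)^{\lfloor d/2\rfloor}$, and, only for odd $d$, the element $-\mu-\lambda+a^{(d+1)/2}$.

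First I would treat the coronal factor. Substituting the formula for $\Gamma_Y$ from Lemma \ref{lemm:coronY} and clearing the denominator $1-(\mu+\lambda)^2$, the element $1+\lambda\Gamma_Y$ becomes the quotient of $N := \nu\cdot 1_{d+1}+\lambda\sum_{i=1}^d a^i$ by the scalar $1-(\mu+\lambda)^2$, where $\nu := 1-(\mu+\lambda)^2+d\lambda(\mu+\lambda)$. The numerator $N$ has exactly the shape treated by Lemma \ref{lemm:detx}, so $\det N=(\nu+d\lambda)(\nu-\lambda)^d$. A short computation factors the two ingredients as $\nu+d\lambda=-(\mu+\lambda+1)\big(\mu-(d-1)\lambda-1\big)$ and $\nu-\lambda=-(\phi(\mu,\lambda)+\lambda)$, the latter by direct expansion against the definition of $\phi$. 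Dividing by $\det\big((1-(\mu+\lambda)^2)1_{d+1}\big)=(1-(\mu+\lambda)^2)^{d+1}$ and cancelling the two factors of $(-1)^{d+1}$ yields
$$\det(1+\lambda\Gamma_Y)=\frac{(\mu+\lambda+1)\big(\mu-(d-1)\lambda-1\big)\big(\phi(\mu,\lambda)+\lambda\big)^d}{\big((\mu+\lambda)^2-1\big)^{d+1}}.$$

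The remaining factors are immediate: a scalar $c\cdot 1_{d+1}$ has ordinary determinant $c^{d+1}$, so the power $\big((\mu+\lambda)^2-1\big)^{\lfloor d/2\rfloor}$ contributes $\big((\mu+\lambda)^2-1\big)^{(d+1)\lfloor d/2\rfloor}$, and for odd $d$ the preceding lemma gives $\det(-\mu-\lambda+a^{(d+1)/2})=\big((\mu+\lambda)^2-1\big)^{(d+1)/2}$. Multiplying the three contributions, the two linear factors and $(\phi+\lambda)^d$ survive unchanged, and the whole computation reduces to collecting the exponent of $\big((\mu+\lambda)^2-1\big)$. I would verify this exponent separately in the two parities — for even $d$ it is $-(d+1)+(d+1)\tfrac d2$, for odd $d$ it is $-(d+1)+(d+1)\tfrac{d-1}2+\tfrac{d+1}2$ — and observe that both collapse to $(d+1)\tfrac{d-2}2=\binom{d+1}{2}-(d+1)$, giving the stated formula uniformly across parities.

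The only genuine obstacle is bookkeeping rather than conceptual content: keeping the signs consistent through $\nu+d\lambda$ and $(\nu-\lambda)^d=(-1)^d(\phi+\lambda)^d$ so that the $(-1)^{d+1}$ from $\det N$ cancels against the $(-1)^{d+1}$ hidden in $(1-(\mu+\lambda)^2)^{d+1}$, and confirming that the even and odd exponent counts truly coincide. A useful sanity check is that the resulting exponent $\binom{d+1}{2}-(d+1)$ is non-negative exactly for $d\ge 2$ and vanishes at $d=2$, in agreement with the standing hypothesis $d>1$ of this section.
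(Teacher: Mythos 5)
Your proposal is correct and follows essentially the same route as the paper's own proof: both pass through Silvester's factorization $\det X_0=\det(\det_\mathcal{A}X_0)$, evaluate the coronal factor by clearing the scalar denominator $1-(\mu+\lambda)^2$ and applying Lemma \ref{lemm:detx} to $-\phi(\mu,\lambda)\cdot 1_{d+1}+\lambda\sum_i a^i$, use the auxiliary lemma for $\det(-(\mu+\lambda)+a^{(d+1)/2})$ in the odd case, and finish with the factorization $\phi-d\lambda=(\mu-(d-1)\lambda-1)(\mu+\lambda+1)$. The only difference is bookkeeping order — the paper merges the exponents into $\binom{d+1}{2}$ before dividing out $(1-(\mu+\lambda)^2)^{d+1}$, whereas you collect the exponent of $(\mu+\lambda)^2-1$ parity-by-parity at the end — which is an immaterial rearrangement of the same computation.
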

\begin{proof}
Combining the last two lemmata with Proposition \ref{prop:detAX} we first obtain for $d$ odd
\begin{eqnarray*}
\det \text{det}_\mathcal{A} X_0 &=& \det (1 + \lambda \Gamma_Y(\mu, \lambda)) \big((\mu + \lambda - 1)(\mu + \lambda + 1)\big)^{(d+1)\cdot (d-1)/2} \cdot \big((\mu + \lambda - 1)(\mu + \lambda + 1)\big)^{(d+1)/2}\\
&=& \det(1 + \lambda \Gamma_Y(\mu, \lambda)) \big((\mu + \lambda - 1)(\mu + \lambda + 1)\big)^{\binom{d+1}{2}}\\
&=& \det(1 + \lambda \Gamma_Y(\mu, \lambda)) \big((\mu + \lambda)^2 - 1\big)^{\binom{d+1}{2}},
\end{eqnarray*}
while for $d$ even we can directly infer this equality from the proposition.

Now note that by Lemma \ref{lemm:coronY} the left-most term becomes
$$\det(1 + \lambda\Gamma_Y(\mu, \lambda)) = \frac{1}{(1 - (\mu + \lambda)^2)^{d+1}}\det(1 - (\mu + \lambda)^2 + d\lambda(\mu + \lambda) + \lambda\sum_{i=1}^d a^i).$$
Note that
$$1 - (\mu + \lambda)^2 + d\lambda(\mu + \lambda) = 1 - \mu^2 + (d-2)\lambda\mu + (d-1)\lambda^2 = -\phi(\mu, \lambda).$$
This determinant has been determined in Lemma \ref{lemm:detx} - yielding
$$\text{det}(\phi(\mu, \lambda) + \lambda\sum_{i=1}^d a^i) = (-\phi(\mu, \lambda) + d\lambda)(-\phi(\mu, \lambda) - \lambda)^d.$$

In total we obtain
$$\det\text{det}_\mathcal{A} X_0 = (-1)^{d+1}(-\phi(\mu, \lambda) + d\lambda)(-\phi(\mu, \lambda) - \lambda)^d ((\mu + \lambda)^2 - 1)^{\binom{d+1}{2} - (d+1)}$$
and thus the postulated form follows from the easy to verify factorization
$$\phi(\mu, \lambda) - d\lambda = (\mu - (d-1)\lambda - 1)(\mu + \lambda + 1).$$
\end{proof}

\subsection{Unidimensional Spectral Decimation of $\det \Xi_n$}
In the preceding section we have deduced a spectral connection between subsequent subdivision steps, i.e. from the recursion presented in Corollary \ref{cor:bidim_spectral} we are able to compute a factorization of the complete auxiliary spectrum - which is the set of roots of $D_n(\mu, \lambda)$ in $\mathbb{R}^2$.

This spectral set can be decomposed into hyperbolae as we will see which provides us with a way to also deduce the unidimensional spectral decimation stated in Theorem \ref{th:conespectrum}. So what we will show now is that the same procedure as in \cite{grigorchuk} is applicable to the case of arbitrary $d$, i.e. the coefficient changes and the term $(d-2)\lambda\mu$ which will appears in $\mu'$ for $d > 2$ does not form an obstruction to spectral decimation.

The main tool for the deduction of unidimensional spectral decimation for $d=2$ in \cite{grigorchuk} is semi-conjugacy of the renormalization $F$ to $f:\mathbb{R}\rightarrow\mathbb{R}; ~ \zeta\mapsto \zeta^2 - \zeta - 3$. Semi-conjugacy means that there is a suitable way to map $\mathbb{R}^2$ to $\mathbb{R}$ so that this parameter mapping $\Psi$ identifies $F$ with $f$.

This semi-conjugacy will in fact stay intact for $d > 2$, though for the unidimensional map
$$f(\zeta) = \zeta^2 - (d-1)\zeta - (d+1)$$
and the parameter mapping
$$\Psi(\mu, \lambda) := \frac{\mu^2 - 1 - (d-1)\lambda\mu - d\lambda^2}{\lambda} =: \frac{\Phi(\mu, \lambda)}{\lambda}$$
as will be shown in the following lemma.

\begin{lemma}
$F$ is semi-conjugate to $f$ over $\Psi$, i.e.
$$\Psi\circ F = f\circ \Psi.$$
\end{lemma}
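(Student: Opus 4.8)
The claim is an identity of rational functions in the two variables $\mu,\lambda$, with $\zeta:=\Psi(\mu,\lambda)=\Phi(\mu,\lambda)/\lambda$ serving as a shorthand; the plan is to verify it by direct substitution of the factorised forms of $\mu'$ and $\lambda'$ established in the preceding lemma, organised around a few structural identities so as to avoid a blind expansion. The starting observation is that $\Phi$ factors as $\Phi(\mu,\lambda)=(\mu-d\lambda)(\mu+\lambda)-1$, and this is precisely what makes the quadratic $f$ appear: since the same algebraic shape holds verbatim at $(\mu',\lambda')$, we obtain $\Phi(\mu',\lambda')=(\mu'-d\lambda')(\mu'+\lambda')-1$, so the whole computation reduces to controlling the single combination $\mu'-d\lambda'$ together with $\lambda'$.

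Write $P:=(d-1)\lambda-\mu+1$ and $\alpha:=(d-1)\lambda^2+(d-2)\lambda\mu-\lambda-\mu^2+1$ for the two factors of the common denominator, and $N:=(d-1)\lambda^2+(d-2)\lambda\mu-\mu^2+\mu$ for the numerator occurring in $\mu'$. The first key identity is $N=\alpha+(\mu+\lambda-1)$. Substituting it into $\mu'=\mu+d\lambda^2 N/(P\alpha)$ and recognising $\lambda^2(\mu+\lambda-1)/(P\alpha)=\lambda'$ gives the clean reduction
$$\mu'-d\lambda'=\mu+\frac{d\lambda^2}{P}=:R,$$
whence $\Psi(\mu',\lambda')=(R^2-1)/\lambda'+(d+1)R$. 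The second key identity is $\alpha+\Phi=-\lambda(\mu+\lambda+1)$, equivalently $\alpha=-\lambda(\zeta+\mu+\lambda+1)$, which simplifies $\lambda'$ and, combined with the resulting $R=(\mu-1-\Phi)/P$, produces the convenient form $R+1=\lambda(d-1-\zeta)/P$.

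With these in hand I would assemble $\Psi(\mu',\lambda')=(R-1)(R+1)/\lambda'+(d+1)R$, cancel the factors of $P$ and $\lambda$ contributed by $R+1$ and by $1/\lambda'$, clear the remaining denominator $(\mu+\lambda-1)$, and substitute $\zeta\lambda=\Phi$ to turn the goal into a polynomial identity in $\mu,\lambda$ that should coincide with $\zeta^2-(d-1)\zeta-(d+1)$. The main obstacle is exactly this final collapse: the intermediate quantities $R$, $P$, $\lambda'$ individually depend on $\mu$ and not merely on $\zeta$, so one must check that all the genuinely $\mu$-dependent cross terms cancel. This cancellation is forced by the hidden involution $\mu\mapsto(d-1)\lambda-\mu$, which fixes $\Phi$ (hence $\zeta$) but none of $P$, $\alpha$, $\lambda'$; it is the reason the term $(d-2)\lambda\mu$ — the only new feature for $d>2$ relative to the $d=2$ computation of \cite{grigorchuk} — does not obstruct semi-conjugacy. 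In practice I would verify the resulting polynomial identity either by a direct expansion grouped by powers of $\mu$, or by a symbolic-algebra check, and sanity-check it at explicit points (for instance $(\mu,\lambda)=(4,1)$, where both sides agree with $\zeta^2-(d-1)\zeta-(d+1)$).
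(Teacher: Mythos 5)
Your proposal is correct, and all of its structural identities check out: $N=\alpha+(\mu+\lambda-1)$, $\mu'-d\lambda'=R=\mu+d\lambda^2/P$, $\alpha+\Phi=-\lambda(\mu+\lambda+1)$, and $R+1=\lambda(d-1-\zeta)/P$ are all valid, and the final collapse does go through — writing $R-1=-(\mu-d\lambda-1)(\mu+\lambda-1)/P$, the whole identity reduces to $\Phi-(\mu-d\lambda-1)(\mu+\lambda+1)=(d+1)\lambda$, which is immediate from your factorization $\Phi=(\mu-d\lambda)(\mu+\lambda)-1$. This is essentially the same approach as the paper, whose entire proof consists of the remark that the claim "can be verified by high-school algebra"; your write-up simply supplies the organized computation that the paper omits.
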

\begin{proof}
This claim can be verified by high-school algebra.
\end{proof}

Now in order to obtain spectral decimation we only need to analyze the behaviour of the factors of $\det X_0$ under renormalization $F$. To this end we will use the semi-conjugacy of $F$ to the $1$-dimensional map $f$.

Analogously to \cite{grigorchuk} let
$$\Phi_\theta(\mu, \lambda) := \Phi(\mu, \lambda) - \theta \lambda = \lambda(\Psi(\mu, \lambda) - \theta) = \mu^2 - 1 - (d-1)\lambda\mu - d\lambda^2 - \theta \lambda$$
$$L(\mu, \lambda) = \mu - (d-1)\lambda - 1$$
$$K(\mu, \lambda) = \phi(\mu, \lambda) + \lambda = \mu^2 - (d-1)\lambda^2 - (d-2)\lambda\mu + \lambda - 1$$
$$A_1(\mu, \lambda) = \lambda + \mu - 1$$
so that
$$\lambda' = \frac{\lambda^2 A_1(\mu, \lambda)}{L(\mu, \lambda)K(\mu, \lambda)}.$$
Then the semi-conjugacy gives us the following lemma as in \cite{grigorchuk}.

\begin{lemma}
Let $\theta\in [-2,d+1]$ and $\theta_0, \theta_1$ be the two distinct real roots of $f(x) - \theta$. Then we have
$$\frac{A_1}{LK}\Phi_{\theta_0} \Phi_{\theta_1} = \Phi_\theta\circ F.$$
\end{lemma}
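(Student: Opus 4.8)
The plan is to reduce the claimed identity to the already-established semi-conjugacy $\Psi\circ F = f\circ\Psi$ together with the fact that $f(x)-\theta$ is a monic quadratic whose two roots are $\theta_0,\theta_1$. Throughout I would work in the field of rational functions in $\mu,\lambda$, which is legitimate because $\Psi=\Phi/\lambda$ carries $\lambda$ in its denominator; every intermediate expression is then a well-defined rational function, and the final identity, once verified there, holds wherever both sides are defined.

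First I would unpack the right-hand side. By definition $\Phi_\theta=\Phi-\theta\lambda=\lambda(\Psi-\theta)$, and since $\Phi=\lambda\Psi$ this persists after substituting $F=(\mu',\lambda')$: one gets $\Phi_\theta\circ F=\Phi(\mu',\lambda')-\theta\lambda'=\lambda'\bigl(\Psi(\mu',\lambda')-\theta\bigr)$. Now invoke the semi-conjugacy from the preceding lemma, $\Psi(\mu',\lambda')=(\Psi\circ F)(\mu,\lambda)=f(\Psi(\mu,\lambda))$, to rewrite this as $\Phi_\theta\circ F=\lambda'\bigl(f(\Psi)-\theta\bigr)$.

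Next I would factor. Because $f(x)=x^2-(d-1)x-(d+1)$ is monic of degree two and $\theta_0,\theta_1$ are by hypothesis the two roots of $f(x)-\theta$, we have the polynomial identity $f(x)-\theta=(x-\theta_0)(x-\theta_1)$; evaluating at $x=\Psi$ gives $f(\Psi)-\theta=(\Psi-\theta_0)(\Psi-\theta_1)$. Each factor is again a rescaled $\Phi$, namely $\Psi-\theta_i=\Phi_{\theta_i}/\lambda$, so $(\Psi-\theta_0)(\Psi-\theta_1)=\Phi_{\theta_0}\Phi_{\theta_1}/\lambda^2$. Combining, $\Phi_\theta\circ F=\lambda'\,\Phi_{\theta_0}\Phi_{\theta_1}/\lambda^2$.

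Finally I would substitute the explicit renormalization coordinate $\lambda'=\lambda^2 A_1/(LK)$; the factor $\lambda^2$ cancels and leaves exactly $\frac{A_1}{LK}\Phi_{\theta_0}\Phi_{\theta_1}$, the asserted equality. The only real content is the semi-conjugacy, which is assumed; the remaining steps are bookkeeping, and the single point to watch is the correct tracking of the two powers of $\lambda$ introduced by writing $\Phi_\theta=\lambda(\Psi-\theta)$ and cancelled by $\lambda'$. I do not expect a genuine obstacle here—the hypothesis $\theta\in[-2,d+1]$ enters only to guarantee that $\theta_0,\theta_1$ are real and distinct (via the discriminant of $f(x)-\theta$), whereas the algebraic identity itself is valid as a factorization irrespective of reality.
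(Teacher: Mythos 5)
Your proof is correct and is essentially identical to the paper's: both arguments write $\Phi_\theta\circ F=\lambda'(\Psi\circ F-\theta)$, invoke the semi-conjugacy $\Psi\circ F=f\circ\Psi$, factor the monic quadratic as $f(\Psi)-\theta=(\Psi-\theta_0)(\Psi-\theta_1)$, and substitute $\lambda'=\lambda^2A_1/(LK)$ so that the two powers of $\lambda$ cancel against $\Phi_{\theta_i}=\lambda(\Psi-\theta_i)$. Your write-up merely spells out the bookkeeping that the paper compresses into a single chain of equalities.
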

\begin{proof}
It holds that
$$\Phi_{\theta}\circ F = \lambda'(\Psi\circ F - \theta) = \lambda'(f\circ \Psi - \theta) = \frac{\lambda^2 A_1}{LK}(\Psi - \theta_0)(\Psi - \theta_1) = \frac{A_1}{LK}\Phi_{\theta_0}\Phi_{\theta_1}.$$
\end{proof}

One last thing that remains to calculate is the initial polynomial $D_1(\mu, \lambda)$. Note that
$$\Xi_1(\mu, \lambda) = (-\mu + 1)1_{d+1} + \lambda\sum_{i=1}^d a^i$$
so that
$$D_1(\mu, \lambda) = -(\mu - 1 - d\lambda)(-\mu+1-\lambda)^d = \underbrace{(-1)^{d+1}(\mu - 1 - d\lambda)}_{=: D_0(\mu, \lambda)} A_1^d.$$

Now let
$$A_n(\mu, \lambda) = \begin{cases}
\mu + \lambda - 1 &, ~ n = 1\\
\prod_{\theta\in f^{-(n-2)}(0)} \Phi_\theta &, n > 1
\end{cases}$$
$$B_n(\mu, \lambda) = \begin{cases}
\mu + \lambda + 1 &, n = 2\\
\prod_{\theta\in f^{-(n-3)}(-2)} \Phi_\theta &, n > 2
\end{cases}$$
so that by Proposition \ref{prop:detX} it holds true that
$$\det X_0 = L B_2 K^d (A_1B_2)^{\binom{d+1}{2} - (d+1)}.$$

Note here that the quadratic maps $\Phi_\theta$ are defining the hyperbolae in which the bidimensional auxiliary spectrum can be decomposed, i.e. in which $D_n$ is factorizable (see Proposition \ref{prop:factorcone}). Now in order to factorize $D_n$ into $A_i$'s and $B_i$'s we need to state the behaviour of the factors of $\det X_0$ under composition with $F$.

\begin{lemma}
The following relations with respect to $F$ hold:
\begin{itemize}
    \item[] $$D_0\circ F = \frac{D_0}{L} A_1$$
    \item[] $$A_1\circ F = \frac{A_1}{K} A_2$$
    \item[] For $n\geq 2$:
    $$A_n\circ F = \Big(\frac{A_1}{LK}\Big)^{2^{n-2}}A_{n+1}$$
    \item[] $$B_2 \circ F = \frac{B_2}{K} B_3$$
    \item[] For $n\geq 3$:
    $$B_n\circ F = \Big(\frac{A_1}{LK}\Big)^{2^{n-3}} B_{n+1}$$
\end{itemize}
\end{lemma}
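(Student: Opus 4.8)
The plan is to separate the five identities into two \emph{generic} recursions (the relations for $A_n$ with $n\ge 2$ and for $B_n$ with $n\ge 3$) and three \emph{degree-one} base identities (those for $D_0$, $A_1$ and $B_2$). The generic ones will fall out almost immediately from the semi-conjugacy $\Psi\circ F=f\circ\Psi$ once we reinterpret the defining products as iterates of $f$: since $f^{\circ k}$ is monic of degree $2^{k}$, the multiset $f^{-(n-2)}(0)$ of roots of $f^{\circ(n-2)}(x)=0$ gives
\[
A_n=\prod_{\theta\in f^{-(n-2)}(0)}\Phi_\theta=\lambda^{2^{n-2}}f^{\circ(n-2)}(\Psi),\qquad B_n=\lambda^{2^{n-3}}\bigl(f^{\circ(n-3)}(\Psi)+2\bigr),
\]
which are genuine polynomials in $\mu,\lambda$ and agree with the listed special values $A_2=\Phi_0$ and $B_3=\Phi_{-2}$.

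For the generic step I would combine these closed forms with $\Psi\circ F=f\circ\Psi$ and the established formula $\lambda'=\lambda^2 A_1/(LK)$. Substituting $F$ and pushing the outer iterate inside yields
\[
A_n\circ F=(\lambda')^{2^{n-2}}f^{\circ(n-2)}(\Psi\circ F)=(\lambda')^{2^{n-2}}f^{\circ(n-1)}(\Psi),
\]
and since $A_{n+1}=\lambda^{2^{n-1}}f^{\circ(n-1)}(\Psi)$ while $(\lambda')^{2^{n-2}}=\lambda^{2^{n-1}}(A_1/LK)^{2^{n-2}}$, the powers of $\lambda$ cancel and this rearranges to $A_n\circ F=(A_1/LK)^{2^{n-2}}A_{n+1}$. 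The identical computation with the extra constant $+2$ gives $B_n\circ F=(A_1/LK)^{2^{n-3}}B_{n+1}$. This route needs no appeal to distinctness or simplicity of the preimages, since all multiplicities are absorbed into $f^{\circ k}$.

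The remaining work concentrates on the three degree-one identities, and here I would reduce everything to a single direct verification. I would check $A_1\circ F=\tfrac{A_1}{K}A_2$ by substituting the explicit forms of $\mu'$ and $\lambda'$ over their common denominator $LK$ and clearing; the claim becomes the polynomial identity $L\bigl(A_1\Phi-(\mu-1)K\bigr)=\lambda^2\bigl(dP+A_1\bigr)$, where $P:=(d-1)\lambda^2+(d-2)\lambda\mu-\mu^2+\mu$ is the numerator of the correction term of $\mu'$. The relation for $B_2$ then costs nothing: since $B_2\circ F-A_1\circ F=(\mu'+\lambda'+1)-(\mu'+\lambda'-1)=2$ and one checks the elementary factorization $A_1A_2+2K=B_2B_3$, one gets $B_2\circ F=A_1\circ F+2=\tfrac{A_1A_2+2K}{K}=\tfrac{B_2}{K}B_3$. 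Finally $D_0\circ F$ is obtained from the preceding lemma applied at $\theta=d+1$, whose $f$-preimage is $\{d+1,-2\}$, together with the factorization $\Phi_{d+1}=(-1)^{d+1}B_2D_0$ (a one-line degree-one division, mirrored by $\Phi_{-(d+1)}=A_1(\mu-d\lambda+1)$): this gives $(B_2\circ F)(D_0\circ F)=\tfrac{A_1}{LK}B_2D_0B_3$, into which the value of $B_2\circ F$ already found is divided to leave exactly $D_0\circ F=\tfrac{D_0}{L}A_1$.

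The main obstacle is the single brute-force identity for $A_1\circ F$: although its left side is a priori a rational function with denominator $LK$, the statement forces the factor $L$ to cancel, so the crux is verifying that $L$ divides $\lambda^2(dP+A_1)$. This is precisely the source of the cancellation, and it can be confirmed cleanly by evaluating on the locus $L=0$, i.e. $\mu=(d-1)\lambda+1$, where $A_1$ specializes to $d\lambda$ and $P$ to $-\lambda$, so that $dP+A_1$ vanishes; the full identity then follows by comparing the two sides as polynomials of degree four in $(\mu,\lambda)$. Everything else is bookkeeping organized by the semi-conjugacy.
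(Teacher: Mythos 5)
Your proposal is correct, and it diverges from the paper's treatment in an instructive way. For the recursions in $n$, the paper applies its preceding lemma ($\Phi_\theta\circ F=\tfrac{A_1}{LK}\Phi_{\theta_0}\Phi_{\theta_1}$) once per element of $f^{-(n-2)}(0)$ and collects the product; your closed forms $A_n=\lambda^{2^{n-2}}f^{\circ(n-2)}(\Psi)$ and $B_n=\lambda^{2^{n-3}}\bigl(f^{\circ(n-3)}(\Psi)+2\bigr)$ inline that lemma's own proof (semi-conjugacy plus $\lambda'=\lambda^2A_1/(LK)$) at the level of full iterates, which is the same mechanism but packaged so that multiplicities and the cardinality $\lvert f^{-(n-2)}(0)\rvert=2^{n-2}$ never need separate attention --- a point the paper leaves implicit (it tacitly needs all iterated preimages to be simple and to lie in $[-2,d+1]$, facts it only establishes later in Proposition 4.13). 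The real difference is in the three base identities, which the paper dismisses as ``high-school algebra'': you reduce them to a single polynomial verification, namely $L\bigl(A_1\Phi-(\mu-1)K\bigr)=\lambda^2(dP+A_1)$, and I confirm this identity holds (indeed $dP+A_1=-L(d\mu+d\lambda-1)$ and $A_1\Phi-(\mu-1)K=-\lambda^2(d\mu+d\lambda-1)$, so your evaluation-at-$L=0$ argument pinpoints exactly the right cancellation). Your derivations of the other two are also sound: $B_2\circ F=A_1\circ F+2$ together with the (correct) factorization $B_2B_3=A_1A_2+2K$ gives the $B_2$ relation, and the $\theta=d+1$ instance of the preceding lemma, $f^{-1}(d+1)=\{d+1,-2\}$, combined with $\Phi_{d+1}=(-1)^{d+1}B_2D_0$, gives the $D_0$ relation after dividing by $B_2\circ F$. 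What your route buys is economy and structure: one brute-force identity instead of three, with the other two exposed as consequences of factorizations of $\Phi_{\pm(d+1)}$; what the paper's route buys is brevity, since it simply delegates all three to direct computation.
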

\begin{proof}
The above lemma allows us to show the claims involving $n$ directly as
$$A_n\circ F = \prod_{\theta\in f^{-(n-2)}(0)} \Phi_\theta\circ F = \Big(\frac{A_1}{LK}\Big)^{2^{n-2}} \prod_{\theta\in f^{-(n-1)}(0)} \Phi_\theta = \Big(\frac{A_1}{LK}\Big)^{2^{n-2}} A_{n+1}.$$
The respective claim for $B$ can be shown in a similar fashion. The claims not involving $n$ can again be verified by high-school algebra.
It should be noted that $A_2 = \Phi$, $B_3 = \Phi + 2 \lambda$
\end{proof}

\begin{proposition}\label{prop:factorcone}
The determinant $D_n(\mu, \lambda)$ factorizes as
$$D_1 = D_0 A_1^d$$
$$D_n = D_0 A_1^{\alpha_n}...A_n^{\alpha_1}B_2^{\beta_n}...B_n^{\beta_2}$$
for $n \geq 2$, where the sequences $(\alpha_n)_{n\geq 1}$, $(\beta_n)_{n\geq 2}$ are given by
$$\alpha_n = \beta_n + d, \quad\quad \beta_n = \frac{d-1}{2}((d+1)^{n-1}-1)$$
for $n \geq 2$ and $\alpha_1 = d$.
\end{proposition}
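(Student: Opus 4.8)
The plan is to prove the factorization by induction on $n$, substituting the recursion of Corollary~\ref{cor:bidim_spectral} into the induction hypothesis and reading off the exponent of each factor $D_0,L,K,A_k,B_k$ on both sides. The first thing to pin down is the precise $n$-dependence of the factor $\det X$ in Corollary~\ref{cor:bidim_spectral}. The blocks of $X$ at level $n$ are the same polynomials in $a_{n-1}$ that the blocks of $X_0$ are in $a_0$, so Silvester's identity gives $\det X = \det\big(\det_{\mathcal{A}_{n-1}}X\big)=\det\big(P(a_{n-1})\big)$ with $P(a_0)=\det_{\mathcal{A}_0}X_0$; since $a_{n-1}=a_0\otimes 1_{(d+1)^{n-2}}$ and $\det(M\otimes 1_k)=(\det M)^k$ this yields $$\det X = (\det X_0)^{(d+1)^{n-2}}, \qquad n\ge 2.$$ Hence the recursion I actually iterate is $D_n = (\det X_0)^{(d+1)^{n-2}}\,D_{n-1}\circ F$, with $\det X_0 = L\,B_2\,K^{d}(A_1B_2)^{m}$, $m=\binom{d+1}{2}-(d+1)$, by Proposition~\ref{prop:detX}. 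The base cases are the given $D_1=D_0A_1^d$ and a direct check of $D_2$: expanding $D_1\circ F=(D_0\circ F)(A_1\circ F)^d$ with the transformation lemma and multiplying by $\det X_0$ gives $D_2=D_0A_1^{m+d+1}A_2^dB_2^{m+1}$, which is the claim once one notes $m+1=\tfrac{d(d-1)}{2}=\beta_2$ and $\alpha_2=\beta_2+d$.

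For the inductive step I would expand $D_{n-1}\circ F$ by applying the transformation lemma to each factor of the induction hypothesis, multiply by $(\det X_0)^{(d+1)^{n-2}}$, and collect the exponent of each irreducible factor separately. The exponent of $D_0$ ($=1$), of $A_k$ for $2\le k\le n$ (produced only by the shift $A_{k-1}\circ F\mapsto A_k$, hence $\alpha_{n-k+1}$) and of $B_k$ for $3\le k\le n$ (from $B_{k-1}\circ F\mapsto B_k$, hence $\beta_{n-k+2}$) agree with the claim immediately, as $\det X_0$ supplies no such factors. All the real bookkeeping sits in $A_1$, $B_2$ and in the auxiliary factors $L,K$, which must cancel completely since the target contains neither.

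Writing $p:=(d+1)^{n-2}$, the transformation lemma produces $L$ only through the combinations $A_1/L$ and $(A_1/(LK))^{2^{\bullet}}$, so the $L$-exponent of $D_{n-1}\circ F$ is $-S_L$ while its $A_1$-exponent is $S_L+\alpha_{n-1}$ (the surplus $A_1^{\alpha_{n-1}}$ coming from the $L$-free rule $A_1\circ F=A_1A_2/K$), where $$S_L := 1+\sum_{i=2}^{n-1}2^{\,i-2}\alpha_{n-i}+\sum_{j=3}^{n-1}2^{\,j-3}\beta_{n-j+1}.$$ Consequently the $L$-, $A_1$- and $B_2$-exponents of $D_n$ are $p-S_L$, $S_L+\alpha_{n-1}+mp$ and $\beta_{n-1}+(m+1)p$, and the $K$-exponent is $dp-S_K$ for the analogous sum $S_K$. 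The claim thus reduces to the three identities $S_L=p$, $S_K=dp$, and $\alpha_n-\alpha_{n-1}=\beta_n-\beta_{n-1}=(m+1)p$. The last is immediate from the closed forms, since $\beta_n-\beta_{n-1}=\tfrac{d-1}{2}\,d\,(d+1)^{n-2}=(m+1)p$ and $\alpha_n=\beta_n+d$; and $S_K=dp$ reduces to $S_L=p$ via $S_K-S_L=\alpha_{n-1}+\beta_{n-1}-1=(d-1)(d+1)^{n-2}=(d-1)p$, again a one-line consequence of the closed forms.

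The crux, and the step I expect to be the main obstacle, is therefore the single identity $S_L=(d+1)^{n-2}$, i.e.\ that the $L$-denominators accumulated along the recursion are exactly absorbed by the $L^{(d+1)^{n-2}}$ furnished by $(\det X_0)^{(d+1)^{n-2}}$. I would prove it by establishing the recursion $S_L(n)=2\,S_L(n-1)+(d-1)(d+1)^{n-3}$: after reindexing so that $S_L(n)=1+\sum_{k=1}^{n-2}2^{\,n-k-2}\alpha_k+\sum_{\ell=2}^{n-2}2^{\,n-\ell-2}\beta_\ell$, the difference $S_L(n)-2S_L(n-1)$ cancels all but the top term of each sum and equals $-1+\alpha_{n-2}+\beta_{n-2}=(d-1)(d+1)^{n-3}$ by the closed forms (with the harmless convention $\beta_1:=0$). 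Since $(d+1)^{n-2}$ obeys the same recursion, as $2+(d-1)=d+1$, with matching base value $S_L(2)=1$, the identity follows. With $S_L=p$ and $S_K=dp$ in hand the $A_1$- and $B_2$-exponents collapse to $\alpha_n$ and $\beta_n$ while the $L$- and $K$-exponents vanish, closing the induction.
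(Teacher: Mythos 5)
Your proof is correct and takes essentially the same route as the paper: iterate the recursion $D_n=(\det X_0)^{(d+1)^{n-2}}\,D_{n-1}\circ F$, expand with the transformation lemma, and reduce everything to the cancellation identity $S_L=(d+1)^{n-2}$, which is exactly the paper's $\sigma_n=(d+1)^{n-2}-1$ (your $S_L=1+\sigma_n$), proven there by the same two-term recursion $\sigma_n=\alpha_{n-2}+\beta_{n-2}+2\sigma_{n-1}$. The only differences are organizational rather than mathematical: you verify the closed-form exponents directly instead of first deriving and solving recursions for unknown exponents (the paper's Lemma \ref{lemm:sequences}), and you explicitly justify $\det X=(\det X_0)^{(d+1)^{n-2}}$ via the Kronecker-product/Silvester argument, a step the paper uses tacitly.
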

\begin{proof}
For $n = 1$ we have shown the factorization of $D_1$ above.

We now proceed by induction. In order to ease notation we denote for a function $g(\mu, \lambda)$ the renormalized function $g\circ F$ by $g'$.

Let $n \geq 2$ and assume the factorization holds for $n-1$. By the above we have
\begin{eqnarray*}
D_n &=& \Big(LB_2 K^d (A_1B_2)^{\binom{d+1}{2}-(d+1)} \Big)^{(d+1)^{n-2}} D_{n-1}'\\
&=& \Big(LB_2 K^d (A_1B_2)^{\binom{d+1}{2}-(d+1)} \Big)^{(d+1)^{n-2}} D_0'\cdot (A_1')^{\alpha_{n-1}}\cdot...\cdot (A_{n-1}')^{\alpha_1} \cdot (B_2')^{\beta_{n-1}} \cdot ... \cdot (B_{n-1}')^{\beta_2}\\
&=&  \Big(LB_2 K^d (A_1B_2)^{\binom{d+1}{2}-(d+1)} \Big)^{(d+1)^{n-2}} \frac{D_0A_1}{L} \Big(\frac{A_1A_2}{K}\Big)^{\alpha_{n-1}} \Big(\frac{A_1}{LK}\Big)^{\sigma_n} A_3^{\alpha_{n-2}}...A_n^{\alpha_1} \Big(\frac{B_2B_3}{K}\Big)^{\beta_{n-1}} B_4^{\beta_{n-2}}...B_n^{\beta_2}
\end{eqnarray*}
where

\begin{equation}\label{eq:sigma}
    \sigma_n = (\alpha_{n-2} + 2 \alpha_{n-3} + ... + 2^{n-3}\alpha_1) + (\beta_{n-2} + 2 \beta_{n-3} + ... + 2^{n-4}\beta_2)
\end{equation}
for $n > 2$ and $\sigma_2 = 0$. Furthermore for ease of notation let $c_d := \binom{d+1}{2} - (d+1)$.

The above equation for $D_n$ implies that the following equations are necessary for the induction to hold:
\begin{equation}\label{eq:a}
    \alpha_n = c_d(d+1)^{n-2} + \sigma_n + \alpha_{n-1} + 1
\end{equation}
\begin{equation}\label{eq:b}
    \beta_n = (c_d + 1)(d+1)^{n-2} + \beta_{n-1}
\end{equation}

The values of $\alpha$, $\beta$ and $\sigma$ are determined in by the subsequent Lemma \ref{lemm:sequences} to be:
$$\beta_n = \frac{c_d+1}{d}((d+1)^{n-1}-1)$$
$$\alpha_n = \beta_n + d$$
$$\sigma_n = (d+1)^{n-2}-1$$
for $n > 2$. Further note that $\frac{c_d + 1}{d} = \frac{d-1}{2}$.

Now having the sequences $\alpha,\beta,\sigma$ at hand we can verify that these are also sufficient for the induction; this is done by showing that all copies of $L$ and $K$ cancel out. So for the $K$'s it must hold true that
$$d(d+1)^{n-2} = \alpha_{n-1} + \sigma_n + \beta_{n-1} = \Big(2\frac{c_d+1}{d} + 1\Big)(d+1)^{n-2} - \Big(2\frac{c_d+1}{d} + 1\Big) + d.$$
Taking into consideration that
$$2\frac{c_d+1}{d}+1 = d$$
we obtain the validity of the claim.

For the $L$'s we have to check
$$(d+1)^{n-2} = 1 + \sigma_n$$
which already has been verified before.

Thus after the proper reordering and canceling of the terms we obtain
$$D_n = D_0 A_1^{\alpha_n}...A_n^{\alpha_1} B_2^{\beta_n}...B_n^{\beta_2}$$
which is the claimed factorization.
\end{proof}

\begin{lemma}\label{lemm:sequences}
Equations (\ref{eq:sigma}), (\ref{eq:a}) and (\ref{eq:b}) together with the initial values
$$\alpha_1 = d, \quad \quad \beta_1 = 0$$
imply that
$$\sigma_n = (d+1)^{n-2} - 1,$$
$$\beta_n = \frac{c_d+1}{d}((d+1)^{n-1}-1),$$
$$\alpha_n = \beta_n + d.$$
\end{lemma}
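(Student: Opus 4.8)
The plan is to solve the three recurrences in a convenient order, decoupling them as much as possible and, crucially, folding the awkward weighted sum defining $\sigma_n$ into a simple two-term recurrence before running a joint induction.

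First I would dispose of $\beta_n$, since (\ref{eq:b}) is a self-contained first-order linear recurrence $\beta_n = (c_d+1)(d+1)^{n-2}+\beta_{n-1}$ with $\beta_1 = 0$. Summing the resulting geometric series gives
$$\beta_n = (c_d+1)\sum_{j=0}^{n-2}(d+1)^j = \frac{c_d+1}{d}\big((d+1)^{n-1}-1\big),$$
the claimed closed form. I would also record the identity $\frac{c_d+1}{d}=\frac{d-1}{2}$ (equivalently $c_d+1=\tfrac{d(d-1)}{2}$), which is what makes the later cancellations transparent.

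The key simplification is to replace the convolution-type definition (\ref{eq:sigma}) of $\sigma_n$ by a short recurrence. Peeling off the top-index terms $\alpha_{n-1}$ and $\beta_{n-1}$ from $\sigma_{n+1}$ and noticing that each remaining summand carries exactly one extra factor of $2$ relative to the corresponding summand of $\sigma_n$, I obtain, for $n\geq 2$,
$$\sigma_{n+1} = 2\sigma_n + \alpha_{n-1} + \beta_{n-1},\qquad \sigma_2 = 0.$$
This is the step I expect to be the main (though modest) obstacle, since it is the only place where one must manipulate the explicit weighted sum rather than a clean recursion; everything downstream is then routine algebra.

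With these in hand I would run a single joint strong induction on $n$ proving $\sigma_n = (d+1)^{n-2}-1$ and $\alpha_n = \beta_n + d$ simultaneously, after checking the base cases $n=1$ (where $\alpha_1=d=\beta_1+d$) and $n=2$ (where $\sigma_2=0$ and (\ref{eq:a}) gives $\alpha_2=c_d+d+1=\beta_2+d$). For the inductive step, substituting $\sigma_n=(d+1)^{n-2}-1$, $\alpha_{n-1}=\beta_{n-1}+d$ and $2\beta_{n-1}=(d-1)\big((d+1)^{n-2}-1\big)$ into the $\sigma$-recurrence collapses the right-hand side to $\sigma_{n+1}=(d+1)^{n-1}-1$. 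Then feeding $\sigma_{n+1}$ and $\alpha_n=\beta_n+d$ into (\ref{eq:a}) yields $\alpha_{n+1}=(c_d+1)(d+1)^{n-1}+\beta_n+d$, and (\ref{eq:b}) identifies $(c_d+1)(d+1)^{n-1}+\beta_n$ with $\beta_{n+1}$, so $\alpha_{n+1}=\beta_{n+1}+d$ and the induction closes.
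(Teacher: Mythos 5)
Your proposal is correct and follows essentially the same route as the paper: solve (\ref{eq:b}) for $\beta_n$ by a geometric series, collapse the weighted sum (\ref{eq:sigma}) into the two-term recurrence $\sigma_{n+1}=2\sigma_n+\alpha_{n-1}+\beta_{n-1}$, and close a joint induction proving $\sigma_n=(d+1)^{n-2}-1$ and $\alpha_n=\beta_n+d$ simultaneously using the identity $\frac{c_d+1}{d}=\frac{d-1}{2}$. The only cosmetic difference is that the paper finishes the $\alpha$-step by observing that $\alpha_n-d$ satisfies the same recurrence and initial condition as $\beta_n$, whereas you substitute directly into (\ref{eq:a}) and (\ref{eq:b}); these are the same argument.
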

\begin{proof}
Using geometric series it follows that
$$\beta_n = (c_d + 1) \sum_{i=0}^{n-2} (d+1)^i =  \frac{c_d+1}{d}((d+1)^{n-1}-1).$$

In order to determine $\alpha$ and $\sigma$ we need to perform an intertwined induction on both. We claim that
$$\sigma_n = (d+1)^{n-2} - 1$$
for $n \geq 3$,
$$\sigma_1 = \sigma_2 = 0$$
and
$$\alpha_n = \beta_n + d$$
for $n\geq 1$ and $\beta_1 = 0$.

Obviously those claims are easily verified for $n\leq 2$. We will show the claim for $n > 2$ by induction. Assume the claims hold for $n-1$. By (\ref{eq:sigma}) for $n$ we have
$$\sigma_n = \alpha_{n-2} + \beta_{n-2} + 2 \sigma_{n-1} = (d+1)^{n-3}\Big(2\frac{c_d+1}{d} + 2\Big) + d - \Big(2\frac{c_d+1}{d}+2\Big).$$
Note now by definition of $c_d$ we have
$$2\frac{c_d+1}{d}+2 = 2\frac{c_d+d+1}{d} = d+1$$
whence
$$\sigma_n = (d+1)^{n-2} - 1.$$
Further by (\ref{eq:a}) we obtain
$$\alpha_n = c_d(d+1)^{n-2} + (d+1)^{n-2} + \alpha_{n-1}$$
which is solved under the initial condition $\alpha_1 = d$ just as (\ref{eq:b}) by
$$\alpha_n = \beta_n + d.$$
\end{proof}

The following proposition is the analogon of Theorem \ref{th:conespectrum} for the adjacency spectrum. Theorem \ref{th:conespectrum} will then follow immediately.

\begin{proposition}\label{prop:adjacencyspectrum}
Let $d > 1$ and $\mathcal{A}_i$ and $\mathcal{B}_i$ be the sequences recursively obtained as
$$\mathcal{A}_i := g^{-i}(0), \quad \quad \mathcal{B}_i := g^{-i}(-2)$$
for the polynomial
$$g(\zeta) = \zeta^2 - (d-1)\zeta - (d+1).$$
Then $\{\mathcal{A}_i, \mathcal{B}_i ~|~ i\in\mathbb{N}\}$ are mutually disjoint and the sequence of shifted spectral quantile functions
$$\Lambda(A(G_n)) = \Lambda(\Xi_n)$$
converges to the unique increasing step function $\Lambda$ on $[0,1]$ attaining values in
$$\bigcup_{i=0}^\infty \mathcal{A}_i\cup\bigcup_{j=0}^\infty \mathcal{B}_j$$
such that for $x\in\mathcal{A}_i\cup\mathcal{B}_{i}$ the value $(d+1)-x$ is attained on an interval of length
$$\frac{d-1}{2(d+1)^{i+1}}$$
in $L^1([0,1])$.
\end{proposition}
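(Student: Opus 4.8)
The plan is to extract the entire spectrum of $\Xi_n$ from the factorization of $D_n$ in Proposition \ref{prop:factorcone} specialized to $\lambda=1$, and then to pass to the limit of the empirical spectral measures. First I would observe that $\Psi(\mu,1)=g(\mu)$, so that $\Phi_\theta(\mu,1)=g(\mu)-\theta$ and the roots of $\Phi_\theta(\cdot,1)$ are exactly $g^{-1}(\theta)$. Hence $A_k(\cdot,1)$ vanishes precisely on $g^{-(k-1)}(0)=\mathcal{A}_{k-1}$ and $B_k(\cdot,1)$ precisely on $g^{-(k-2)}(-2)=\mathcal{B}_{k-2}$, while the residual factor $D_0(\cdot,1)$ contributes the single Perron value $\mu=d+1$, the fixed point of $g$. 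Reading off the exponents in $D_n=D_0 A_1^{\alpha_n}\cdots A_n^{\alpha_1}B_2^{\beta_n}\cdots B_n^{\beta_2}$, every $x\in\mathcal{A}_i$ occurs as an eigenvalue of $\Xi_n$ with multiplicity $\alpha_{n-i}$ and every $x\in\mathcal{B}_i$ with multiplicity $\beta_{n-i}$, once $n$ is large enough that the relevant factor is present.

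Second, I would pin down the combinatorics of these root sets from the one-step dynamics of $g$, which for $d>1$ is conjugate via $h(\zeta)=(d+1)-\zeta$ to $f_L(\zeta)=\zeta(d+3-\zeta)$ and sends $\mathcal{A}_i,\mathcal{B}_i$ to $\mathcal{P}_i,\mathcal{Q}_i$. The forward orbit of $0$ is $0\mapsto-(d+1)\mapsto(d+1)(2d-1)\mapsto\cdots$, which escapes to $+\infty$ because $g(x)-x=(x-(d+1))(x+1)>0$ for $x>d+1$; the forward orbit of $-2$ is $-2\mapsto d+1\mapsto d+1\mapsto\cdots$, absorbed at the fixed point. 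Since $d>1$ these two orbits are disjoint, aperiodic, and neither contains the other's base point, so a putative collision $x\in\mathcal{A}_i\cap\mathcal{A}_j$, $x\in\mathcal{B}_i\cap\mathcal{B}_j$, or $x\in\mathcal{A}_i\cap\mathcal{B}_j$ would respectively force $g^{|i-j|}(0)=0$, $g^{|i-j|}(-2)=-2$, or $0$ and $-2$ to share an orbit, all of which are impossible; this yields mutual disjointness. Working instead in the $f_L$-picture, the backward orbits stay in the forward-invariant interval $[0,d+3]$, on which $f_L$ has two distinct real branches (its maximal value $(d+3)^2/4$ exceeds $d+3$ for $d>1$), so every preimage step doubles the fibre and keeps the roots simple. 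Thus $|\mathcal{A}_i|=|\mathcal{B}_i|=2^i$ and each $x$ is a simple root of the one factor containing it, which is what makes the multiplicity bookkeeping exact.

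Third, I would convert multiplicities into interval lengths. Dividing by $\deg_\mu D_n=(d+1)^n$ and substituting $\beta_m=\tfrac{d-1}{2}((d+1)^{m-1}-1)$ and $\alpha_m=\beta_m+d$, both $\alpha_{n-i}/(d+1)^n$ and $\beta_{n-i}/(d+1)^n$ tend to $\frac{d-1}{2(d+1)^{i+1}}$ as $n\to\infty$, while the Perron atom carries the vanishing weight $(d+1)^{-n}$. A geometric summation confirms that no mass is lost, namely $\sum_{i\ge0}(|\mathcal{A}_i|+|\mathcal{B}_i|)\frac{d-1}{2(d+1)^{i+1}}=(d-1)\sum_{i\ge0}\frac{2^i}{(d+1)^{i+1}}=1$. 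Consequently the empirical spectral measures $\nu_n$ converge, atom by atom, to the probability measure $\nu$ placing mass $\frac{d-1}{2(d+1)^{i+1}}$ on each point of $\mathcal{A}_i\cup\mathcal{B}_i$, whose quantile function is exactly the increasing step function in the statement. Applying the conjugacy $h$ turns the attained value into $(d+1)-x$ running over $\mathcal{P}_i\cup\mathcal{Q}_i$, which is where the reduction to Theorem \ref{th:conespectrum} will come from.

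Finally, to obtain $L^1$-convergence rather than merely weak convergence, I would use that $\Xi_n$ is the adjacency matrix of a $(d+1)$-regular graph, so every $\nu_n$ and $\nu$ is supported in the fixed compact interval $[-(d+1),d+1]$; on a bounded support weak convergence upgrades to Wasserstein-$1$ convergence, which is precisely $L^1([0,1])$-convergence of the quantile functions, and replacing the quantile function by the shifted quantile function $\Lambda(\Xi_n)$ only perturbs it on a set of measure $(d+1)^{-n}\to0$. The weak limit itself I would justify by dominated convergence of the weight series, since $\alpha_{n-i}/(d+1)^n\le C(d+1)^{-i}$ uniformly in $n$ and $|\mathcal{A}_i|=2^i$, producing the summable bound $C(2/(d+1))^i$. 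I expect the main obstacle to be the dynamical exactness of the second step: everything downstream is routine bookkeeping, but the clean statement, each $x\in\mathcal{A}_i\cup\mathcal{B}_i$ attaining a single interval of the stated length, hinges on showing that the backward orbits of $0$ and $-2$ remain disjoint, real, and squarefree, i.e. never meet the critical point of $g$, which is exactly the place where the hypothesis $d>1$ is genuinely used.
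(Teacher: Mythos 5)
Your proposal is correct, and its first half is essentially the paper's own argument: specialize the factorization of $D_n$ at $\lambda=1$ so that $\Phi_\theta(\mu,1)=g(\mu)-\theta$, read off that $A_{i+1}$, $B_{i+2}$ vanish exactly on $\mathcal{A}_i$, $\mathcal{B}_i$ with exponents $\alpha_{n-i}$, $\beta_{n-i}$, establish disjointness from the forward orbits $0\mapsto-(d+1)\mapsto(d+1)(2d-1)\mapsto\cdots$ (escaping, by your identity $g(x)-x=(x-(d+1))(x+1)$, which is a cleaner form of the paper's estimate $g(c(d+1))>c(d+1)$) and $-2\mapsto d+1\mapsto d+1$, establish simplicity of roots from the non-vanishing discriminant (the paper does this with the explicit branches $\tfrac{d-1}{2}\pm r(x)$ in the $g$-picture; you do it with backward invariance of $[0,d+3]$ in the $f_L$-picture — note this interval is \emph{backward}-invariant, not forward-invariant as you wrote, since $f_L$ maps $[0,d+3]$ onto $[0,(d+3)^2/4]$; the slip is harmless because only preimages matter), and compute the limiting normalized multiplicities $\tfrac{d-1}{2(d+1)^{i+1}}$. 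Where you genuinely diverge from the paper is the final passage to $L^1$-convergence: the paper proves it by hand, truncating $\Lambda$ and $\Lambda_n$ to the finitely many values in $\bigcup_{i<n}\mathcal{A}_i\cup\bigcup_{i<n-1}\mathcal{B}_i$, counting jumps, and propagating step-length discrepancies to get an explicit bound of the form $(4d+6)\varepsilon$; you instead get total-variation convergence of the empirical spectral measures from atomwise convergence plus your summable dominating bound $C(2/(d+1))^i$, then use uniform compact support to pass to Wasserstein-$1$ convergence, and invoke the identity of $W_1$ with the $L^1$-distance of quantile functions. Your route is shorter and conceptually transparent, at the cost of importing standard optimal-transport facts; the paper's route is elementary and self-contained and yields explicit quantitative error bounds, which is in the spirit of its repeated use of Corollary \ref{cor:wielandt}. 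Both correctly handle the two loose ends — the vanishing Perron atom at $\mu=d+1$ coming from $D_0$, and the discrepancy between the quantile function and the shifted spectral quantile function, each of which contributes $O((d+1)^{-n})$ in $L^1$.
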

\begin{proof}
The claims follow immediately from the factorization provided for $D_n(\mu, \lambda)$ in Proposition \ref{prop:factorcone} - note that the eigenvalues of $\Xi_n$ are given by the roots of $D_n(\mu, 1)$. Under the assumption $\lambda = 1$ we obtain the following relations
$$D_0(\mu, 1) = \mu - (d+1),$$
$$A_1(\mu, 1) = \mu,$$
$$B_1(\mu, 1) = \mu + 2,$$
$$\Phi(\mu, 1) = g(\mu),$$
$$\Phi_\theta(\mu, 1) = g(\mu) - \theta.$$
Giving us the full description of the spectral distribution. The multiplicities are given by the exponents with which the factors appear.

Thus by $\mathcal{A}_i = g^{-i}(0)$ and $\mathcal{B}_i = g^{-i}(-2)$ the spectrum of $\Xi_n$ as a set decomposes as
$$\bigcup_{i=0}^{n-1}\mathcal{A}_i \cup \bigcup_{i=0}^{n-2}\mathcal{B}_i.$$
We will show that this is in fact a partition (i.e. $\{\mathcal{A}_i,\mathcal{B}_i ~|~ i\in\mathbb{N}\}$ are mutually disjoint). Furthermore by the above equations the eigenvalues in $\mathcal{A}_i$ and $\mathcal{B}_i$ are precisely the roots of the factors $A_{i+1}$ and $B_{i+2}$ which occur with exponent $\alpha_{n-i}$ and $\beta_{n-i}$ in $D_n$, respectively.

In what follows we will also see that the factors $A_i$ and $B_i$ do not have multiple roots so that the multiplicities of eigenvalues of $\Xi_n$ in $\mathcal{A}_i$ and $\mathcal{B}_i$ are $\alpha_{n-i}$ and $\beta_{n-i}$, respectively.

The mutual disjointness of $\mathcal{A}_i$ with any $\mathcal{B}_j$ can be seen in the following way: First note that $g(c\cdot (d+1)) > c\cdot (d+1)$ for every $c > 1$;
\begin{align*}
    g(c\cdot (d+1)) &= c^2 (d+1)^2 - c(d-1)(d+1) - (d+1)\\
    &= (d+1)(c^2 (d+1) - c(d-1) - 1)\\
    &> (d+1)(c (d+1) - c(d-1) - 1)\\
    &= (d+1)(2c - 1)\\
    &> c(d+1)
\end{align*}
Thus if for given $x$ the sequence $g^n(x)$ surpasses the value $(d+1)$ it must be strictly increasing.

Assume now that there is $x\in \mathcal{A}_i$ such that there is $j\in\mathcal{B}_j$ with $x\in \mathcal{B}_j$. In this case we have
$$g^i(x) = 0$$
and
$$g^j(x) = -2.$$
Thus in case $i < j$ we have found that
$$g^{j-i}(0) = -2$$
and in case $i > j$ we obtain
$$g^{i-j}(-2) = 0.$$

We will exclude both cases by observing the first few elements of the sequence before it is forced to be strictly increasing by the above observation.
Note that
$$g(0) = -(d+1)\neq -2$$
since $d > 1$. Furthermore
$$g^2(0) = f(-(d+1)) = (d+1)(d+1 + d-1 - 1) = (d+1)\underbrace{(2d-1)}_{> 1}$$
thus not attaining the value $-2$.

For the sequence with $x = -2$ we have
$$g(-2) = 4 + 2(d-1) - (d+1) = d+1$$
and
$$g(d+1) = (d+1)(d+1 - (d-1) - 1) = (d+1)$$
thus stabilizing at $d+1$ not attaining $0$.

This way we have seen that $\mathcal{A}_i$ and $\mathcal{B}_j$ must be disjoint. To see the mutual disjointness of the $\mathcal{A}$'s assume there is $x$ such that $x\in\mathcal{A}_i\cap\mathcal{A}_j$ and assume further that $i < j$. Then obviously we have
$$g^{j-i}(0) = 0$$
which we have shown to be false.

For the $\mathcal{B}$'s assume analogously that there is $x\in\mathcal{B}_i\cap\mathcal{B}_j$ such that $i < j$. Then again
$$g^{j-i}(-2) = -2.$$
But this is false since the sequence stabilizes at $d+1$ immediately.
Thus the mutual disjointness of $\{\mathcal{A}_i,\mathcal{B}_i~|~i\in\mathbb{N}\}$ follows.

That $A_i$ and $B_i$ do not have multiple roots can be obtained as follows: Obviously $A_1$ and $B_2$ do not have multiple roots. The roots of $A_{i+1}$ and $B_{i+1}$ are obtained from the roots of $A_i$ and $B_i$, respectively, by taking $g^{-1}$, i.e. if $\lambda$ is a root of $A_i$ it induces two roots of $A_{i+1}$, namely
$$\Bigg\{\frac{d-1}{2} \pm \sqrt{\Big(\frac{d-1}{2}\Big)^2 + (d+1) + \lambda}\Bigg\}.$$
In particular $\mathcal{A}_{i+1}$ does split in two sets
$$\mathcal{A}_{i+1} = \mathcal{A}_{i+1}^+\cup\mathcal{A}_{i+1}^-$$
each in bijection to $\mathcal{A}_i$ over the maps $\frac{d-1}{2} + r$, $\frac{d-1}{2}-r$ for
$$r(x) = \sqrt{\Big(\frac{d-1}{2}\Big)^2 + (d+1) + x}.$$
Note that $r$ is non-negative and $r(x) = 0$ iff $x =  -(1 + ((d+1)/2)^2) < -2$ since $d > 1$. Thus the sets $\mathcal{A}_i^+$ and $\mathcal{A}_i^-$ have to be disjoint and thus no roots of $A_i$ can be multiple.
Analogously we obtain the same for $\mathcal{B}_i$.

Now we will show the convergence of the spectral cdf of $G_n$ towards the function $\Lambda$ claimed to be the limit.

The convergence of $\Lambda_n = \Lambda(A(G_n)) = \Lambda(\Xi_n)$ towards the increasing step function $\Lambda$ with step values in $\bigcup_{i=0}^\infty \mathcal{A}_i\cup\bigcup_{i=1}^\infty \mathcal{B}_i$ and step length 
$$\frac{d-1}{2(d+1)^{i+1}}$$
for values in $\mathcal{A}_i\cup\mathcal{B}_i$ follows from the fact that the step length of eigenvalues $\lambda\in\mathcal{A}_i$ or $\lambda\in\mathcal{B}_i$ in $\Lambda_n$ is given by
$$\frac{\alpha_{n-i}}{(d+1)^n} = \frac{d-1}{2} \frac{(d+1)^{n-i-1}}{(d+1)^n} + \frac{d+1}{2(d+1)^n}\xrightarrow{n\rightarrow\infty} \frac{d-1}{2(d+1)^{i+1}}$$
or
$$\frac{\beta_{n-i}}{(d+1)^n} = \frac{d-1}{2}\frac{(d+1)^{n-i-1}}{(d+1)^n} - \frac{d-1}{2(d+1)^n} \xrightarrow{n\rightarrow\infty} \frac{d-1}{2(d+1)^{i+1}},$$
respectively.

Thus the step lengths of the steps in $\Lambda_n$ converge uniformly towards their respective step length in $\Lambda$ and the values not attained by $\Lambda_n$ are
$$\bigcup_{i=n}^\infty \mathcal{A}_i \cup \bigcup_{i=n-1}^\infty \mathcal{B}_i.$$
They are attained by $\Lambda$ on a joint volume of 
\begin{eqnarray*}
\sum_{i=n}^\infty 2^i \frac{d-1}{2(d+1)^{i+1}} + \sum_{i=n-1}^\infty 2^i \frac{d-1}{2(d+1)^{i+1}} &=& 2^{n-1}\frac{d-1}{2(d+1)^n} + \sum_{i=n}^\infty 2^i \frac{d-1}{(d+1)^{i+1}}\\
&\overset{d\neq 1}{=}& 2^{n-2}\frac{d-1}{(d+1)^n} + \frac{2^n}{(d+1)^{n}}\\
&=& \frac{2^{n-2}(d+3)}{(d+1)^n}.
\end{eqnarray*}
Obviously since $d + 1 > 2$ this volume vanishes asymptotically.

Thus for $\varepsilon > 0$ there exists $n_0\in\mathbb{N}$ such that for every $n\geq n_0$ it holds that
$$\frac{2^{n-2}(d+3)}{(d+1)^n} < \varepsilon$$
so that also
$$\delta := \frac{d+1}{2(d+1)^n} < \frac{d+1}{2^{n-1}(d+3)} \varepsilon.$$

Let such an $n$ be fixed for now; then by the above considerations we might modify $\Lambda$ by setting all steps with values in
$$\bigcup_{i=n}^\infty\mathcal{A}_i \cup \bigcup_{i=n-1}^\infty \mathcal{B}_i$$
to zero, i.e.
$$\tilde{\Lambda}(x) := \Lambda(x)\cdot \boldsymbol{1}_{\Lambda(x)\in F}$$
for
$$F = \bigcup_{i=0}^{n-1}\mathcal{A}_i \cup \bigcup_{i=0}^{n-2}\mathcal{B}_i.$$
Obviously since the absolute values of elements in $\bigcup_{i=0}^\infty\mathcal{A}_i\cup\bigcup_{i=0}^\infty\mathcal{B}_i$ are bounded by $d+1$ from the above we obtain
$$||\Lambda - \tilde{\Lambda}||_{L^1([0,1])} < \varepsilon \cdot (d+1).$$

Note that $\Lambda - \tilde{\Lambda}$ is supported on a set of measure less than $\varepsilon$.

Subsequently we do the same modification for $\Lambda_n$, i.e. $\tilde{\Lambda}_n(x) := \Lambda_n(x) \cdot \boldsymbol{1}_{\Lambda(x) \in F}$ so that both $\tilde{\Lambda}$ and $\tilde{\Lambda}_n$ are zero on $\Lambda^{-1}(F^c)$. 
We obviously have by the same reasoning as for $\Lambda$ that
$$||\tilde{\Lambda}_n - \Lambda_n||_{L^1([0,1])} < \varepsilon(d+1).$$

Thus on $\Lambda^{-1}(F^c)$ $\tilde{\Lambda}$ aswell as $\tilde{\Lambda}_n$ are zero so that we might consider both functions to be step function on $[0,1-\varepsilon)$ (this can be done because we modified $\Lambda$ only on a countable union of intervals from $[0,1]$). We will denote those step functions by $\tilde{\Lambda}$ and $\tilde{\Lambda}_n$ aswell as their $L^1$-distance stay the same under this transition.

Now order the eigenvalues in $F$ as
$$\lambda_1 < ... < \lambda_k$$
and note that
$$k = \sum_{i=0}^{n-1} 2^i + \sum_{i=0}^{n-2}2^i = 2^n + 2^{n-1} - 2.$$
Furthermore denote by $\ell_i$ and $\ell_i'$ the length of the step with value $\lambda_i$ in $\tilde{\Lambda}$ and $\tilde{\Lambda}_n$, respectively. Note that $\ell_i'$ might be $0$ if the entire step of $\lambda_i$ in $\Lambda_n$ was contained in $\Lambda^{-1}(F^c)$.

As both $\tilde{\Lambda}$ and $\tilde{\Lambda}_n$ have the same finite image set $F$ we can bound the $L^1$ distance in terms of their jumps. Note that the discrepancy $\delta_i := |\ell_i - \ell_i'|$ induces a shift in the subsequent steps by $\delta_i$ where the shift is accounted for in $L^1$-distance by the integration of every subsequent jump over an interval of length $\delta_i$.
We decompose $\delta_i$ in two parts;
$$\delta_i \leq \delta + f_i,$$
where $\delta$ is the discrepancy introduced by the differing length of steps coming from $\Lambda$ and $\Lambda_n$ themselves while $f_i$ is introduced by deleting parts of the steps in the process of going over to $\tilde{\Lambda}_n$ from $\Lambda_n$. Since the range on which we delete is of measure $\varepsilon$ we obtain
$$\sum_{i=1}^k f_i = \varepsilon.$$
Thus letting the jump height of the $i$-th jump be $h_i = \lambda_{i+1} - \lambda_i$ we obtain
\begin{eqnarray*}
||\tilde{\Lambda} - \tilde{\Lambda}_n||_{L^1([0,1-\varepsilon))} &\leq& \sum_{i=1}^k \delta_i \sum_{j=i}^{k-1} h_j\\
&\leq& \sum_{i=1}^k \delta \sum_{j=i}^{k-1}h_j + \sum_{i=1}^k f_i \sum_{j=i}^{k-1}h_j.
\end{eqnarray*}
Obviously $\sum_{j=i}^{k-1} h_j \leq d+3$ since $F\subset [-2,d+1]$. Thus using $\sum_{i=1}^k f_i = \varepsilon$
$$||\tilde{\Lambda} - \tilde{\Lambda}_n||_{L^1([0,1-\varepsilon))} \leq (d+3)(k\delta + \varepsilon).$$
This can be bounded using the equations for $\delta$ and $k$ by
$$k\delta < (2^n + 2^{n-1}-2)\frac{d+1}{2^{n-1}(d+3)}\varepsilon < 3 \frac{d+1}{d+3}\varepsilon.$$
Thus
$$||\tilde{\Lambda} - \tilde{\Lambda}_n||_{L^1([0,1-\varepsilon))} < (3(d+1) + d+3)\varepsilon = (4d+6)\varepsilon;$$
showing the claim.
\end{proof}

\begin{proof}[Proof of Theorem \ref{th:conespectrum}]
Theorem \ref{th:conespectrum} immediately follows from Proposition \ref{prop:adjacencyspectrum} by the observation
$$\Delta_n = \Delta(\text{cd}^n\Delta_d) = (d+1)\cdot I - A(G_n) = (d+1)\cdot I - \Xi_n.$$
Thus the spectral cdfs satisfy the following condition:
$$\Lambda(\Delta_n)(x) = (d+1) - \Lambda(\Xi_n)(1-x).$$

So that by definition both limits are equal if for the shifting function $\sigma(x) := (d+1) - x$ we have
$$\mathcal{P}_i = \sigma(\mathcal{A}_i)$$
$$\mathcal{Q}_i = \sigma(\mathcal{B}_i)$$
for every $i$.

For $i = 0$ this is obviously true since $\sigma(0) = d+1$ and $\sigma(-2) = d+3$ and for $i > 0$ we can see this by the following inductive consideration:

Let $\lambda\in \mathcal{A}_i$ then $g^{-1}(\lambda)\subseteq\mathcal{A}_{i+1}$. Let $\lambda'\in\mathcal{A}_{i+1}$ be given so that
$$g(\lambda') = \lambda.$$
For $\mu = \sigma(\lambda)\in \mathcal{P}_i$ we obtain $\mu' = \sigma(\lambda')$ from $\mu$ over $f$ as follows:

\begin{eqnarray*}
d+1 - \mu = \lambda &=& g(\lambda')\\
&=& (d+1 - \mu')^2 - (d-1)(d+1-\mu') - (d+1)\\
&=& d+1 - (d+3)\mu' + (\mu')^2\\
&=& d+1 - f(\mu').
\end{eqnarray*}
Thus $\mu = f(\mu')$. The same calculation gives us the equality of $\mathcal{P}_i$ and $\sigma(\mathcal{A}_i)$. Thus the induction holds.

Analogously we can see $\mathcal{Q}_i = \sigma(\mathcal{B}_i)$ and so the limit of $\Lambda(\Delta_n)$ is the claimed step function.
\end{proof}

\newpage
\section{Relations to Fractal Theory}\label{sec:fractal}
Given a inclusion-uniform subdivision $\text{div}$ we associate to it the sequence $(\Gamma_n)_{n\in\mathbb{N}} = (\Gamma^{(d)}(\text{div}^n\Delta_d))_{n\in\mathbb{N}}$ of simple graphs.
This sequence can be considered self-similar when relaxing known constructions of graph-directed self-similar sets.
This relaxation has to be made to common definitions since fractals associated to inclusion-uniform subdivisions are not finitely-ramified in general and thus the orientation matters when joining graphs. In the following construction we will compensate for the ambiguity introduced by dependence on orientation. Note however that all considered examples can be oriented in a more convenient way - thus also giving rise to a Schreier graph approximation of the sequence.

The construction which dualizes iterated subdivision by a inclusion-uniform operation as a graph-sequence approximation of a self-similar set approximation is the following:\\

\textbf{The data:}
Let $d,n,N\in\mathbb{N}$.
We start with an initial graph $\Gamma_0$ on vertex set $\{1,...,N\}$ with degrees bounded by $d+1$ and a dedicated $S_{d+1}$-action on it. We will formally let $S_{d+1}$ act on $\{0,...,d\}$ for the purpose of this construction and denote action of $\sigma\in S_{d+1}$ on $i\in V(\Gamma_0) = \{1,...,N\}$ by $\sigma i$ opposed to the evaluation $\sigma(i)$ at $i\in \{0,...,d\}$. $\sigma_\ast: E(\Gamma_0)\rightarrow E(\Gamma_0)$ denotes the push forward action on the edge set, i.e.
$$\sigma_\ast \{v,w\} := \{\sigma v, \sigma w\}\in E(\Gamma_0)$$
for $\{v,w\}\in E(\Gamma_0)$.

Further let $\partial_i\Gamma_0$, $i=0,...,d$, denote $(d+1)$-many dedicated $n$-element boundary sets of $\Gamma_0$ such that $\partial\Gamma_0 := \bigcup_{i=0}^d\partial_i\Gamma_0$ is the set of vertices with degree $< d+1$ in $\Gamma_0$. Furthermore for every vertex $v\in \partial\Gamma_0$ we require that
$$\text{deg}v + \underbrace{\# \{i\in \{0,...,d\} ~|~ v\in\partial_i\Gamma_0\}}_{=:b_v} = d+1.$$
We further want the above sets $\partial_i\Gamma_0$ to be compatible with the group action in the sense that
\begin{equation}\tag{$\ast$}
\partial_i\Gamma_0 = \tau_{ij}\partial_j\Gamma_0
\end{equation}
for the transposition $\tau_{ij} = (i~j)$ and the set $\partial_i\Gamma_0$ has to be invariant under $S_d^i = \{\sigma\in S_{d+1} ~|~ \sigma(i) = i\}$, $i=0,...,d$.

In order to make $\Gamma_0$ $(d+1)$-regular we add $b_v$ many loops to the vertices $v\in \partial\Gamma_0$ and denote the loop added to $v\in\partial_i\Gamma_0$ for the $i$-th boundary by $\ell_v(i)$. The action of $S_{d+1}$ extends to this graph in a natural way by
$$\sigma_\ast \ell_v(i) := \ell_{\sigma v}(\sigma(i)).$$
We will thus denote the graph obtained by this addition as $\Gamma_0$ from now on.

For every loop $e = \ell_v(i)\in E(\Gamma_0)$ let $\kappa_v(e) := i$ and for every edge $e = \{v,w\}\in E(\Gamma_0)$ choose $\kappa_v(e), \kappa_w(e) \in \{0,...,d\}$ such that
$$\{\kappa_v(e) ~|~ v\in e \in E(\Gamma_0)\} = \{0,...,d\}.$$

Further let $\rho_{ij}: \{0,...,d\} \rightarrow \{0,...,d\}$ be a bijection such that
$$\rho_{ij}(\kappa_i(e)) = \kappa_j(e)$$
and $\rho_{ji} := \rho_{ij}^{-1}$ aswell as the compatibility with the $S_{d+1}$-action as $\rho_{\sigma i ~ \sigma j} = \nu_{\sigma, j} \rho_{ij} \nu_{\sigma, i}^{-1}$, where $\nu_\sigma\in S_{d+1}$ is given as
$$\nu_{\sigma, i}(\kappa_i(e)) := \kappa_{\sigma i}(\sigma_\ast e)$$

\textbf{The construction:} Having this information fixed we construct a self-similar sequence $(\Gamma_k)_{k\in\mathbb{N}}$.
Assume $\Gamma_{k-1}$ and $\partial_i\Gamma_{k-1}$ with a $S_{d+1}$-action sufficing the same conditions as the action on $\Gamma_0$ are constructed. Then we construct $\Gamma_k$ as a graph on vertex set $V(\Gamma_k) = V(\Gamma_0)\times V(\Gamma_{k-1}) = [N]^{k+1}$ and denote a vertex by a pair $(i,v)$ for $i\in V(\Gamma_0)$ and $v\in V(\Gamma_{k-1})$. 
We include the edge $\{(i,v), (j, w)\} \in E(\Gamma_k)$ if one of the following two conditions is met:
\begin{itemize}
    \item Either $i = j$ and $\{v,w\}\in E(\Gamma_{k-1})$ or
    \item $i\neq j$, $e = \{i,j\}\in E(\Gamma_0)$, $v\in\partial_{\kappa_i(e)}\Gamma_{k-1}$, $w\in\partial_{\kappa_j(e)}\Gamma_{k-1}$ and
    $$\rho_{ij} v = w.$$
\end{itemize}
Furthermore we include the loops at vertices $(i,v)$ for every loop at $v$ in $\Gamma_{k-1}$.

We will now show that $\Gamma_k$ again admits an $S_{d+1}$-action and construct sets $\partial_i\Gamma_k$. The sets $\partial_i\Gamma_k$ are given by
$$\partial_i\Gamma_k := \partial_i\Gamma_{k-1}\times \partial_i\Gamma_0 = (\partial_i\Gamma_0)^{k+1}.$$
The graph $\Gamma_k$ can be assigned an $S_{d+1}$-action as follows:
$$\sigma\cdot (i,v) := (\sigma i, \nu_{\sigma,i} v),$$
where $\nu_{\sigma, i}$ as above is the permutation in $S_{d+1}$ given by $\nu_{\sigma,i}: \kappa_i(e) \mapsto \kappa_{\sigma i}(\sigma_\ast e)$ for all $e\in E(\Gamma_0)$ with $i\in e$.
We will now show that this action is compatible with the multiplication of $S_{d+1}$. To this end note that $\nu_{\text{id},i}(\kappa_i(e)) = \kappa_i(e)$ and thus $\nu_{\text{id}, i} = \text{id}$. Let $\sigma,\tau\in S_{d+1}$ then
$$\nu_{\sigma\tau, i}(\kappa_i(e)) = \kappa_{(\sigma\tau) i}((\sigma\tau)_\ast e) = \kappa_{\sigma(\tau i)}(\sigma_\ast (\tau_\ast e)) = \nu_{\sigma, \tau i}(\kappa_{\tau i}(\tau_\ast e)) = \nu_{\sigma,\tau i}(\nu_{\tau, i}(\kappa_i(e)))$$
such that $\nu_{\sigma\tau, i} = \nu_{\sigma, \tau i}\nu_{\tau, i}$.

In particular we have
$$(\sigma\tau)(i, v) = ((\sigma\tau) i, \nu_{\sigma\tau, i} v) = (\sigma (\tau i), \nu_{\sigma, \tau i} (\nu_{\tau, i} v)) = \sigma (\tau i, \nu_{\tau, i} v) = \sigma (\tau (i, v)).$$
Thus we obtain a well-defined action of $S_{d+1}$ on $V(\Gamma_k)$. We will show that it preserves edge relations, which is immediate for edges of the first type.

Assume we have an edge of the second type, i.e. $e = \{(i,v), (j, w)\}$ and $i \neq j$, $e' = \{i, j\}\in E(\Gamma_0)$, $v\in\partial_{\kappa_i(e')}\Gamma_{k-1}$, $w\in\partial_{\kappa_j(e')}\Gamma_{k-1}$ and
$$\rho_{ij} v = w.$$
Let $\sigma\in S_{d+1}$ be given. Obviously $\sigma i\neq \sigma j$ and $\sigma_\ast e' \in E(\Gamma_0)$. Since for $\ell,\ell'\in \{0,...,d\}$ $\partial_\ell\Gamma_{k-1}$ are invariant under $S_d^\ell$ and $\partial_\ell\Gamma_{k-1} = \tau_{\ell\ell'}\partial_\ell'\Gamma_{k-1}$ we have
$$\sigma \partial_\ell\Gamma_{k-1} = ((\ell ~ \sigma(\ell))\circ \tau) \partial_\ell\Gamma_{k-1} = \partial_{\sigma(\ell)} \Gamma_{k-1}$$
for $\tau\in S_d^\ell$ given as $\tau(p) = \sigma(p)$ if $p\notin\{\ell,\sigma^{-1}(\ell)\}$ and $\tau(\ell) = \ell$, $\tau(\sigma^{-1}(\ell)) = \sigma(\ell)$.
Denote now $\ell = \kappa_i(e')$, $\ell' = \kappa_j(e')$ so that $\nu_{\sigma, i} v \in \partial_{\nu_{\sigma, i}(\ell)} \Gamma_{k-1}$ and $\nu_{\sigma, j} w\in \partial_{\nu_{\sigma, j}(\ell')}\Gamma_{k-1}$. 
Thus the last condition to show is that
$$\rho_{\sigma i~ \sigma j}(\nu_{\sigma, i}(v)) = \nu_{\sigma, j}(w).$$
This is by definition equivalent to
$$(\nu_{\sigma, j}^{-1} \rho_{\sigma i~\sigma j} \nu_{\sigma, i})v = w = \rho_{ij} v.$$
But by the condition on $\rho_{ij}$ we have
$$\rho_{\sigma i~\sigma j} = \nu_{\sigma, j}\rho_{ij}\nu_{\sigma, i}^{-1}$$
showing the claim.

So that all conditions for $\Gamma_k$ are met in order to iteratively define the next graph in the sequence.

\textbf{Duality to iterated subdivision:}
Let $\text{div}$ be a inclusion-uniform subdivision acting on $d$-dimensional complexes. 
We apply the above construction with the given $d$, $n = f_{d-1}(\text{div})$ and $N = f_d(\text{div})$. We can equip the $d$-dual graph of the subdivision of $\Delta_d$,
$$\Gamma_0 := \Gamma^{(d)}(\text{div}\Delta_d),$$
with an $S_{d+1}$-action by the condition of being inclusion-uniform; let $\sigma\in S_{d+1}$ and $K = \Delta_d$ in Definition \ref{def:barycentric}; then $\sigma$ canonically defines a bijective vertex-identification
$$\sigma: F_0(\Delta_d) \rightarrow F_0(\Delta_d)$$
and thus extends to a (geometric) simplicial isomorphism of
$$\tilde{\sigma}: \text{div}\Delta_d \rightarrow \text{div}\Delta_d$$
sufficing $\tilde{\sigma}(\{i\}) = \{\sigma(i)\}$. Note that $\tilde{\sigma}$ defines an action on $\Gamma_0$ by
$$\sigma \tau := \tilde{\sigma}(\tau)$$
for $\tau\in V(\Gamma_0) = F_d(\text{div}\Delta_d)$.
Obviously this action preserves the edge-relation since $\tilde{\sigma}$ is a simplicial isomorphism.

We enumerate the facets $F_d(\text{div}\Delta_d) = \{\tau_1,...,\tau_N\}$ and assume $\tau_i$ corresponds to the vertex $i\in V(\Gamma_0)$.

In order to define the boundary sets let $\sigma_i = (0,...,\hat{i},...,d)\in F_{d-1}(\Delta_d)$ and let
$$\Sigma_i := F_{d-1}(\text{div}_{\Delta_d}\sigma_i),$$
where $s: \text{div}\Delta_d\rightarrow \Delta_d$ denotes the subdivision map. Then let
$$\partial_i\Gamma_0 := \{\tau\in F_d(\text{div}\Delta_d) ~|~ F_{d-1}(\tau)\cap \Sigma_i \neq \emptyset\}.$$

Obviously since the isomorphism is geometric it has to restrict to an isomorphism on the boundaries; thus $\tilde{\tau}_{ij}$ is mapping $\Sigma_i$ to $\Sigma_j$ and so it maps the respective unique facets of the $(d-1)$-faces in $\Sigma_i$ to their counterparts in $\Sigma_j$, i.e. 
$$\partial_i\Gamma_0 = \tau_{ij}\partial_j\Gamma_0.$$

Furthermore assuming $\sigma\in S_d^i$ we have that it leaves $\sigma_i$ invariant as a set of vertices so that $\sigma\Sigma_i = \Sigma_i$ and analogously
$$\sigma \partial_i\Gamma_0 = \partial_i\Gamma_0.$$

The labels $\kappa_i(e)$, $i\in [N]$, can be chosen at will respecting the condition that $\kappa_i(\ell_i(j)) = j$ if $i\in\partial_j\Gamma_0$.

We now identify an edge $\{i,j\}\in E(\Gamma_0)$ with the face $\tau_i\cap\tau_j\in F_{d-1}(\text{div}\Delta_d)$ for their respective facets $\tau_i,\tau_j\in F_d(\text{div}\Delta_d)$.
The bijections $\rho_{ij}$, $\{i,j\}\in E(\Gamma_0)$, are then given by the maps
$$\rho_{ij}(\kappa_i(\nu)) := \kappa_j(\nu')$$
for $\nu\in F_{d-1}(\tau_i)$ fixed and $\nu'$ the unique face in $F_{d-1}(\tau_j)$ such that $\nu\cap\nu'\in F_{d-2}(\tau_i\cap\tau_j)$.

The equation for $\rho_{ij}$ making it compatible with the $S_{d+1}$-action can be seen as follows; assume $\{i,j\}\in E(\Gamma_0)$ and let $\nu\in F_{d-1}(\tau_i)$ be given and $\nu'\in F_{d-1}(\tau_j)$ be the unique face such that $\nu\cap\nu'\in F_{d-2}(\tau_i\cap \tau_j)$. By definition of the action $\sigma\in S_{d+1}$ acts on $\Gamma_0$ as the isomorphism $\tilde{\sigma}$ acts on the facets. Since $\tilde{\sigma}$ is a simplicial isomorphism we have that
$$\tilde{\sigma}(\nu)\cap\tilde{\sigma}(\nu') \in F_{d-2}(\tilde{\sigma}(\tau_i)\cap\tilde{\sigma}(\tau_j)).$$
Furthermore we know that if the edges $e,e'\in E(\Gamma_0)$ correspond to $\nu, \nu'$, respectively, then the edges corresponding to $\tilde{\sigma}(\nu)$ and $\tilde{\sigma}(\nu')$ are given by $\sigma_\ast e$ and $\sigma_\ast e'$ by definition, respectively.

Thus
$$\rho_{\sigma i~\sigma j}(\nu_{\sigma, i}(\kappa_i(e))) = \rho_{\sigma i ~\sigma j}(\kappa_{\sigma i}(\sigma_\ast e)) = \kappa_{\sigma j}(\sigma_\ast e') = \nu_{\sigma, j}(\kappa_j(e')) = \nu_{\sigma, j}(\rho_{ij}(\kappa_i(e))),$$
showing the compatibility with the $S_{d+1}$-action.

\begin{theorem}
In the above setting it holds that $\tilde{\Gamma}_k \cong \Gamma^{(d)}(\text{\normalfont div}^{k+1}\Delta_d)$ for all $k \geq 0$, where $\tilde{\Gamma}_k$ results from $\Gamma_k$ by removing loops.

In case $\text{\normalfont div}$ acts non-trivial on $d$-faces the graph $\Gamma_k$ contains $(d+1) f_{d-1}(\text{\normalfont div})^{k+1}$ loops. In particular $\Gamma_k$ approximates $\Gamma^{(d)}(\text{\normalfont div}^{k+1}\Delta_d)$.
\end{theorem}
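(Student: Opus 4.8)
The plan is to induct on $k$, matching the recursive construction of $\Gamma_k$ against the recursive description of $\text{div}^{k+1}\Delta_d$ as a gluing of $N:=f_d(\text{div})$ copies of $\text{div}^k\Delta_d$. I strengthen the induction hypothesis to assert not only $\tilde\Gamma_{k-1}\cong\Gamma^{(d)}(\text{div}^k\Delta_d)$ but also that under this isomorphism $\partial_\ell\Gamma_{k-1}$ is the set of top-faces of $\text{div}^k\Delta_d$ having a facet on the boundary face $\sigma_\ell$, and that the $S_{d+1}$-action on $\Gamma_{k-1}$ is dual to the inclusion-uniform simplicial automorphisms of $\text{div}^k\Delta_d$ permuting the $\sigma_\ell$. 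The base case $k=0$ is immediate from the construction: $\tilde\Gamma_0=\Gamma^{(d)}(\text{div}\Delta_d)$ and $\partial_\ell\Gamma_0$ is defined as the top-faces meeting $\sigma_\ell$. For the step, I first record the geometric decomposition: writing $\text{div}^{k+1}\Delta_d=\text{div}^k(\text{div}\Delta_d)$ and $\text{div}\Delta_d=\mathcal{G}_\ast(\tau_1,\dots,\tau_N)$ with $r_d(\mathcal{G})=0$, iterating Proposition \ref{prop:gluing} yields $\text{div}^{k+1}\Delta_d=\mathcal{G}^{(k)}_\ast(\text{div}^k\tau_1,\dots,\text{div}^k\tau_N)$, where each $\text{div}^k\tau_i\cong\text{div}^k\Delta_d$ and $\mathcal{G}^{(k)}$ glues along the $\text{div}^k$-subdivisions of the shared faces $\tau_i\cap\tau_j$. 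Since $r_d$ stays $0$, no top-faces are identified, so $F_d(\text{div}^{k+1}\Delta_d)=\bigsqcup_i F_d(\text{div}^k\tau_i)$, and I define the bijection $\Phi_k\colon[N]\times V(\Gamma_{k-1})\to F_d(\text{div}^{k+1}\Delta_d)$ sending $(i,v)$ to the top-face of copy $i$ corresponding to $v$ under the inductive isomorphism.

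Next I would check that $\Phi_k$ carries the two edge types of $\Gamma_k$ onto $d$-down adjacency, i.e.\ onto the relation of sharing a $(d-1)$-face. If $\Phi_k(i,v),\Phi_k(j,w)$ lie in the same copy ($i=j$), any shared facet is interior to that copy, so adjacency is equivalent to $\{v,w\}\in E(\Gamma_{k-1})$ by the induction hypothesis — the first edge type. If they lie in copies $i\neq j$, the shared facet must sit on the glued boundary, which forces $e=\{i,j\}\in E(\Gamma_0)$, $v\in\partial_{\kappa_i(e)}\Gamma_{k-1}$, $w\in\partial_{\kappa_j(e)}\Gamma_{k-1}$, and the two outward facets to be identified by the gluing — the second edge type, with identification $\rho_{ij}$. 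I would also verify that $\Phi_k$ respects boundaries: $(i,v)\in\partial_\ell\Gamma_k$ iff $i\in\partial_\ell\Gamma_0$ and $v\in\partial_\ell\Gamma_{k-1}$, i.e.\ iff copy $i$ meets $\sigma_\ell$ and $v$ meets the $\ell$-boundary inside it, i.e.\ iff $\Phi_k(i,v)$ has a facet on $\sigma_\ell$; this reinstates the strengthened hypothesis at level $k$ and, together with the $S_{d+1}$-compatibility already checked in the text, closes the induction.

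The main obstacle is the inter-copy case: I must show that the abstract label bijection $\rho_{ij}$, applied at level $k$ through the $S_{d+1}$-action on $\Gamma_{k-1}$, reproduces the concrete gluing identification $\varphi'$ of the subdivided shared face furnished by Proposition \ref{prop:gluing}. The decisive point is that, by inclusion-uniformity, the subdivision of $\tau_i\cap\tau_j$ together with its gluing identification is determined by the vertex-level identification alone, namely the unique lift $\tilde\pi$ from the consequence of Definition \ref{def:barycentric}. Hence $\varphi'$, restricted to the top-faces meeting $\tau_i\cap\tau_j$, is exactly the lift of $\rho_{ij}$; matching each such top-face to its unique outward $(d-1)$-facet and tracking that facet through $\varphi'$ reduces the identification to the relation $\rho_{ij}(\kappa_i(\cdot))=\kappa_j(\cdot)$ that defines $\rho_{ij}$. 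The compatibility identity $\rho_{\sigma i\,\sigma j}=\nu_{\sigma,j}\rho_{ij}\nu_{\sigma,i}^{-1}$ guarantees this matching is independent of the identification chosen for each copy, so the edge correspondence holds uniformly across all $i,j$.

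For the loop count and the final assertion I argue on the dual side. In $\tilde\Gamma_k$ the non-loop degree of $(i,v)$ equals the number of facets of $\Phi_k(i,v)$ interior to $\text{div}^{k+1}\Delta_d$, so the loops appended at $(i,v)$ to restore $(d+1)$-regularity count precisely the facets of $\Phi_k(i,v)$ lying on $\partial\Delta_d$. Summing over all top-faces, $\ell(\Gamma_k)$ equals the number of top $(d-1)$-faces of $\text{div}^{k+1}\Delta_d$ on $\partial\Delta_d$; since the restriction to each of the $d+1$ facets $\sigma_\ell$ is isomorphic to $\text{div}^{k+1}\Delta_{d-1}$, which has $f_{d-1}(\text{div})^{k+1}$ top faces, this gives $\ell(\Gamma_k)=(d+1)f_{d-1}(\text{div})^{k+1}$ (equivalently $\sum_\ell|\partial_\ell\Gamma_0|\,|\partial_\ell\Gamma_{k-1}|$). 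Finally $v(\Gamma_k)=f_d(\text{div})^{k+1}$, so $\ell(\Gamma_k)/v(\Gamma_k)=(d+1)\,q_d(\text{div})^{k+1}\to0$ by Lemma \ref{lemm:ineqfvecs} (using non-triviality so $q_d(\text{div})<1$), whence $\ell(\Gamma_k)\ll v(\Gamma_k)$ and $\Gamma_k$ approximates $\Gamma^{(d)}(\text{div}^{k+1}\Delta_d)$.
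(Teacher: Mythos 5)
Your proposal follows essentially the same route as the paper's proof: induction on $k$, decomposing $\text{div}^{k+1}\Delta_d = \text{div}^k(\text{div}\,\Delta_d)$ into $N = f_d(\text{div})$ glued copies of $\text{div}^k\Delta_d$ via Proposition \ref{prop:gluing}, and using inclusion-uniformity to identify the label bijection $\rho_{ij}$ (acting through the $S_{d+1}$-action) with the gluing bijection $\varphi'$ of equation (\ref{eq:gluing_bij}). Your explicit treatment of the loop count and of the approximation property via Lemma \ref{lemm:ineqfvecs} actually goes slightly beyond the paper's written proof, which establishes only the isomorphism $\tilde{\Gamma}_k \cong \Gamma^{(d)}(\text{div}^{k+1}\Delta_d)$ and leaves the count $(d+1)f_{d-1}(\text{div})^{k+1}$ implicit in the construction.
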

\begin{proof}
The claim is trivially true for $k = 0$. Thus let $k > 0$ and assume the claim is satisfied for $k-1$.

Let $i\in [N]$ be fixed for now. Note that the choice of $\kappa_i(e)$ for $e\in E(\Gamma_0)$, $i\in e$, corresponds to an ordering of the vertices of $\tau_i$ as follows:

For $e\in E(\Gamma_0)$ with $i\in e$ let $\nu_e\in F_{d-1}(\tau_i)$ denote the face generating $e$ in $\Gamma_0$, i.e. if $e = \{i,j\}$ for $j\in [N]$ then $\nu_e = \tau_i\cap \tau_j$ and if $e = \ell_i(j)$ for some $j\in \{0,...,d\}$ we let $\nu_e$ denote the unique face in $\Sigma_j\cap F_{d-1}(\tau_i)$.
Now we order the vertices of $\tau_i$ in a way compatible with how we ordered the boundary sets - i.e. we let $v\in F_0(\tau_i)$ be at position
$$\kappa_i(e)$$
where $e\in E(\Gamma_0)$, $i\in e$, is the unique edge such that $\nu_e = \tau_i\setminus \{v\}$. This vertex will be denoted $v_{\kappa_i(e)}^i$ from now on such that
$$\tau_i = (v_0^i,...,v_d^i)$$
is now an ordered simplex.

Now note that
$$\text{div}^{k+1}\Delta_d = \text{div}^k \text{div} \Delta_d.$$
Furthermore note that the subdivision procedure $\text{div}^k$ is itself inclusion-uniform and thus for every face $\tau_i = \{v_0,...,v_d\} \in \text{div}\Delta_d$ and bijection
$$f: \{v_0,...,v_d\} \rightarrow \{0,...,d\}$$
there exists a unique isomorphism
$$f^{(k)}: \text{div}^k_{\text{div}\Delta_d}\tau_i \rightarrow \text{div}^k\Delta_d,$$
such that $f^{(k)}(v_j) = f(v_j)$, $j = 0,...,d$.
In particular the dual graph of $\text{div}^k_{\text{div}\Delta_d}\tau_i$ is isomorphic to $\Gamma_{k-1}$ by induction hypothesis.
Note that the dual graph of $\text{div}^k_{\text{div}\Delta_d}\tau_i$ is the restriction of $\Gamma^{(d)}(\text{div}^{k+1}\Delta_d)$ to the set of facets added in the interior of $\tau_i$.

Thus we take $N$ copies of $\Gamma_{k-1}$ - one for each facet $\tau_i$. In order to obtain an isomorphism $\tilde{\Gamma}_k \cong \Gamma^{(d)}(\text{div}^{k+1}\Delta_d)$ we only need to show that the edges of second kind in the construction are indeed the edges obtained by gluing the copies of $\text{div}^k\Delta_d$ along their boundaries.

To this end assume two faces $\tau_i, \tau_j\in F_d(\text{div}\Delta_d)$ are given such that $e = \{i,j\}\in E(\Gamma_0)$. By definition they meet in the common face
$$\tau_i \setminus \{v_{\kappa_i(e)}^i\} = \tau_j\setminus\{v_{\kappa_j(e)}^j\}.$$

Note that the action of $\rho_{ij}$ induced on $\tau_i$ maps $v_{\kappa_i(e)}^i$ to $v_{\kappa_j(e)}^j$. $\rho_{ij}$ delivers even more; by definition the action of $\rho_{ij}$ on the vertices maps $v_{\kappa_i(e')}^i$ to $v_{\kappa_j(e'')}^j$ whenever the vertices opposed to $e'$ and $e''$ in $i$ and $j$, respectively, are geometrically identical (i.e. when they are identified in the gluing process). This can be seen by the $(d-2)$-adjacency of the edges $e'$ and $e''$ (or rather their generating faces) in the boundary of $\tau_i\cap\tau_j$.
In particular if we consider $\text{div}\Delta_d$ to be obtained by a gluing $\mathcal{G}_\ast(\sigma_1,...,\sigma_N)$ for $N$ copies of the standard simplex $\Delta_d$ with $\sigma_i$ corresponding to $\tau_i$ in the glued complex $\mathcal{G}_\ast(\sigma_1,...,\sigma_N) \cong \text{div}\Delta_d$. We will identify $\sigma_i$ with $\tau_i$ by the ordering fixed above; i.e. the canonical inclusion of the $i$-th standard simplex is given by
$$\iota_i: \ell \mapsto v^i_{\ell}.$$
Under this identification the restriction of $\rho_{ij}$ to $\{\kappa_i(e') ~|~ e'\in E(\Gamma_0), ~i\in e'\} \setminus \{\kappa_i(e)\}$ is thus precisely the map
$$\iota_j^{-1}\circ\iota_i$$
and so by definition of the $S_{d+1}$-action acts as its extended isomorphism
$$\widetilde{\iota_j^{-1}\circ \iota_i}$$
which by equation (\ref{eq:gluing_bij}) from Section \ref{sec:prel} gives exactly the vertex bijection of the relation $\mathcal{G}'$ for obtaining the subdivision as the glued complex
$$\mathcal{G}'_\ast(\text{div}^k\sigma_1,...,\text{div}^k\sigma_N)$$
which is isomorphic to the graph $\tilde{\Gamma}_k$ by how $\rho_{ij}$ identifies the boundaries.
\end{proof}

Note that this fractal process can be applied not only to $\Gamma_0$ being the dual graph of $\text{div}\Delta_d$. Having constructed this sequence of fractals for $\Gamma_0 = \Gamma^{(d)}(\text{div}\Delta_d)$. We can also define fractal sequences on any given pseudo-manifold $K$ in the same fashion by fixing the same maps $\kappa_i$ and $\rho_{ij}$ (which amounts to chosing an ordering for every simplex in $K$) and perform gluing along the boundary by utilizing the $S_{d+1}$-action on the already constructed sequence of subdivided standard simplices. The fractal sequence arising from this is the sequence of dual graphs of the iterated subdivisions of $K$.

In Figure \ref{fig:fractals} we have illustrated the input data for $\Gamma_0$, $\kappa$ and $\rho$ in order to generate the barycentric or edgewise subdivision (with parameter $3$) of a $2$-simplex respectively.

\begin{figure}[h]
    \begin{subfigure}[t]{.5\textwidth}
          \centering
          \tikzset{every picture/.style={line width=0.75pt}} 

\begin{tikzpicture}[x=0.75pt,y=0.75pt,yscale=-1,xscale=1]

\draw   (320,40) -- (460,260) -- (180,260) -- cycle ;
\draw    (320,40) -- (320,260) ;
\draw    (250,150) -- (460,260) ;
\draw    (390,150) -- (180,260) ;
\draw    (290,230) -- (260,180) ;
\draw    (290,230) -- (350,230) ;
\draw    (380,180) -- (350,230) ;
\draw    (290,130) -- (260,180) ;
\draw    (350,130) -- (380,180) ;
\draw    (290,130) -- (350,130) ;
\draw   (249,101) .. controls (258,89) and (273,111) .. (290,130) .. controls (267,121) and (240,113) .. (249,101) -- cycle ;
\draw   (217,153) .. controls (226,141) and (243,161) .. (260,180) .. controls (235,173) and (208,165) .. (217,153) -- cycle ;
\draw   (392,101) .. controls (398,113) and (377,120) .. (350,130) .. controls (369,112) and (385,91) .. (392,101) -- cycle ;
\draw   (423,153) .. controls (429,165) and (404,171) .. (380,180) .. controls (399,162) and (416,143) .. (423,153) -- cycle ;
\draw   (350,282) .. controls (338,281) and (344,246) .. (350,230) .. controls (356,245) and (365,282) .. (350,282) -- cycle ;
\draw   (290,282) .. controls (278,281) and (284,246) .. (290,230) .. controls (296,245) and (305,282) .. (290,282) -- cycle ;
\draw  [dash pattern={on 4.5pt off 4.5pt}]  (240,180) -- (279.01,248.26) ;
\draw [shift={(280,250)}, rotate = 240.26] [color={rgb, 255:red, 0; green, 0; blue, 0 }  ][line width=0.75]    (10.93,-3.29) .. controls (6.95,-1.4) and (3.31,-0.3) .. (0,0) .. controls (3.31,0.3) and (6.95,1.4) .. (10.93,3.29)   ;
\draw  [dash pattern={on 4.5pt off 4.5pt}]  (273,173) -- (299,218.27) ;
\draw [shift={(300,220)}, rotate = 240.12] [color={rgb, 255:red, 0; green, 0; blue, 0 }  ][line width=0.75]    (10.93,-3.29) .. controls (6.95,-1.4) and (3.31,-0.3) .. (0,0) .. controls (3.31,0.3) and (6.95,1.4) .. (10.93,3.29)   ;

\draw (315,22.4) node [anchor=north west][inner sep=0.75pt]    {$0$};
\draw (167,261.4) node [anchor=north west][inner sep=0.75pt]    {$1$};
\draw (462,261.4) node [anchor=north west][inner sep=0.75pt]    {$2$};
\draw (304,300.4) node [anchor=north west][inner sep=0.75pt]    {$\partial _{0} \Gamma _{0}$};
\draw (181,102.4) node [anchor=north west][inner sep=0.75pt]    {$\partial _{2} \Gamma _{0}$};
\draw (431,102.4) node [anchor=north west][inner sep=0.75pt]    {$\partial _{1} \Gamma _{0}$};
\draw (347,288.4) node [anchor=north west][inner sep=0.75pt]  [font=\scriptsize]  {$0$};
\draw (287,288.4) node [anchor=north west][inner sep=0.75pt]  [font=\scriptsize]  {$0$};
\draw (397,92.4) node [anchor=north west][inner sep=0.75pt]  [font=\scriptsize]  {$1$};
\draw (427,144.4) node [anchor=north west][inner sep=0.75pt]  [font=\scriptsize]  {$1$};
\draw (238,92.4) node [anchor=north west][inner sep=0.75pt]  [font=\scriptsize]  {$2$};
\draw (207,142.4) node [anchor=north west][inner sep=0.75pt]  [font=\scriptsize]  {$2$};
\draw (331,118.4) node [anchor=north west][inner sep=0.75pt]  [font=\scriptsize]  {$0$};
\draw (256,161.4) node [anchor=north west][inner sep=0.75pt]  [font=\scriptsize]  {$0$};
\draw (301,118.4) node [anchor=north west][inner sep=0.75pt]  [font=\scriptsize]  {$0$};
\draw (268,138.4) node [anchor=north west][inner sep=0.75pt]  [font=\scriptsize]  {$1$};
\draw (363,137.4) node [anchor=north west][inner sep=0.75pt]  [font=\scriptsize]  {$2$};
\draw (377,161.4) node [anchor=north west][inner sep=0.75pt]  [font=\scriptsize]  {$0$};
\draw (375,195.4) node [anchor=north west][inner sep=0.75pt]  [font=\scriptsize]  {$2$};
\draw (362,217.4) node [anchor=north west][inner sep=0.75pt]  [font=\scriptsize]  {$2$};
\draw (330,232.4) node [anchor=north west][inner sep=0.75pt]  [font=\scriptsize]  {$1$};
\draw (303,232.4) node [anchor=north west][inner sep=0.75pt]  [font=\scriptsize]  {$2$};
\draw (272,218.4) node [anchor=north west][inner sep=0.75pt]  [font=\scriptsize]  {$1$};
\draw (257,195.4) node [anchor=north west][inner sep=0.75pt]  [font=\scriptsize]  {$1$};

\end{tikzpicture}
          \caption{Barycentric subdivision $\text{div} = \text{sd}$ and $d = 2$.}
    \end{subfigure}
    \hfill
    \begin{subfigure}[t]{.5\textwidth}
          \centering
          \tikzset{every picture/.style={line width=0.75pt}} 

\begin{tikzpicture}[x=0.75pt,y=0.75pt,yscale=-1,xscale=1]

\draw   (340,30) -- (480,247) -- (200,247) -- cycle ;
\draw   (299,48) .. controls (308,36) and (323,58) .. (340,77) .. controls (317,68) and (290,60) .. (299,48) -- cycle ;
\draw   (251,120) .. controls (260,108) and (277,128) .. (294,147) .. controls (269,140) and (242,132) .. (251,120) -- cycle ;
\draw   (382,48) .. controls (388,60) and (367,67) .. (340,77) .. controls (359,59) and (375,38) .. (382,48) -- cycle ;
\draw   (430,120) .. controls (436,132) and (411,138) .. (387,147) .. controls (406,129) and (423,110) .. (430,120) -- cycle ;
\draw   (250,269) .. controls (238,268) and (244,233) .. (250,217) .. controls (256,232) and (265,269) .. (250,269) -- cycle ;
\draw   (436,269) .. controls (424,268) and (430,233) .. (436,217) .. controls (442,232) and (451,269) .. (436,269) -- cycle ;
\draw    (248,173) -- (290,247) ;
\draw    (248,173) -- (433,173) ;
\draw    (290,107) -- (390,107) ;
\draw    (290,107) -- (340.5,173) ;
\draw    (340.5,173) -- (390,107) ;
\draw    (340.5,173) -- (290,247) ;
\draw    (340.5,173) -- (390,247) ;
\draw    (433,173) -- (390,247) ;
\draw   (479,190) .. controls (485,202) and (460,208) .. (436,217) .. controls (455,199) and (472,180) .. (479,190) -- cycle ;
\draw   (207,190) .. controls (216,178) and (233,198) .. (250,217) .. controls (225,210) and (198,202) .. (207,190) -- cycle ;
\draw   (340,269) .. controls (328,268) and (334,233) .. (340,217) .. controls (346,232) and (355,269) .. (340,269) -- cycle ;
\draw    (250,217) -- (294,197) ;
\draw    (387,197) -- (436,217) ;
\draw    (294,197) -- (340,217) ;
\draw    (340,217) -- (387,197) ;
\draw    (387,147) -- (387,197) ;
\draw    (294,147) -- (294,197) ;
\draw    (294,147) -- (340,127) ;
\draw    (387,147) -- (340,127) ;
\draw    (340,77) -- (340,127) ;
\draw  [dash pattern={on 4.5pt off 4.5pt}]  (282,212) -- (328.13,229.3) ;
\draw [shift={(330,230)}, rotate = 200.56] [color={rgb, 255:red, 0; green, 0; blue, 0 }  ][line width=0.75]    (10.93,-3.29) .. controls (6.95,-1.4) and (3.31,-0.3) .. (0,0) .. controls (3.31,0.3) and (6.95,1.4) .. (10.93,3.29)   ;
\draw  [dash pattern={on 4.5pt off 4.5pt}]  (302,182) -- (348.13,199.3) ;
\draw [shift={(350,200)}, rotate = 200.56] [color={rgb, 255:red, 0; green, 0; blue, 0 }  ][line width=0.75]    (10.93,-3.29) .. controls (6.95,-1.4) and (3.31,-0.3) .. (0,0) .. controls (3.31,0.3) and (6.95,1.4) .. (10.93,3.29)   ;

\draw (335,12.4) node [anchor=north west][inner sep=0.75pt]    {$0$};
\draw (187,251.4) node [anchor=north west][inner sep=0.75pt]    {$1$};
\draw (482,251.4) node [anchor=north west][inner sep=0.75pt]    {$2$};
\draw (247,275.4) node [anchor=north west][inner sep=0.75pt]  [font=\scriptsize]  {$0$};
\draw (337,275.4) node [anchor=north west][inner sep=0.75pt]  [font=\scriptsize]  {$0$};
\draw (433,275.4) node [anchor=north west][inner sep=0.75pt]  [font=\scriptsize]  {$0$};
\draw (387,39.4) node [anchor=north west][inner sep=0.75pt]  [font=\scriptsize]  {$1$};
\draw (435,111.4) node [anchor=north west][inner sep=0.75pt]  [font=\scriptsize]  {$1$};
\draw (484,180.4) node [anchor=north west][inner sep=0.75pt]  [font=\scriptsize]  {$1$};
\draw (197,179.4) node [anchor=north west][inner sep=0.75pt]  [font=\scriptsize]  {$2$};
\draw (241,109.4) node [anchor=north west][inner sep=0.75pt]  [font=\scriptsize]  {$2$};
\draw (287,39.4) node [anchor=north west][inner sep=0.75pt]  [font=\scriptsize]  {$2$};
\draw (251,202.4) node [anchor=north west][inner sep=0.75pt]  [font=\scriptsize]  {$1$};
\draw (342,80.4) node [anchor=north west][inner sep=0.75pt]  [font=\scriptsize]  {$0$};
\draw (427,202.4) node [anchor=north west][inner sep=0.75pt]  [font=\scriptsize]  {$2$};
\draw (390,151.4) node [anchor=north west][inner sep=0.75pt]  [font=\scriptsize]  {$0$};
\draw (377,132.4) node [anchor=north west][inner sep=0.75pt]  [font=\scriptsize]  {$2$};
\draw (344,110.4) node [anchor=north west][inner sep=0.75pt]  [font=\scriptsize]  {$0$};
\draw (317,122.4) node [anchor=north west][inner sep=0.75pt]  [font=\scriptsize]  {$1$};
\draw (359,124.4) node [anchor=north west][inner sep=0.75pt]  [font=\scriptsize]  {$2$};
\draw (391,181.4) node [anchor=north west][inner sep=0.75pt]  [font=\scriptsize]  {$0$};
\draw (372,205.4) node [anchor=north west][inner sep=0.75pt]  [font=\scriptsize]  {$1$};
\draw (406,194.4) node [anchor=north west][inner sep=0.75pt]  [font=\scriptsize]  {$2$};
\draw (354,212.4) node [anchor=north west][inner sep=0.75pt]  [font=\scriptsize]  {$1$};
\draw (321,212.4) node [anchor=north west][inner sep=0.75pt]  [font=\scriptsize]  {$2$};
\draw (298,205.4) node [anchor=north west][inner sep=0.75pt]  [font=\scriptsize]  {$2$};
\draw (284,175.4) node [anchor=north west][inner sep=0.75pt]  [font=\scriptsize]  {$0$};
\draw (271,193.4) node [anchor=north west][inner sep=0.75pt]  [font=\scriptsize]  {$1$};
\draw (284,160.4) node [anchor=north west][inner sep=0.75pt]  [font=\scriptsize]  {$0$};
\draw (299,130.4) node [anchor=north west][inner sep=0.75pt]  [font=\scriptsize]  {$1$};
\draw (324,290.4) node [anchor=north west][inner sep=0.75pt]    {$\partial _{0} \Gamma _{0}$};
\draw (201,92.4) node [anchor=north west][inner sep=0.75pt]    {$\partial _{2} \Gamma _{0}$};
\draw (451,92.4) node [anchor=north west][inner sep=0.75pt]    {$\partial _{1} \Gamma _{0}$};

\end{tikzpicture}
          \caption{Edgewise subdivision for parameter $r = 3$ and $d = 2$.}
    \end{subfigure}
    
    \caption{Two subdivision procedure of infinite ramification and their respective $\Gamma_0$ with added loops. The dashed arrow indicates the map $\rho_{ij}$ for a particular edge $\{i,j\}$. The numbers along the edge indicate the values of $\kappa_i$ associated to the vertex of the edge closer to the label.}\label{fig:fractals}
\end{figure}
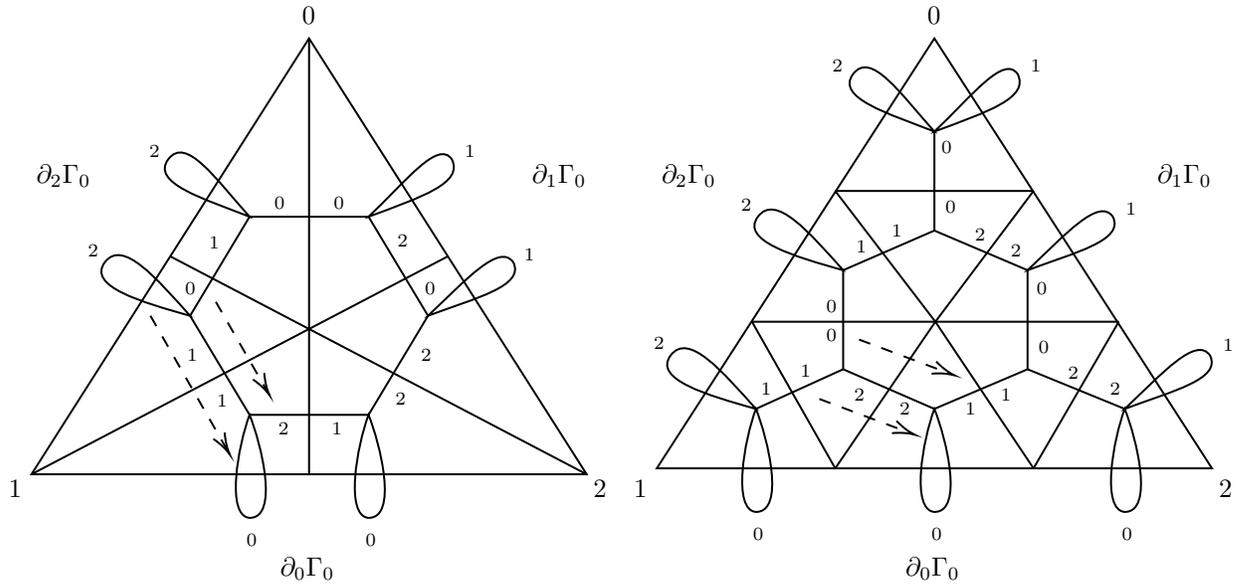

\textbf{In case of finite ramification:}
The generic case of finite ramification is the case where $n = 1$. This is due to the fact that we call a self-similar set construction of the above type \textit{finitely ramified} if every copy of $\Gamma_{k-1}$ in $\Gamma_k$ can be isolated by the removal of a bounded number of edges (independent of $k$). However the boundaries to be joined have $n^{k+1}$ elements which is only bounded by a constant if $n = 1$.

In this case the above construction reduces to a construction related to a graph sequence approximating a self-similar set in the sense of Sabot, \cite{sabot}. We assume $\Gamma_0$ to be equipped with the enumeration $\kappa_i$ of edges at every vertex $i\in V(\Gamma_0) = [N]$ and view $\Gamma_0$ as generated by a relation $\mathcal{R}$ on the set $[d+1]\times [N]$, i.e. $\mathcal{R}$ is generated by the set of relations
$$(\kappa_i(\{i,j\}), i) \mathcal{R} (\kappa_j(\{i,j\}), j)$$
for every $\{i,j\}\in E(\Gamma_0)$.

Since $n = 1$ we can identify $\partial_i\Gamma_{k-1}$ with $\partial_i\Gamma_0$ and thus push the equivalence relation $\mathcal{R}$ from $\Gamma_0$ to the $k$-th level. Let $\Gamma_{k-1}^i$ denote the $i$-th copy of $\Gamma_{k-1}$ in $\Gamma_k$. We then join the vertex in $\partial_r \Gamma_{k-1}^i$ with the vertex in $\partial_\ell\Gamma_{k-1}^j$ iff
$$(r,i)\mathcal{R} (\ell, j),$$
i.e. iff $r = \kappa_i(\{i,j\})$, $\ell = \kappa_j(\{i,j\})$ and $\{i,j\}\in E(\Gamma_0)$.
Note that $\rho_{ij}$ does not play a role here since the restriction of the isomorphism induced by $\rho_{ij}$ over the $S_{d+1}$-action on $\partial_{\kappa_i(\{i,j\})}\Gamma_{k-1}$ then just maps this singleton onto the singleton $\partial_{\kappa_j(\{i,j\})}\Gamma_{k-1}$.

The above construction can then be transferred to the setting of Sabot by taking the line graph and adjusting the elementary cell $\Gamma_0$ accordingly. By \cite{spectral_effects} the spectral effects of taking the line graph is known in case the graph is regular.

\bibliographystyle{abbrvurl}
\bibliography{references}

\begin{thebibliography}{10}

\bibitem{athan}
C.~A. Athanasiadis.
\newblock A survey of subdivisions and local $h$-vectors.
\newblock In {\em The mathematical legacy of Richard P. Stanley}, pages 39--51.
  Amer. Math. Soc., Providence, 2016.

\bibitem{spectral_effects}
S.~Barik, D.~Kalita, S.~Pati, and G.~Sahoo.
\newblock Spectra of graphs resulting from various graph operations and
  products: A survey.
\newblock {\em Special Matrices}, 6:323--342, 2018.
\newblock \href {https://doi.org/http://dx.doi.org/10.1515/spma-2018-0027}
  {\path{doi:http://dx.doi.org/10.1515/spma-2018-0027}}.

\bibitem{grigorchuk2}
L.~Bartholdi and R.~Grigorchuk.
\newblock On the spectrum of hecke type operators related to some fractal
  groups.
\newblock {\em Tr. Mat. Inst. Steklova}, 231, 1999.

\bibitem{welker}
F.~Brenti and V.~Welker.
\newblock $f$-vectors of barycentric subdivisions.
\newblock {\em Mathematische Zeitschrift}, 259:849--865, 2008.
\newblock \href {https://doi.org/http://dx.doi.org/10.1007/s00209-007-0251-z}
  {\path{doi:http://dx.doi.org/10.1007/s00209-007-0251-z}}.

\bibitem{cvetkovic_subdiv_graph}
D.~Cvetković.
\newblock Spectra of graphs formed by some unary operations.
\newblock {\em Publications de l'Institut Mathématique}, 19:37--41, 1975.

\bibitem{grigorchuk4}
N.-B. Dang, R.~Grigorchuk, and M.~Lyubich.
\newblock Self-similar groups and holomorphic dynamics: Renormalization,
  integrability, and spectrum, 2020.
\newblock arXiv:2010.00675.
\newblock \href {https://doi.org/https://doi.org/10.48550/arXiv.2010.00675}
  {\path{doi:https://doi.org/10.48550/arXiv.2010.00675}}.

\bibitem{friedland}
S.~Friedland and L.~Lim.
\newblock Nuclear norm of higher-order tensors.
\newblock {\em Mathematics of Computation}, 87:1255--1281, 2018.
\newblock \href {https://doi.org/http://dx.doi.org/10.1090/mcom/3239}
  {\path{doi:http://dx.doi.org/10.1090/mcom/3239}}.

\bibitem{glt}
C.~Garoni and S.~Serra-Capizzano.
\newblock {\em Generalized Locally Toeplitz Sequences : Theory and
  Applications}, volume~I.
\newblock Springer, Cham, 2017.
\newblock \href {https://doi.org/http://dx.doi.org/10.1007/978-3-319-53679-8}
  {\path{doi:http://dx.doi.org/10.1007/978-3-319-53679-8}}.

\bibitem{goldberg2002combinatorial}
T.~E. Goldberg.
\newblock Combinatorial laplacians of simplicial complexes.
\newblock {\em Senior Thesis, Bard College}, 2002.

\bibitem{grigorchuk3}
R.~Grigorchuk, V.~Nekrashevych, and Z.~{\v{S}}uni{\'{c}}.
\newblock From self-similar groups to self-similar sets and spectra.
\newblock In C.~Bandt, K.~Falconer, and M.~Z{\"a}hle, editors, {\em Fractal
  Geometry and Stochastics V}, pages 175--207, Cham, 2015. Springer
  International Publishing.
\newblock \href
  {https://doi.org/http://dx.doi.org/10.1007/978-3-319-18660-3_11}
  {\path{doi:http://dx.doi.org/10.1007/978-3-319-18660-3_11}}.

\bibitem{grigorchuk}
R.~Grigorchuk and Z.~Sunik.
\newblock Asymptotic aspects of schreier graphs and hanoi towers groups.
\newblock {\em Comptes Rendus Mathematique}, 342:545--550, 2006.
\newblock \href {https://doi.org/http://dx.doi.org/10.1016/j.crma.2006.02.001}
  {\path{doi:http://dx.doi.org/10.1016/j.crma.2006.02.001}}.

\bibitem{woodbury}
W.~W. Hager.
\newblock Updating the inverse of a matrix.
\newblock {\em SIAM Review}, 31:221--239, 1989.
\newblock \href {https://doi.org/http://dx.doi.org/10.1137/1031049}
  {\path{doi:http://dx.doi.org/10.1137/1031049}}.

\bibitem{MDL}
D.~A. Harville.
\newblock {\em Matrix Algebra From a Statistician's Perspective}.
\newblock Springer-Verlag, New York, 1997.
\newblock \href {https://doi.org/http://dx.doi.org/10.1007/b98818}
  {\path{doi:http://dx.doi.org/10.1007/b98818}}.

\bibitem{horak}
D.~Horak and J.~Jost.
\newblock Spectra of combinatorial laplace operators on simplicial complexes.
\newblock {\em Advances in Mathematics}, 244:303--336, 2013.
\newblock \href {https://doi.org/http://dx.doi.org/10.1016/j.aim.2013.05.007}
  {\path{doi:http://dx.doi.org/10.1016/j.aim.2013.05.007}}.

\bibitem{jordan}
J.~Jordan.
\newblock The spectra of the laplacians of fractal graphs not satisfying
  spectral decimation.
\newblock {\em Proceedings of The Edinburgh Mathematical Society}, 53:731--746,
  2010.
\newblock \href {https://doi.org/http://dx.doi.org/10.1017/S0013091508000898}
  {\path{doi:http://dx.doi.org/10.1017/S0013091508000898}}.

\bibitem{knill}
O.~Knill.
\newblock Universality for barycentric subdivision, 2015.
\newblock arXiv:1509.06092.
\newblock \href {https://doi.org/https://doi.org/10.48550/arXiv.1509.06092}
  {\path{doi:https://doi.org/10.48550/arXiv.1509.06092}}.

\bibitem{wielandt}
C.-K. Li and R.~Mathias.
\newblock The lidskii-mirsky-wielandt theorem – additive and multiplicative
  versions.
\newblock {\em Numerische Mathematik}, 81:377--413, 1999.
\newblock \href {https://doi.org/http://dx.doi.org/10.1007/s002110050397}
  {\path{doi:http://dx.doi.org/10.1007/s002110050397}}.

\bibitem{munkres}
J.~R. Munkres.
\newblock {\em Elements of Algebraic Topology}.
\newblock CRC Press, Boca Raton, 2018.
\newblock \href {https://doi.org/http://dx.doi.org/10.1201/9780429493911}
  {\path{doi:http://dx.doi.org/10.1201/9780429493911}}.

\bibitem{sabot}
C.~Sabot.
\newblock Spectral properties of self-similar lattices and iteration of
  rational maps.
\newblock {\em Mémoires de la Société mathématique de France}, 2003.
\newblock \href {https://doi.org/http://dx.doi.org/10.24033/msmf.405}
  {\path{doi:http://dx.doi.org/10.24033/msmf.405}}.

\bibitem{sherman_morrison}
J.~Sherman and W.~J. Morrison.
\newblock {Adjustment of an Inverse Matrix Corresponding to a Change in One
  Element of a Given Matrix}.
\newblock {\em The Annals of Mathematical Statistics}, 21:124--127, 1950.
\newblock \href {https://doi.org/http://dx.doi.org/10.1214/aoms/1177729893}
  {\path{doi:http://dx.doi.org/10.1214/aoms/1177729893}}.

\bibitem{silvester}
J.~R. Silvester.
\newblock Determinants of block matrices.
\newblock {\em The Mathematical Gazette}, 84:460 -- 467, 2000.
\newblock \href {https://doi.org/http://dx.doi.org/10.2307/3620776}
  {\path{doi:http://dx.doi.org/10.2307/3620776}}.

\bibitem{stanley_subdiv}
R.~P. Stanley.
\newblock Subdivisions and local $h$-vectors.
\newblock {\em Journal of the American Mathematical Society}, 5:805--851, 1992.
\newblock \href {https://doi.org/http://dx.doi.org/10.2307/2152711}
  {\path{doi:http://dx.doi.org/10.2307/2152711}}.

\end{thebibliography}

\end{document}